\newtheorem{theorem}{Theorem}[section]
\newtheorem{lemma}[theorem]{Lemma}
\newtheorem{proposition}[theorem]{Proposition}
\newtheorem{condition}{Condition}[section]
\numberwithin{equation}{section}
\def\B{\text{\large$\mathtt{B}$}}
\def\im{\mathrm{i}}
\def\d{\mathrm{d}}
\def\e{\mathrm{e}}
\def\sgn{\operatorname{sgn}}
\def\Re{\operatorname{Re}}
\def\Im{\operatorname{Im}}
\definecolor{customgreen}{rgb}{0.0, 0.5, 0.0}
\author[G. Nemes]{Gerg\H{o} Nemes}
\address{Department of Physics, Tokyo Metropolitan University, 1--1 Minami-osawa, Hachioji-shi, Tokyo, Japan 192-0397}
\address{Theoretical Sciences Visiting Program (TSVP), Okinawa Institute of Science and Technology Graduate University, 1919--1 Tancha, Onna-son, Kunigami-gun Okinawa, Japan 904-0495\bigskip}
\email{nemes@tmu.ac.jp}
\keywords{asymptotic expansions, Borel summability, formal solutions, factorial series}
\subjclass[2020]{34E05, 34E20, 34M25}
\begin{document}

\title[Borel summability of formal solutions]{On the Borel summability of formal solutions of certain higher-order linear ordinary differential equations}

\begin{abstract} We consider a class of $n^{\text{th}}$-order linear ordinary differential equations with a large parameter $u$. Analytic solutions of these equations can be described by (divergent) formal series in descending powers of $u$. We demonstrate that, given mild conditions on the potential functions of the equation, the formal solutions are Borel summable with respect to the parameter $u$ in large, unbounded domains of the independent variable. We establish that the formal series expansions serve as asymptotic expansions, uniform with respect to the independent variable, for the Borel re-summed exact solutions. Additionally, we show that the exact solutions can be expressed using factorial series in the parameter, and these expansions converge in half-planes, uniformly with respect to the independent variable. To illustrate our theory, we apply it to an $n^{\text{th}}$-order Airy-type equation.
\end{abstract}
\maketitle

\section{Introduction and main results}\label{section1}

In this paper, we study $n^{\text{th}}$-order ($n\ge 2$) linear ordinary differential equations of the form
\begin{equation}\label{eq1}
\bigg(  - \frac{\d^n }{\d z^n } + \sum\limits_{k = 0}^{n - 2} u^{n - k} f_k (u,z)\frac{\d^k }{\d z^k } \bigg)w(u,z) = 0,
\end{equation}
where $u$ is a real or complex parameter, $z$ lies in some domain $\mathbf{D}$, bounded or otherwise, of a Riemann surface, and the potential functions $f_k(u,z)$ are analytic functions of $z$ possessing asymptotic expansions of the form
\begin{equation}\label{eq23}
f_0 (u,z) \sim \sum\limits_{m = 0}^\infty  \frac{f_{0,m} (z)}{u^m } ,\quad f_k (u,z) \sim \sum\limits_{m = 1}^\infty  \frac{f_{k,m} (z)}{u^m } \qquad (1 \le k \le n - 2)
\end{equation}
as $u\to \infty$ uniformly with respect to $z\in \mathbf{D}$. It is assumed that the coefficient functions $f_{k,m}(z)$ are analytic functions of $z$ in the domain $\mathbf{D}$ and are independent of $u$. Furthermore, we suppose that $f_{0,0}(z)$ does not vanish in $\mathbf{D}$. Note that the term in the $(n-1)^{\text{th}}$ derivative is not present in the equation \eqref{eq1}. This term can always be removed by an appropriate change of dependent variable.

We are interested in asymptotic expansions of analytic solutions of \eqref{eq1} when the parameter $u$ becomes large. Equation \eqref{eq1} belongs to a commonly studied class of linear ordinary differential equations with a large parameter (see, e.g., \cite[Ch. 5, Sec. 1.4]{Fedoryuk1993}). In general, not all of the terms corresponding to $m=0$ in the expansions of the functions $f_k(u,z)$ (for $k\ne 0$) need to be zero. However, in those instances, the roots of the characteristic equation $\xi ^n  = f_{n - 2,0} (z)\xi ^{n - 2} + \ldots + f_{0,0} (z)$ associated with \eqref{eq1} may not possess a closed form, thus introducing difficulties in the analysis. In our case, the characteristic equation takes the simple form
\begin{equation}\label{char}
\xi ^n  = f_{0,0} (z).
\end{equation}
After choosing a specific branch of $f_{0,0}^{1/n} (z)$, we obtain $n$ unique solutions $\xi_j = \e^{2\pi \im j/n} f_{0,0}^{1/n} (z)$, where $0\le j\le n-1$. This suggests the introduction of the transformed variables $\xi$ and $W(u,\xi)$ given by
\begin{equation}\label{eq2}
\xi  = \int_{z_0 }^z f_{0,0}^{1/n} (t)\d t ,\quad W(u,\xi) = f_{0,0}^{(1-1/n)/2} (z)w(u,z).
\end{equation}
The integration path, with the potential exception of its endpoint $z_0$, must lie entirely within $\mathbf{D}$. If $z_0$ happens to be a boundary point of $\mathbf{D}$, it is assumed that the integral converges as $t$ approaches $z_0$ along the integration contour. The inclusion of the factor $f_{0,0}^{(1/n - 1)/2}(z)$ in the definition of $W(u,\xi)$ results in the $(n-1)^{\text{th}}$ derivative remaining absent in the transformed equation, thereby providing a simple asymptotic ansatz for solutions. The transformation \eqref{eq2} maps $\mathbf{D}$ on a domain $\mathbf{G}$, say. In terms of the new variables, equation 
\eqref{eq1} becomes
\begin{equation}\label{eq3}
\bigg(  - \frac{\d^n }{\d\xi ^n } + \sum\limits_{k = 0}^{n - 2} u^{n - k} \psi _k (u,\xi )\frac{\d^k}{\d\xi ^k } \bigg)W(u,\xi )=0,
\end{equation}
where the functions $\psi_k(u,\xi)$ are analytic with respect to $\xi$ in $\mathbf{G}$, and admit asymptotic expansions of the form
\begin{equation}\label{eq15}
\psi _k (u,\xi ) \sim \delta_{0,k}+\sum\limits_{m = 1}^\infty  {\frac{{\psi _{k,m} (\xi )}}{{u^m }}} \qquad (0 \le k \le n - 2),
\end{equation}
as $u\to \infty$ uniformly with respect to $\xi\in \mathbf{G}$. Here $\delta_{0,k}$ represents the Kronecker delta. The functions $\psi _k (u,\xi )$ and $\psi _{k,m} (\xi )$ may be expressed in terms of the functions $f _k (u,z )$ and $f_{k,m} (z)$, respectively (see Appendix \ref{appA}).

It can be verified through direct substitution that equation \eqref{eq3} has, for each $0\le j\le n-1$, formal solutions of the form
\begin{equation}\label{eq4}
\widehat{W}_j (u,\xi ) = \exp \Big( \e^{2\pi \im j/n} u\xi  + X_j (\xi ) \Big)\bigg( 1 + \sum\limits_{m = 1}^\infty  \frac{\mathsf{A}_{j,m} (\xi )}{u^m } \bigg),
\end{equation}
with
\begin{equation}\label{eq5}
X_j (\xi ) = \frac{1}{n}\int^\xi  \bigg( \sum\limits_{k = 0}^{n - 2} \e^{2\pi \im j(k + 1)/n} \psi _{k,1} (t) \bigg)\d t .
\end{equation}
The coefficients $\mathsf{A}_{j,m} (\xi )$, which are analytic functions of $\xi$ in $\mathbf{G}$, can be determined
recursively via
\begin{gather}\label{eq6}
\begin{split}
 & \mathsf{A}_{j,m} (\xi ) =  - \frac{1}{n}\sum\limits_{p = 2}^{\min(n,m + 1)} {\binom{n}{p}\e^{2\pi \im j(n - p + 1)/n} \sum\limits_{r = 0}^{p} {\binom{p}{r}\int^\xi  {\B_{p - r} \frac{{\d^r \mathsf{A}_{j,m - p + 1} (t)}}{{\d t^r }}\d t} } } 
\\ & + \frac{1}{n}\sum\limits_{k = 0}^{n - 2} {\sum\limits_{p = 1}^{\min(m,k)} {\binom{k}{p}\e^{2\pi \im j(k - p + 1)/n} \sum\limits_{r = 0}^{p} {\binom{p}{r}\int^\xi  {\B_{p - r} \psi _{k,1} (t)\frac{{\d^r \mathsf{A}_{j,m - p} (t)}}{{\d t^r }}\d t} } } } 
\\ & + \frac{1}{n}\sum\limits_{k = 0}^{n - 2} {\sum\limits_{q = 0}^{m - 1} {\sum\limits_{p = 0}^{\min(k,q)} {\binom{k}{p}\e^{2\pi \im j(k - p + 1)/n} \sum\limits_{r = 0}^{p} {\binom{p}{r}\int^\xi  {\B_{p - r} \psi _{k,m - q + 1} (t)\frac{{\d^r \mathsf{A}_{j,q - p} (t)}}{{\d t^r }}\d t} } } } } ,
\end{split}
\end{gather}
with the convention $\mathsf{A}_{j,0} (\xi ) = 1$. The constants of integration in \eqref{eq5} and \eqref{eq6} are arbitrary. In \eqref{eq6}, $\B_p  = \B_p  ( X'_j (\xi ),X''_j (\xi ), \ldots ,X_j^{(p)} (\xi ) ) =\exp(-X_j (\xi )) \partial_\xi^p\exp(X_j (\xi ))$ is a $p^{\text{th}}$ complete exponential Bell polynomial (refer to Appendix \ref{Bell}). For a detailed derivation, see Appendix \ref{coeffappendix}.

When $n=2$, equation \eqref{eq1} simplifies to the Schr\"odinger-type equation
\begin{equation}\label{eq7}
\bigg(  - \frac{\d^2 }{\d z^2 } +  u^2 f_0 (u,z)\bigg)w(u,z) = 0.
\end{equation}
In this particular case, the formal solutions \eqref{eq4} are commonly known as WKB solutions, named after the physicists Wentzel \cite{Wentzel1926}, Kramers \cite{Kramers1926}, and Brillouin \cite{Brillouin1926}, who independently discovered them within the context of quantum mechanics in 1926. However, for complete accuracy, it should be noted that the WKB solutions were discussed earlier by Carlini \cite{Carlini1817} in 1817, by Liouville \cite{Liouville1837} and Green \cite{Green1837} in 1837, by Strutt (Lord Rayleigh) \cite{Rayleigh1912} in 1912, and once again by Jeffreys \cite{Jeffreys1924} in 1924. For detailed historical discussion and critical overview, the interested reader is directed to the books by Dingle \cite[Ch. XIII]{Dingle1973} and Wasow \cite[Ch. I]{Wasow1985}.

Typically, the series in \eqref{eq4} do not converge, and the most that can be established is that in certain subregions of $\mathbf{G}$ they provide uniform asymptotic expansions of solutions of the differential equation \eqref{eq3}. For the Schrödinger-type equation \eqref{eq7}, Thorne \cite{Thorne1960} and Olver \cite[Ch. 10, \S9]{Olver1997} demonstrated the asymptotic nature of the expansions \eqref{eq4} by constructing explicit error bounds. Fedoryuk \cite[Ch. 5]{Fedoryuk1993} employed a similar technique to analyse the higher-order case.

In this paper we take a different approach and study the formal solutions $\widehat{W}_j (u,\xi)$ from the perspective of Borel summability. Our primary objective is to demonstrate that the expansions \eqref{eq4} are Borel summable within specific subdomains $\Gamma_j$ of $\mathbf{G}$, under the condition that the functions $\psi_k(u,\xi)$ meet certain moderate criteria. That is, we shall construct exact, analytic solutions $W_j(u, \xi)$ of the differential equation \eqref{eq3} in terms of Laplace transforms (with respect to the parameter $u$) of certain associated functions, known as the Borel transforms of the formal solutions. The formal expansions presented in \eqref{eq4} will then arise as asymptotic series of these exact solutions. Furthermore, it will be demonstrated that the exact solutions $W_j (u,\xi)$ can be represented through factorial series in the parameter $u$, and that these expansions exhibit convergence in half-planes with respect to $u$, uniformly in the independent variable $\xi$.

In recent decades, there has been a growing interest in the study of Borel summability of asymptotic expansions, and this field is closely connected to exponential asymptotics (see, for instance, \cite{OldeDaalhuis2003}). Indeed, extensive progress has been made in the summability of formal WKB solutions of the second-order equation \eqref{eq7}, with comparatively fewer general results available in the case of higher-order equations. The crucial relationship between Borel summability and the analysis of WKB solutions was initially recognised by Bender and Wu \cite{Bender1969}. Dunster et al. \cite{Dunster1993} studied differential equations of the type \eqref{eq7} in the case that $f_0(u,z)=f_{0,0}(z)+f_{0,2}(z)u^{-2}$. They established the Borel summability of WKB solutions away from Stokes curves under appropriate conditions on $f_0(u,z)$, and furthermore, they derived convergent factorial series expansions for these solutions. Further contributions in this direction can be found in the works of Giller and Milczarski \cite{Giller2001} as well as Bodine and Sch\"afke \cite{Bodine2002}. Borel summability results for the WKB solutions of equations of the type \eqref{eq7}, featuring more general potentials $f_0(u,z)$, have been demonstrated in their most recent works by Nikolaev \cite{Nikolaev2023} and the author of the present work \cite{Nemes2021}.

Simultaneously, efforts were made to develop connection formulae for WKB solutions across Stokes curves. In the seminal work by Voros \cite{Voros1983}, he showed how to analyse Borel re-summed WKB solutions and demonstrated the relationship between the singular points of the Borel transforms and the global connection formulae. Aoki et al. \cite{Aoki1991} proposed an approach to perform this singularity analysis by transforming the original equation into a canonical form near the transition point, employing specific transformation series, and investigating the properties of the resulting canonical equation, which are better understood. The idea of a transformation series, in fact, has much earlier roots and can be traced back to a paper by Cherry \cite{Cherry1950}. A rigorous treatment of these transformation series for specific types of transition points has been provided in the works of Kamimoto and Koike \cite{Kamimoto2011}, as well as by Sasaki \cite{Sasaki}. An alternative method for obtaining connection formulae, using convergent factorial series, was proposed by Dunster \cite{Dunster2001,Dunster2004}. Further developments, drawing upon \'Ecalle's theory of resurgent functions, were undertaken by Delabaere et al. \cite{Delabaere1997,Delabaere1999}.

As mentioned above, the study of Borel summability of formal solutions of higher-order equations has remained largely unexplored. Aoki et al. \cite{Aoki2001} introduced the exact steepest descent method to analyse the Borel summability of formal solutions to equation \eqref{eq1} in the particular case where the potential functions $f_k(u, z)$ are polynomials in $z$ of degree at most two and independent of $u$. Possible extension to higher-order polynomial potentials relies on the Borel summability of formal solutions to certain higher-order equations.

In 1982, Berk et al. \cite{Berk1982} studied the connection formulae across Stokes curves for formal solutions of a third-order equation given by
\[
\bigg(-\frac{\d^3}{\d z^3}-u^2 3\frac{\d}{\d z}+u^3\im z\bigg)w(u,z)=0.
\]
(Note that this equation is not within the scope of the class studied in this paper since the potential $f_1(u,z)=3$ does not exhibit an asymptotic expansion of the form specified in \eqref{eq23}.) They discovered that the Stokes curves intersected each other at non-critical points, requiring the introduction of new Stokes curves in the monodromy of solutions. These additional Stokes curves originated from what appeared to be regular points rather than transition points, later referred to as virtual turning points by Aoki et al. \cite{Aoki1994}. Notably, it was observed that the newly introduced Stokes curves only became ``active" once they passed through the crossing points of the ordinary Stokes curves. No Stokes phenomenon occurred along the segments connecting the virtual turning point and the crossing points. The explanation of this remarkable phenomenon, now known as the higher-order Stokes phenomenon, was subsequently provided by Howls et al. \cite{Howls2004,Howls2007}, employing hyperasymptotic techniques. In a higher-order Stokes phenomenon, a Stokes multiplier itself can change its value, explaining the apparent sudden birth of new Stokes curves at regular points (virtual turning points). This phenomenon was found to be present also in the case of integrals with saddles, inhomogeneous equations and partial differential equations. Nevertheless, these new Stokes curves are absent in our analysis and hence do not affect the results presented in this paper. This absence could likely be attributed to either the simplicity of the characteristic equation \eqref{char} associated with \eqref{eq1} or the restrictions on the domains $\Gamma_j$ where Borel summability is established.

Before presenting our results, we will introduce the necessary assumptions, notation, and definitions. For any $n\geq 2$, we will denote by $\mathfrak{a}_n$ the unique positive real solution of the equation
\[
(1 + \mathfrak{a}_n)^n  = 1 + 2n\mathfrak{a}_n.
\] The sequence $\mathfrak{a}_n$ will naturally arise when describing the region where the Borel transforms are analytic, particularly when estimating the coefficients $\mathsf{A}_{j,m}(\xi)$. For basic properties of this sequence, the reader is referred to Appendix \ref{aseq}. We define, for any $r>0$,
\[
B(r) = \left\{ t:\left| t \right| < r \right\} \quad \text{and} \quad U(r) = \left\{ t:\Re(t) > 0,\,  |\Im(t)| < r \right\} \cup B(r).
\]
Thus, $U(r)$ comprises all points whose distance from the positive real axis is strictly less than $r$. Subsequently, we will assume that the asymptotic expansions \eqref{eq15} of the functions $\psi_k(u,\xi)$ are Borel summable, and that their respective Borel transforms satisfy certain growth conditions. The precise conditions are as follows.

\begin{condition}\label{cond} There is a non-empty domain $\Delta \subseteq \mathbf{G}$ and a set of functions $\Psi_k(t,\xi)$, where $0\leq k\leq n-2$, that satisfy the following three conditions.

\begin{enumerate}[(i)]\itemsep0.5em
\item The functions $\Psi_k(t,\xi)$ are analytic on $U( \mathfrak{a}_nd')\times \Delta$, where $d'$ is a positive constant independent of $u$, $\xi$ and $k$.
\item There exist constants $c>0$, $\rho>0$, and $\sigma' \ge 0$, which are independent of $u$, $\xi$ and $k$, such that for all $(t,\xi) \in U(\mathfrak{a}_n d')\times \Delta$, the following inequalities hold: 
\[
|\Psi_k(t,\xi)| \le c\frac{\e^{\sigma' \Re(t)}}{1 + \left| \xi  \right|^{1 + \rho }}\qquad (0 \le k \le n - 2).
\]
\item It holds that
\begin{equation}\label{eq8}
 \psi_k (u,\xi ) = \delta_{0,k}+\int_0^{ + \infty }\e^{ - u t} \Psi_k(t,\xi )\d t \qquad (0 \le k \le n - 2),
\end{equation}
when $\Re(u) > \sigma'$ and $\xi \in \Delta$.
\end{enumerate}
\end{condition}
We shall introduce some additional definitions as follows. Write
\[
\theta _{j,\ell}  =  - \arg (\e^{2\pi \im \ell/n}  - \e^{2\pi \im j/n} ), \qquad 0\le \ell \le n-1,\quad \ell\neq j.
\]
Although these angles are initially defined modulo $2\pi$, we can always choose them in such a way that the following condition holds for each $0\leq j\leq n-1$:
\begin{equation}\label{theta1}
\max_{\ell\neq j} \theta _{j,\ell}  - \min_{\ell\neq j} \theta _{j,\ell}  = \frac{n - 2}{n}\pi .
\end{equation}
Henceforth, we will assume that this condition is met. It is worth noting that
\[
\theta _{j,\ell}  =\frac{{(2-\sgn (j - \ell))n - 2(j + \ell)}}{{2n}}\pi \bmod 2\pi.
\]
Let $d>0$. We denote, for each $0\le j\le n-1$, by $\Gamma_j(d)$ any (non-empty) subdomain of $\Delta$ that satisfies the following
two requirements:
\begin{enumerate}[(i)]
    \item The distance between each point of $\Gamma_j(d)$ and each boundary point of $\Delta$ has lower bound $d$ (which is to be chosen independently of $u$).
\item If $\xi \in \Gamma_j(d)$, then for each $0\le \ell\le n-1$, $\ell\ne j$, the infinite path $\mathscr{P}_{j,\ell}(\xi)$ parametrised by
\[
[0,+\infty) \ni t \mapsto \xi+t\e^{\im \theta _{j,\ell}}
\]
lies wholly within $\Gamma_j(d)$.
\end{enumerate}
Note that condition (ii) is equivalent to the requirement that if $\xi \in \Gamma_j(d)$, then $\Gamma_j(d)$ contains the closed, infinite sector with vertex $\xi$, opening $\frac{n - 2}{n}\pi$, and radii parametrised by
\[
[0,+\infty) \ni t \mapsto \xi+t\exp\Big(\im \min_{\ell\neq j} \theta _{j,\ell}\Big) \quad \text{and} \quad[0,+\infty) \ni t \mapsto \xi+t\exp\Big(\im \max_{\ell\neq j} \theta _{j,\ell}\Big)
\]
(if $n=2$, the sector degenerates into a half line); cf. Figure \ref{Figure1a}. Furthermore, condition (ii) implies that $\Delta$ (and hence $\mathbf{G}$) must be unbounded.

\begin{figure}[t]
\begin{subfigure}[c]{0.44\textwidth}
\centering
\includegraphics[width=\textwidth]{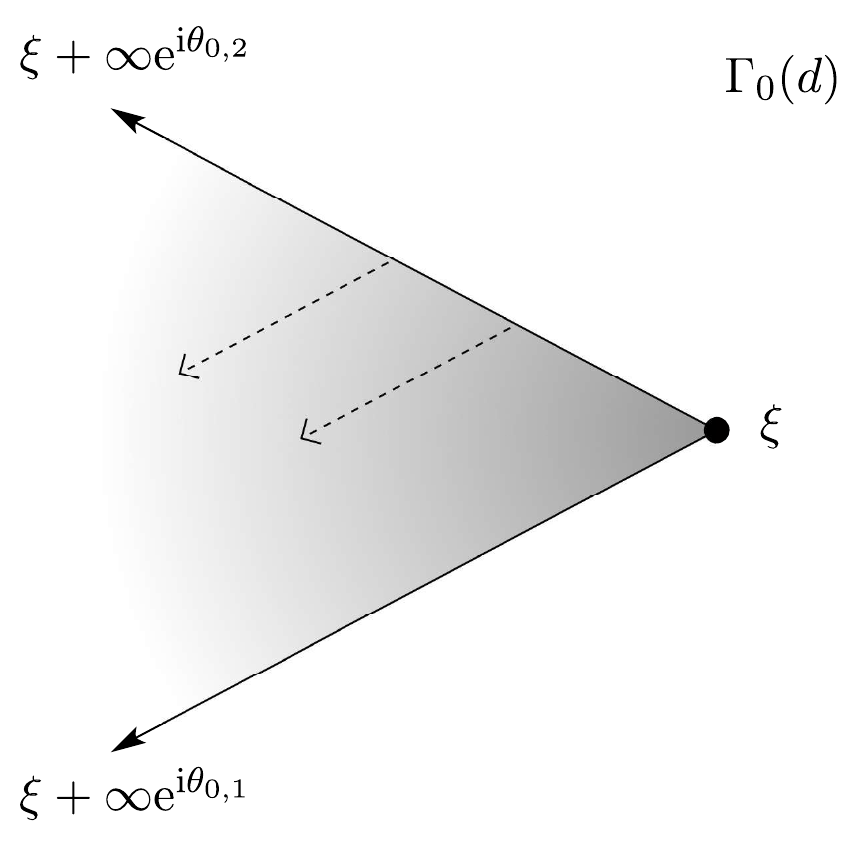}
\caption{}
\label{Figure1a}
\end{subfigure}
\hfill
\begin{subfigure}[c]{0.4675\textwidth}
\centering 
\includegraphics[width=\textwidth]{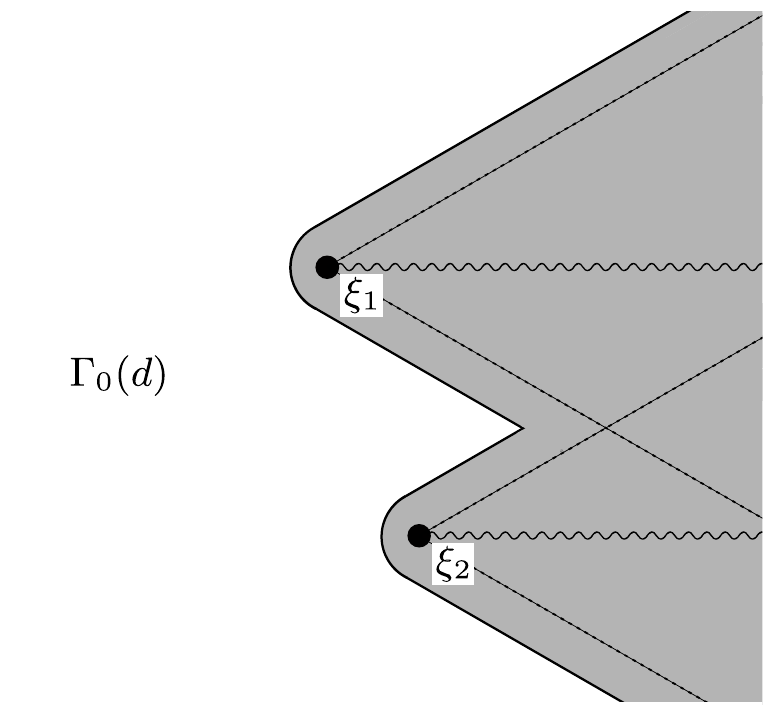}
\caption{}
\label{Figure1b}
\end{subfigure}
\caption{(a) The structure of $\Gamma_0(d)$ when $n = 3$. If $\xi \in \Gamma_0(d)$, then $\Gamma_0(d)$ must necessarily contain a sector with opening $\frac{\pi}{3}$ and vertex $\xi$. (b) Illustration of a domain $\Gamma_0(d)$ (unshaded) when $n = 3$. The points $\xi_1$ and $\xi_2$ are singularities of the functions $\psi_0(u, \xi)$ and $\psi_1(u, \xi)$. Zig-zag lines represent branch cuts, while dashed lines depict Stokes lines. The distance between Stokes lines and the boundary of $\Gamma_0(d)$ is at least $d$.}
\label{Figure1}
\end{figure}

For each $0\leq j\leq n-1$, we define the angle $\theta_j$ as follows:
\begin{equation}\label{theta2}
\theta _j = \frac{1}{2}\Big(\min_{\ell\neq j} \theta _{j,\ell} + \max_{\ell\neq j} \theta _{j,\ell}\Big).
\end{equation}
Subsequently, we define the infinite path $\mathscr{P}_j(\xi)$ using the parametrisation
\[
[0,+\infty) \ni t \mapsto \xi+t\e^{\im \theta _j}.
\]
Note that $\mathscr{P}_j(\xi)$ is entirely contained within $\Gamma_j(d)$. We seek solutions of the differential equation \eqref{eq3} of the form
\begin{equation}\label{eq17}
W_j (u,\xi ) = \exp \Big( \e^{2\pi \im j/n} u\xi  + X_j (\xi ) \Big)\big( 1 + \eta_j(u,\xi) \big),
\end{equation}
where
\begin{equation}\label{eq18}
X_j (\xi ) = -\frac{1}{n}\int_{\mathscr{P}_j(\xi)}  \bigg( \sum\limits_{k = 0}^{n - 2} \e^{2\pi \im j(k + 1)/n} \psi _{k,1} (t) \bigg)\d t ,
\end{equation}
$\Re(u)>\sigma'$, $\xi \in \Gamma_j(d)$, and the functions $\eta_j(u,\xi)$ satisfy the limit conditions
\begin{equation}\label{eq19}
\lim_{t \to  + \infty } \left[ \frac{{\d^s \eta _j (u,\zeta )}}{{\d\zeta ^s }} \right]_{\zeta  = \xi  + t\e^{\im\theta _j } }  = 0 \qquad (0\le s\le n-2).
\end{equation}
The convergence of the integral in \eqref{eq18} is assured by Condition \ref{cond} (cf. Lemmas \ref{lemma1} and \ref{lemma5}). Furthermore, we seek our solutions to exhibit asymptotic expansions in the form
\begin{equation}\label{eq22}
\eta_j(u,\xi) \sim \sum\limits_{m = 1}^\infty  \frac{\mathsf{A}_{j,m} (\xi )}{u^m },
\end{equation}
as $u\to \infty$ uniformly with respect to $\xi\in \Gamma_j(d)$. The coefficients $\mathsf{A}_{j,m} (\xi )$ must satisfy recurrence relations of the type \eqref{eq6} with a suitable choice of integration constants. Note that the conditions in \eqref{eq19} imply
\begin{equation}\label{eq20}
\lim_{t \to  + \infty } \left[ \frac{\d^s \mathsf{A}_{j,m} (\zeta )}{\d\zeta ^s } \right]_{\zeta  = \xi  + t\e^{\im\theta _j } }  = 0 \qquad (0\le s\le n-2).
\end{equation}
In Section \ref{coeffestsec}, we will show that if $\mathsf{A}_{j,0}(\xi) = 1$ and
\begin{gather}\label{eq21}
\begin{split}
 & \mathsf{A}_{j,m} (\xi ) =  \frac{1}{n}\sum\limits_{p = 2}^{\min(n,m + 1)} {\binom{n}{p}\e^{2\pi \im j(n - p + 1)/n} \sum\limits_{r = 0}^{p} {\binom{p}{r}\int_{\mathscr{P}_j(\xi)}  {\B_{p - r} \frac{{\d^r \mathsf{A}_{j,m - p + 1} (t)}}{{\d t^r }}\d t} } } 
\\ & - \frac{1}{n}\sum\limits_{k = 0}^{n - 2} {\sum\limits_{p = 1}^{\min(m,k)} {\binom{k}{p}\e^{2\pi \im j(k - p + 1)/n} \sum\limits_{r = 0}^{p} {\binom{p}{r}\int_{\mathscr{P}_j(\xi)}  {\B_{p - r} \psi _{k,1} (t)\frac{{\d^r \mathsf{A}_{j,m - p} (t)}}{{\d t^r }}\d t} } } } 
\\ & - \frac{1}{n}\sum\limits_{k = 0}^{n - 2} {\sum\limits_{q = 0}^{m - 1} {\sum\limits_{p = 0}^{\min(k,q)} {\binom{k}{p}\e^{2\pi \im j(k - p + 1)/n} \sum\limits_{r = 0}^{p} {\binom{p}{r}\int_{\mathscr{P}_j(\xi)}  {\B_{p - r} \psi _{k,m - q + 1} (t)\frac{{\d^r \mathsf{A}_{j,q - p} (t)}}{{\d t^r }}\d t} } } } }
\end{split}
\end{gather}
for $m\ge 1$ and $\xi\in \Gamma_j(d)$, then the requirements \eqref{eq20} are satisfied. The $\mathsf{A}_{j,m}(\xi)$ defined in this way will serve as the coefficients in the asymptotic expansions \eqref{eq22} of our solutions \eqref{eq17}. Throughout the rest of the paper, unless explicitly stated otherwise, $\mathsf{A}_{j,m}(\xi)$ will consistently denote the coefficients determined by the recurrence relation \eqref{eq21}.

We are now in a position to formulate our main result.

\begin{theorem}\label{thm1}
Let $0<d<d'$. If Condition \ref{cond} holds then the differential equation \eqref{eq3} admits unique solutions $W_j(u,\xi)$, $0\le j\le n-1$, of the form \eqref{eq17} with the following properties. The functions $\eta_j (u,\xi )$ are analytic in $\left\{ u : \Re(u) > \sigma' \right\}  \times  \Gamma_j (d)$, meeting the limit conditions \eqref{eq19}, and can be represented in the form
\[
\eta_j(u,\xi)=\int_0^{+\infty}\e^{-u t}F_j(t,\xi)\d t.
\]
The Borel transforms $F_j(t, \xi)$ are analytic functions in $U(\mathfrak{a}_n d)\times \Gamma_j(d)$, with convergent power series expansions
\begin{equation}\label{eq34}
F_j(t,\xi)=\sum_{m=0}^\infty \frac{\mathsf{A}_{j,m+1}(\xi)}{m!} t^m,
\end{equation}
valid for $(t, \xi) \in B(\mathfrak{a}_n d)\times \Gamma_j(d)$. Moreover, for each $\sigma > \sigma'$ and $0 < r < d$, there exists a positive constant $K = K(\sigma, r)$, independent of $u$, $t$, $\xi$, and $j$, such that
\begin{equation}\label{Fbound}
\left|F_j(t,\xi)\right| \le K \e^{\sigma \Re(t)} \sum\limits_{\ell  \ne j} \frac{1}{{(\max (1,\Re(\xi \e^{ - \im\theta _{j,\ell } } )))^\rho  }} ,
\end{equation}
for $(t, \xi) \in U(\mathfrak{a}_n r) \times \Gamma_j(d)$. Finally, for each $\sigma > \sigma'$,
\begin{equation}\label{eq35}
\eta_j(u,\xi) \sim \sum\limits_{m = 1}^\infty  \frac{\mathsf{A}_{j,m} (\xi )}{u^m }
\end{equation}
as $u\to \infty$ in the closed half-plane $\Re(u) \ge \sigma$, uniformly with respect to $\xi\in \Gamma_j(d)$.
\end{theorem}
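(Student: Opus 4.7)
The plan is to convert \eqref{eq3} into a linear integral equation for $\eta_j$, take its inverse Laplace transform to obtain an equation for the putative Borel transform $F_j(t,\xi)$, and solve the Borel-plane equation via a contraction mapping in an appropriate weighted Banach space. The bound \eqref{Fbound} and the asymptotic expansion \eqref{eq35} then follow from the Banach-space estimate combined with Watson's lemma.

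\textbf{Reduction to an integral equation.} Substituting the ansatz \eqref{eq17} into \eqref{eq3}, and using Condition \ref{cond} to write $\psi_0(u,\xi) = 1 + O(u^{-1})$ and $\psi_k(u,\xi) = O(u^{-1})$ for $k\ge 1$, yields a linear $n$\textsuperscript{th}-order ODE for $\eta_j$ whose principal part equals
\[
\mathcal{L}_j = \partial_\xi \prod_{\ell \ne j}\big(\partial_\xi - (\e^{2\pi \im \ell/n} - \e^{2\pi \im j/n})u\big),
\]
the exponential factor $\exp(X_j(\xi))$ being chosen precisely to eliminate the leading $O(u^{n-1})$ remainder. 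The angle convention \eqref{theta1}--\eqref{theta2} ensures $|\theta_j - \theta_{j,\ell}| \le \frac{n-2}{2n}\pi$, hence $\cos(\theta_j - \theta_{j,\ell}) \ge \sin(\pi/n) > 0$ for each $\ell \ne j$. Consequently, each factor $\partial_\xi - (\e^{2\pi \im \ell/n} - \e^{2\pi \im j/n})u$ can be inverted by integration along $\mathscr{P}_j(\xi)$ against an exponentially decaying kernel provided $\Re(u)$ is sufficiently large, while the outer $\partial_\xi$ factor is inverted using the limit conditions \eqref{eq19}. Iterating these inversions produces a linear fixed-point equation $\eta_j = \mathcal{T}_j[\eta_j]$.

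\textbf{Transfer to the Borel plane and contraction.} Writing $\eta_j(u,\xi) = \int_0^{+\infty} \e^{-ut} F_j(t,\xi)\d t$ and using the Laplace representations \eqref{eq8}, the inverse Laplace transform converts products $\psi_k \eta_j$ into convolutions $\Psi_k \ast F_j$ in $t$, division by $u$ into integration $\int_0^t$, and the $\xi$-integrations along $\mathscr{P}_j(\xi)$ pass through unchanged. This leads to a linear equation $F_j = \mathcal{S}_j[F_j] + h_j$ on $U(\mathfrak{a}_n d) \times \Gamma_j(d)$. I would work in a Banach space $\mathcal{B}$ of analytic functions on $U(\mathfrak{a}_n r) \times \Gamma_j(d)$ equipped with a weighted supremum norm tailored so that the target bound \eqref{Fbound} becomes $\|F_j\| \le K$. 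The precise value $\mathfrak{a}_n$ appears here because the defining identity $(1+\mathfrak{a}_n)^n = 1 + 2n\mathfrak{a}_n$ is exactly what guarantees that the $n$-fold iterated convolutions on the Borel side remain inside $U(\mathfrak{a}_n d')$, where $\Psi_k$ is analytic by Condition \ref{cond}. For $\sigma$ sufficiently large, $\mathcal{S}_j$ is a strict contraction on $\mathcal{B}$, yielding a unique fixed point $F_j$ that simultaneously satisfies \eqref{Fbound}.

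\textbf{Consequences and main obstacle.} With $F_j$ in hand, analyticity of $\eta_j$ on $\{\Re(u) > \sigma'\} \times \Gamma_j(d)$ follows by dominated convergence, the limit conditions \eqref{eq19} by differentiation under the integral sign together with the decay of $F_j$, and the convergent power series \eqref{eq34} by Taylor expansion of $F_j$ at $t=0$, whose coefficients are identified with $\mathsf{A}_{j,m+1}(\xi)/m!$ by substituting the series into the Borel-plane equation and comparing term by term with the recurrence \eqref{eq21}. The asymptotic expansion \eqref{eq35} then follows from Watson's lemma applied uniformly on $\Re(u) \ge \sigma$, with \eqref{Fbound} controlling remainders uniformly in $\xi$. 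The main technical obstacle is the contraction estimate for $\mathcal{S}_j$: one must show that the Borel-side convolutions, coupled with the $\xi$-integrals along $\mathscr{P}_j(\xi)$ and the Bell-polynomial factors $\B_p$ arising from \eqref{eq21}, preserve the weighted norm with small operator norm. Making the decay $(1+|\xi|)^{-1-\rho}$ of the $\Psi_k$ interact cleanly with the sector-based weights $(\max(1,\Re(\xi\e^{-\im\theta_{j,\ell}})))^{-\rho}$ appearing in \eqref{Fbound}, while simultaneously preserving analyticity throughout the unbounded domain $U(\mathfrak{a}_n d)$ rather than merely on a small disk, is where the principal effort will go.
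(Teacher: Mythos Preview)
Your approach is genuinely different from the paper's, and while the high-level plan is plausible, it hides a difficulty that the paper explicitly chose to avoid.

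The paper does \emph{not} work in the Borel plane. Instead it proceeds in three stages entirely in the $u$-plane: (i) it first proves, by a lengthy induction on the recurrence \eqref{eq21}, Gevrey-type bounds of the form $|\mathsf{A}_{j,m}(\xi)| \le C\,\mathfrak{a}_n^{-(m-1)} d^{-m}(m+L+1)!$ together with analogous bounds on all $\xi$-derivatives; (ii) it then constructs $\eta_j$ as a truncated series plus a remainder $R_{j,N}$, where $R_{j,N}$ solves an integral equation in $\xi$ (Olver's successive-approximation scheme, with the inversion of $\partial_\xi^n - u^n$ carried out along the paths $\mathscr{P}_{j,\ell}(\xi)$, not $\mathscr{P}_j(\xi)$), and the Gevrey bounds from (i) feed into a Gevrey bound $|R_{j,N}|\le C'\mathfrak{a}_n^{-N}d^{-N}(N+n+L)!\,|u|^{-N}$; (iii) only then is $F_j$ \emph{defined} as the inverse Laplace transform of $\eta_j$, and its analytic continuation to $U(\mathfrak{a}_nd)$ together with \eqref{Fbound} is read off from the Gevrey remainder bounds via Taylor expansion in $t$. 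The constant $\mathfrak{a}_n$ enters in step (i), precisely because the defining relation $(1+\mathfrak{a}_n)^n=1+2n\mathfrak{a}_n$ makes the sums $\sum_{p}\binom{n}{p}\mathfrak{a}_n^p$ in the coefficient recursion close up; it has nothing to do with convolutions staying inside $U(\mathfrak{a}_nd')$ as you suggest.

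The gap in your plan is the ``transfer to the Borel plane'' step. Inverting the factors $\partial_\xi-(\e^{2\pi\im\ell/n}-\e^{2\pi\im j/n})u$ produces kernels $\exp\big((\e^{2\pi\im\ell/n}-\e^{2\pi\im j/n})u(\xi-t')\big)$. Under the Laplace ansatz $\eta_j(u,\cdot)=\int_0^\infty \e^{-ut}F_j(t,\cdot)\,\d t$, multiplication by such a kernel becomes a \emph{shift} of $F_j$ in $t$ by the $\xi$-dependent amount $(\e^{2\pi\im\ell/n}-\e^{2\pi\im j/n})(\xi-t')$, which is unbounded as $t'$ ranges over $\mathscr{P}_{j,\ell}(\xi)$. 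The resulting Borel-plane equation is therefore non-local in $t$ with shifts of arbitrary size coupled to the $\xi$-integration; formulating a Banach space on $U(\mathfrak{a}_n r)$ that absorbs these shifts and still yields a contraction is far from routine, and your proposal gives no mechanism for it. The paper acknowledges exactly this obstruction in Remark~(iv) after Theorem~\ref{thm1}: the direct Borel-plane construction used for $n=2$ in \cite{Nemes2021} breaks down in the present generality because of the convolutions with $\Psi_k$, and the ODE route was adopted specifically to circumvent it. A secondary issue: integrating every factor along the single path $\mathscr{P}_j(\xi)$, as you propose, makes the kernels decay only for $u$ in a sector narrower than $\Re(u)>\sigma'$; the paper uses the $n-1$ distinct paths $\mathscr{P}_{j,\ell}(\xi)$ so that each kernel decays for all $\Re(u)>0$, and this is also what produces the $\theta_{j,\ell}$-dependent weights in \eqref{Fbound}.
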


\paragraph{\emph{Remarks.}}
\begin{enumerate}[(i)]\itemsep0.5em
\item To gain a better understanding of the main result, consider the following particular case. Suppose that the functions $\psi_k(u,\xi)$ are (potentially multivalued) analytic functions in the $\xi$-plane, except for a finite set of singularities (which could involve branch points) situated at $\xi = \xi_m$, where $m=1,2,\ldots$. For simplicity, we focus on the Borel summability of the formal solution $\widehat{W}_{0}(u,\xi)$. We introduce branch cuts extending from each singularity $\xi_m$ towards
\[
\xi_m - \infty\e^{\im \theta _0}.
\]
Subsequently, we remove each infinite (closed) sector that has its vertex at $\xi_m$, an opening angle of $\frac{n-2}{n}\pi$, and the branch cut serving as the axis of symmetry. The pre-image in the $z$-plane of each of the radii of these sectors is a Stokes curve. These curves emanate either from a zero of $f_{0,0}(z)$ or from a singularity of one of the functions $f_k(u,z)$. Suppose that the functions $\psi_k(u,\xi)$ satisfy Condition \ref{cond} within this resulting $\xi$-domain, which we denote by $\Delta_0$. Then we can set 
\[
\Gamma_0(d)=\Big\{ \xi: \xi \in \Delta_0, \, \inf_{\zeta\in\Delta_0}|\xi-\zeta|\ge d \Big\};
\]
for example, see Figure \ref{Figure1b}. Consequently, under these circumstances, Theorem \ref{thm1} ensures the Borel summability of the formal solution $\widehat{W}_{0}(u,\xi)$ within a Stokes region, away from Stokes curves. A similar assertion applies to the remaining $n-1$ formal solutions.

\item Drawing from the theory of second-order equations \cite{Giller2000,Giller2001}, we anticipate that, in quite general scenarios, the singularities of the Borel transform $F_j(t, \xi)$ within the $t$-plane will be situated at
\[
t =  - (\e^{2\pi \im\ell /n}  - \e^{2\pi \im j/n} )(\xi  - \xi _m ),\quad \ell\neq j.
\]
In line with the preceding remark, $\xi_m$ denotes a singularity of any of the functions $\psi_k(u,\xi)$. Consequently, the power series \eqref{eq34} should converge within the disc
\[
\left| t \right| < \min_{\ell  \ne j} \big| \e^{2\pi \im\ell /n}  - \e^{2\pi \im j/n} \big|\inf_{\xi  \in \Gamma_j (d)} \left| \xi  - \xi _m \right| = 2\sin \left(\frac{\pi }{n}\right)d.
\]
For $n=2$, this aligns with the disc $B(\mathfrak{a}_2 d)=B(2d)$, and for large values of $n$,
\[
2\sin \left( \frac{\pi }{n}\right) \sim \frac{2\pi}{n} \sim (5.0008191965\ldots) \times \mathfrak{a}_n 
\]
via Proposition \ref{athm}. Consequently, the domains $B(\mathfrak{a}_n d)$ and $U(\mathfrak{a}_n d)$ that our analysis suggests do not seem unrealistic.

\item Using Theorem \ref{thm1}, we can establish the Borel summability of formal solutions whose $m^{\text{th}}$ asymptotic expansion coefficients evaluate to some prescribed complex values $\alpha_{j,m}$ at certain fixed $\beta_j \in \Gamma_j (d)$. Assuming the formal series
\[
\sum\limits_{m = 1}^\infty \frac{\alpha_{j,m}}{u^m}
\]
are Borel summable in $U(\mathfrak{a}_n d)$ with some $0<d<d'$, provided $\Re (u)>\sigma'' \geq 0$, let $G_j (t)$ denote their respective Borel transforms. If $W_j (u,\xi)$ represent the exact solutions given by \eqref{eq17} and Theorem \ref{thm1}, then
\[
W_j (u,\xi,\beta_j) = W_j (u,\xi)\left(1 + \int_0^{+\infty} \e^{-ut} F_j(t,\beta_j )\d t\right)^{-1} \left(1+\int_0^{+\infty} \e^{-ut} G_j(t)\d t\right)
\]
also stand as solutions of the differential equation \eqref{eq3} for sufficiently large $u$. The right-hand side can be re-expressed as
\[
\exp \Big( \e^{2\pi \im j/n} u\xi  + X_j (\xi ) \Big)\left(1 + \int_0^{+\infty} \e^{-ut} F_j(t,\xi,\beta_j )\d t\right),
\]
where, for each $0<r<d$, the functions $F_j (t,\xi,\beta_j )$ are analytic in $(t,\xi) \in U(\mathfrak{a}_n r) \times \Gamma_j (d)$, and the Laplace transforms converge for $\Re(u) > K + \sigma'' + \sigma$, where $\sigma>\sigma'$ and $K = K(\sigma,r)$ is the corresponding constant provided in Theorem \ref{thm1} (cf. \cite[Ch. 5, Theorem 5.55]{Mitschi2016}). By invoking Watson's lemma \cite[Ch. 4, \S3]{Olver1997}, the Laplace transforms yield asymptotic expansions in inverse powers of the large parameter $u$, meeting the aforementioned requirements due to their construction.

\item In the case of second-order equations ($n=2$) with a potential function of the form $f_0(u,z) = f_{0,0}(z) + f_{0,1}(z)u^{-1} + f_{0,2}(z)u^{-2}$, Theorem \ref{thm1} is equivalent to Theorem 1.1 of the recent paper by the present author \cite{Nemes2021}. In \cite{Nemes2021}, the Borel transforms $F_j(t,\xi)$, where $j=1,2$, are constructed by establishing and solving partial differential equations. However, in the general scenario where some of the asymptotic expansions \eqref{eq23} (or equivalently \eqref{eq15}) do not terminate after a finite number of terms, the presence of the Borel transforms $\Psi_k(t,\xi)$ introduces complexities that complicate such an analysis, leading to the breakdown of the argument used in \cite{Nemes2021}. In the present work, addressing this difficulty, we adopted an alternative proof strategy relying on ordinary differential equation techniques instead of partial differential equation methods.

\end{enumerate}

In the following theorem, we present convergent alternatives to the asymptotic expansions \eqref{eq35}. The coefficients in these expansions depend on an additional (positive) parameter $\omega$ and can be computed via the formula
\[
\mathsf{B}_{j,m}(\omega,\xi)=\sum\limits_{r=0}^m\omega^{m-r}|s(m,r)|\mathsf{A}_{j,r+1}(\xi),
\]
where $m\ge 0$, and $s(m,r)$ denote the Stirling numbers of the first kind. Note that the $\mathsf{B}_{j,m}(\omega, \xi)$ are polynomials in $\omega$ of degree $m$ and are analytic functions of $\xi \in \Gamma_j(d)$.

\begin{theorem}\label{thm2}
Let $0<d<d'$ and $\omega  > \frac{\pi}{2\mathfrak{a}_n d}$. If Condition \ref{cond} holds, then the
functions $\eta _j (u,\xi )$ given in Theorem \ref{thm1} possess factorial expansions of the form
\[
\eta _j (u,\xi ) = \sum\limits_{m = 0}^\infty \frac{\mathsf{B}_{j,m} (\omega ,\xi )}{u(u + \omega ) \cdots (u + m\omega )} ,
\]
which are absolutely convergent for $\Re(u) \ge \sigma > \sigma'$, uniformly for $\xi \in \Gamma_j(d)$.
\end{theorem}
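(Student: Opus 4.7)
The plan is to apply the change of variable $\tau = 1 - \e^{-\omega t}$ to the Laplace representation $\eta_j(u,\xi) = \int_0^{+\infty}\e^{-ut}F_j(t,\xi)\,\d t$ from Theorem \ref{thm1}, which turns it into the beta-type integral
\[
\eta_j(u,\xi) = \frac{1}{\omega}\int_0^1(1-\tau)^{u/\omega-1}\varphi_j(\tau,\xi)\,\d\tau, \qquad \varphi_j(\tau,\xi) := F_j\!\left(-\omega^{-1}\log(1-\tau),\xi\right),
\]
expand $\varphi_j(\,\cdot\,,\xi)$ as a Taylor series around $\tau=0$, and integrate term by term. The generating function $(-\log(1-\tau))^k/k! = \sum_{m\ge k}|s(m,k)|\tau^m/m!$ for the unsigned Stirling numbers of the first kind, combined with the power series \eqref{eq34} for $F_j$ and a standard interchange of summation, produces $\varphi_j(\tau,\xi) = \sum_{m\ge 0}\mathsf{B}_{j,m}(\omega,\xi)\tau^m/(m!\omega^m)$; the beta identity $\int_0^1(1-\tau)^{u/\omega-1}\tau^m\,\d\tau = m!\,\omega^{m+1}/[u(u+\omega)\cdots(u+m\omega)]$ then formally furnishes the summands of the claimed factorial series.

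The first rigorous step is to verify that $\varphi_j(\,\cdot\,,\xi)$ is analytic on the open unit disc $|\tau|<1$, uniformly in $\xi\in\Gamma_j(d)$. This reduces to checking that the conformal map $\tau\mapsto t=-\omega^{-1}\log(1-\tau)$ carries $|\tau|<1$ into the region $U(\mathfrak{a}_n d)$ on which $F_j(\,\cdot\,,\xi)$ is analytic by Theorem \ref{thm1}. A direct parameterisation of the image of the unit circle via $\tau=\e^{\im\phi}$ shows that $|\Im(t)|\le\pi/(2\omega)$ throughout, while $|t|\le\pi/(3\omega)$ on the portion with $\Re(t)\le 0$; both quantities are strictly less than $\mathfrak{a}_n d$ under the hypothesis $\omega>\pi/(2\mathfrak{a}_n d)$, which gives the claim. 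Hence the Taylor series of $\varphi_j$ at $\tau=0$ has radius of convergence at least $1$, and the interchange of summation above is legitimate on any smaller disc.

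The hard part will be to establish absolute convergence of the resulting factorial series for $\Re(u)\ge\sigma>\sigma'$ and to identify its sum with $\eta_j(u,\xi)$. Fix auxiliary parameters $\sigma_1\in(\sigma',\sigma)$ and $r\in(0,d)$ with $\omega>\pi/(2\mathfrak{a}_n r)$; the bound \eqref{Fbound} then yields $|\varphi_j(\tau,\xi)|\le K|1-\tau|^{-\sigma_1/\omega}$ for $|\tau|<1$ and $\xi\in\Gamma_j(d)$. Via Cauchy's integral formula on optimally chosen circles $|\tau|=1-c/m$---exploiting the analytic extension of $\varphi_j$ across $|\tau|=1$ away from $\tau=1$, which holds by the same geometric reason as in the previous paragraph---one obtains the uniform coefficient estimate $|\mathsf{B}_{j,m}(\omega,\xi)|\le K'\,m!\,\omega^m\,m^{\sigma_1/\omega-1}$. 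Combined with the Stirling asymptotic $|u(u+\omega)\cdots(u+m\omega)|^{-1}\sim\omega^{-m-1}|\Gamma(u/\omega)|/(m!\,m^{\Re(u)/\omega})$, this shows that the $m$th term of the factorial series is of order $O(m^{(\sigma_1-\Re(u))/\omega-1})$, summable uniformly for $\Re(u)\ge\sigma$ and $\xi\in\Gamma_j(d)$. The identification of its sum with $\eta_j(u,\xi)$ finally follows from the truncation-error representation
\[
\eta_j(u,\xi) - \sum_{m=0}^{M-1}\frac{\mathsf{B}_{j,m}(\omega,\xi)}{u(u+\omega)\cdots(u+m\omega)} = \int_0^{+\infty}\e^{-ut}R_M\!\left(1-\e^{-\omega t},\xi\right)\d t,
\]
where $R_M$ denotes the $M$th Taylor remainder of $\varphi_j$; since $|R_M|$ is bounded uniformly in $M$ by a constant multiple of $|1-\tau|^{-\sigma_1/\omega}$ while tending to zero pointwise on $[0,1)$, the integrand admits the integrable majorant $K''\e^{-(\Re(u)-\sigma_1)t}$ for $\Re(u)>\sigma_1$, and Lebesgue's dominated convergence theorem concludes the proof as $M\to\infty$.
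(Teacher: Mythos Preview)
The paper does not prove Theorem~\ref{thm2} directly; it states that the deduction from Theorem~\ref{thm1} is ``completely analogous'' to the $n=2$ case treated in \cite[Theorem~1.3]{Nemes2021} and omits the argument. Your approach---the N\"orlund substitution $\tau=1-\e^{-\omega t}$, the Stirling-number identity for the Taylor coefficients of $\varphi_j$, and term-wise evaluation via the beta integral---is precisely the classical route for converting a Borel--Laplace integral into a factorial series, and it is almost certainly the argument intended by the reference. The geometric verification that $\tau\mapsto -\omega^{-1}\log(1-\tau)$ carries the closed unit disc (minus $\tau=1$) into $U(\mathfrak{a}_n d)$ under the hypothesis $\omega>\pi/(2\mathfrak{a}_n d)$ is correct.

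One step needs tightening. Using circles $|\tau|=1-c/m$ alone, together with the bound $|\varphi_j(\tau,\xi)|\le K|1-\tau|^{-\sigma_1/\omega}$ valid inside the unit disc, yields only
\[
\frac{|\mathsf{B}_{j,m}(\omega,\xi)|}{m!\,\omega^m}\le K(1-c/m)^{-m}(c/m)^{-\sigma_1/\omega}=O\!\big(m^{\sigma_1/\omega}\big),
\]
not $O(m^{\sigma_1/\omega-1})$; and the weaker exponent gives absolute convergence of the factorial series only for $\Re(u)>\sigma_1+\omega$, which is not what the theorem asserts. The sharper estimate you quote genuinely requires the analytic continuation of $\varphi_j$ across $|\tau|=1$ away from $\tau=1$ that you mention in the same sentence: one deforms Cauchy's contour to a keyhole consisting of a circle of some fixed radius $R>1$ (contributing $O(R^{-m})$) together with a loop of radius $\asymp 1/m$ about $\tau=1$ (contributing $O(m^{\sigma_1/\omega-1})$ via the growth bound). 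Your parenthetical remark shows you have the right ingredient in mind, but the phrase ``circles $|\tau|=1-c/m$'' on its own does not deliver the stated exponent. Once this is repaired, the Stirling asymptotics for the denominator and the dominated-convergence identification of the sum go through as you describe, and the argument matches the one the paper defers to \cite{Nemes2021}.
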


The deduction of Theorem \ref{thm2} from Theorem \ref{thm1} is completely analogous to that of the corresponding result in the specific case of $n=2$ \cite[Theorem 1.3]{Nemes2021}. Consequently, the proof is omitted.

The remaining part of the paper is organised as follows. The proof of Theorem \ref{thm1} consists of three steps. The first step is presented in Section \ref{coeffestsec}, where we establish that the coefficients $\mathsf{A}_{j,m} (\xi)$ and their derivatives satisfy Gevrey-type estimates. Moving on to the second step, in Section \ref{gevreyremainder}, we construct the functions $\eta_j(u, \xi)$ specified in Theorem \ref{thm1} in the form of truncated asymptotic expansions, each accompanied by its respective remainder term, possessing a Gevrey-type bound. The proof relies on solving specific integral equations, building upon the results established in Section \ref{coeffestsec}. The third and final step, detailed in Section \ref{Boreltransformsec}, involves the construction of the corresponding Borel transforms $F_j(t,\xi)$. This process relies on inverse Laplace transforming the functions $\eta_j(u, \xi)$ constructed in Section \ref{gevreyremainder}. We illustrate our theory in Section \ref{application} by applying it to an $n^{\text{th}}$-order Airy-type equation. The paper concludes with a discussion in Section \ref{discussion}.

\section{Gevrey-type estimates for the coefficients}\label{coeffestsec}

The primary objective of this section is to establish Gevrey-type estimates for the coefficients $\mathsf{A}_{j,m} (\xi)$ and their derivatives. Our findings can be summarised as follows.

\begin{proposition}\label{prop1}
Let $0<d<d'$. If Condition \ref{cond} is satisfied, then the following inequalities hold for each $0\le j\le n-1$, and any positive integers $m$ and $s$:
\begin{align*}
& \left| \mathsf{A}_{j,m} (\xi ) \right| \le  \frac{1}{\mathfrak{a}_n^{m - 1}}\frac{1}{d^m}\frac{(m + L+1)!}{(\max (1,\Re (\xi \e^{ - \im \theta _j } )))^{\rho } },
\\ & \left| \frac{\d^s \mathsf{A}_{j,m} (\xi )}{\d\xi ^s }\right| \le  \frac{1}{\mathfrak{a}_n^{m - 1} }\frac{1}{d^{m + s - 1}}\frac{(m + L+s )!}{1 + \left| \xi  \right|^{1 + \rho } }
\end{align*}
provided $\xi \in \Gamma_j(d)$. Here, $L$ is a suitable positive integer that is independent of $u$, $\xi$, $j$, $m$ and $s$.
\end{proposition}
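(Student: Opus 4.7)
The plan is to prove both bounds simultaneously by strong induction on $m$, driven by the recurrence \eqref{eq21}. Three preparatory estimates are needed first. Applying Cauchy's inequality to the Taylor expansion of the Borel transform $\Psi_k(t,\xi)$ about $t=0$ on the disc $B(\mathfrak{a}_n d')\subset U(\mathfrak{a}_n d')$ and using Condition \ref{cond}(ii) yields
\[
|\psi_{k,m}(\xi)|\le \frac{c(m-1)!}{(\mathfrak{a}_n d')^{m-1}(1+|\xi|^{1+\rho})};
\]
a second application of Cauchy in $\xi$ on discs of radius a fixed fraction of $d'-d$ (valid since $\Gamma_j(d)$ lies at distance at least $d$ from $\partial\Delta$ and $d<d'$) supplies analogous bounds for $\partial_\xi^s\psi_{k,m}$. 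These in turn yield bounds for $X_j^{(s)}(\xi)$ (since $X'_j$ is an explicit linear combination of the $\psi_{k,1}$), and, via standard combinatorial identities for complete Bell polynomials, for $\B_p$. The third ingredient is the path-integral estimate: whenever $|g(t)|\le A/(1+|t|^{1+\rho})$ along $\mathscr{P}_j(\xi)$,
\[
\Bigl|\int_{\mathscr{P}_j(\xi)} g(t)\,\d t\Bigr| \le \frac{C\,A}{(\max(1,\Re(\xi\e^{-\im\theta_j})))^\rho},
\]
obtained from an elementary computation of $\int_0^{+\infty}\d s/(1+|\xi+s\e^{\im\theta_j}|)^{1+\rho}$. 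This single estimate is what converts the isotropic decay of the integrand into the anisotropic decay along $\theta_j$ appearing in the coefficient bound.

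The base case $m=1$ reduces to a direct evaluation: since $\mathsf{A}_{j,0}=1$ and its derivatives vanish, most terms in \eqref{eq21} collapse, and the remaining ones are controlled by the preliminary estimates. For the inductive step, I would substitute the inductive hypothesis into the three sums of \eqref{eq21}, bound the Bell polynomials and $\psi_{k,\cdot}$ factors by the preliminary estimates, and then evaluate each path integral via the estimate above. The resulting finite sum over $p$, $r$, $k$, $q$ must be controlled by a constant times $(m+L+1)!/(\mathfrak{a}_n^{m-1}d^m)$ with the anisotropic spatial factor. The defining equation for $\mathfrak{a}_n$, equivalent to the identity $\sum_{p=2}^n\binom{n}{p}\mathfrak{a}_n^{p-1}=n$, is precisely what allows the dominant contribution from the first sum in \eqref{eq21} to be controlled with the correct Gevrey rate $\mathfrak{a}_n^{-(m-1)}$; the two remaining sums produce subleading corrections that are absorbed by choosing $L$ large enough to also swallow all fixed numerical constants accumulated along the way. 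The derivative bound on $\mathsf{A}_{j,m}^{(s)}$ is established in parallel, either by differentiating \eqref{eq21} directly (using $\frac{\d}{\d\xi}\int_{\mathscr{P}_j(\xi)}f(t)\,\d t=-f(\xi)$) or by invoking Cauchy's inequality on discs inside $\Gamma_j(d)$; the spatial factor correspondingly weakens to the isotropic form $(1+|\xi|^{1+\rho})^{-1}$ stated in the proposition.

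The main obstacle is the combinatorial bookkeeping at the inductive step: verifying that, once every factor has been bounded and every path integral has been evaluated, the aggregate of the three sums in \eqref{eq21} reassembles into exactly the Gevrey form of the hypothesis rather than into something larger. The identity $(1+\mathfrak{a}_n)^n=1+2n\mathfrak{a}_n$ is the sharp constraint that determines the value of $\mathfrak{a}_n$, and any weaker Gevrey rate would oversimplify but also weaken the result. A secondary difficulty is propagating the two distinct spatial decay factors (anisotropic for the coefficient, isotropic for its derivatives) consistently throughout the induction, since each interacts differently with the integrand bounds and the path-integral estimate. For $n=2$ this reduces essentially to the analogous result in \cite{Nemes2021}, whose structural outline would serve as a template; the present generalisation requires, however, carrying derivatives up to order $n-2$ through all three sums of \eqref{eq21}.
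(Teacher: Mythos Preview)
Your outline is correct and mirrors the paper's proof: preparatory bounds on $\psi_{k,m}$, the Bell polynomials $\B_p$, and the path integrals (these are Lemmas~\ref{lemma1}--\ref{lemma5} in the paper), followed by strong induction on $m$ via the recurrence \eqref{eq21}, with the defining identity for $\mathfrak{a}_n$ controlling the dominant sum. The paper implements your ``choosing $L$ large enough'' through an explicit increasing auxiliary sequence $C_m=C_{m-1}(1+c_4/(m+1))$, which is bounded at the end by $(m+1)^L$ and absorbed into the factorial; this is the concrete mechanism behind the bookkeeping you flag as the main obstacle. One caution: the Cauchy-inequality alternative you offer for the derivative bound would not close on its own, both because the recurrence for $\mathsf{A}_{j,m}$ already requires derivative bounds on lower-order coefficients (so the two estimates must be proved simultaneously, as the paper does by differentiating \eqref{eq21} directly), and because Cauchy on a disc would inherit the anisotropic factor $(\max(1,\Re(\xi\e^{-\im\theta_j})))^{-\rho}$ rather than produce the stronger isotropic decay $(1+|\xi|^{1+\rho})^{-1}$ stated in the proposition.
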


Equation \eqref{eq3} can be transformed into a first-order system using standard methods. Sibuya \cite{Sibuya2000} demonstrated that the coefficients of a formal solution to such a system satisfy Gevrey-type estimates. However, in our specific scenario, we require more explicit information regarding the domain of validity and the specific constants featured in these estimates.

In order to prove Proposition \ref{prop1}, we shall establish a series of lemmas.

\begin{lemma}\label{lemma6} Let $\alpha$, $\beta$ and $\gamma$ be non-negative integers with $\beta \ge \gamma$. Then, we have
\[
\sum\limits_{q  = 0}^\alpha  \binom{\alpha}{q} (\gamma + q )!(\alpha  + \beta - \gamma - q )! = \frac{{(\alpha + \beta + 1)!}}{{(\beta + 1)\binom{\beta}{\gamma}
}}.
\]
\end{lemma}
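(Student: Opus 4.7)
The plan is to prove the identity via the Beta integral, which converts the factorials into an integral and lets the binomial coefficient $\binom{\alpha}{q}$ be absorbed by the binomial theorem.

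First I would recall the Beta integral
\[
\frac{a!\,b!}{(a+b+1)!} = \int_0^1 x^a(1-x)^b\,\d x
\]
valid for non-negative integers $a,b$. Applied with $a = \gamma+q$ and $b = \alpha+\beta-\gamma-q$, the hypothesis $\beta\ge\gamma$ together with $0\le q\le\alpha$ guarantees both exponents are non-negative, so
\[
(\gamma+q)!(\alpha+\beta-\gamma-q)! = (\alpha+\beta+1)!\int_0^1 x^{\gamma+q}(1-x)^{\alpha+\beta-\gamma-q}\,\d x.
\]

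Next I would substitute this into the left-hand side of the identity, interchange the finite sum and the integral, factor out $x^\gamma(1-x)^{\beta-\gamma}$, and recognise the remaining sum as the binomial expansion of $(x+(1-x))^\alpha=1$. This reduces the left-hand side to
\[
(\alpha+\beta+1)!\int_0^1 x^{\gamma}(1-x)^{\beta-\gamma}\,\d x = (\alpha+\beta+1)!\cdot\frac{\gamma!(\beta-\gamma)!}{(\beta+1)!},
\]
by another application of the Beta integral.

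Finally I would identify this with the right-hand side by writing $(\beta+1)\binom{\beta}{\gamma} = (\beta+1)!/(\gamma!(\beta-\gamma)!)$, which yields exactly $(\alpha+\beta+1)!/((\beta+1)\binom{\beta}{\gamma})$. There is no real obstacle here; the only point that requires care is verifying that the Beta integral is applicable termwise, which is immediate from the inequalities $\gamma+q\ge 0$ and $\alpha+\beta-\gamma-q \ge \beta-\gamma \ge 0$ valid for $0\le q\le\alpha$.
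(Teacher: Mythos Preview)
Your proof is correct and takes a genuinely different route from the paper. The paper rewrites the sum in terms of Pochhammer symbols, recognises it as a terminating Gauss hypergeometric series ${}_2F_1(-\alpha,\gamma+1;-\alpha-\beta+\gamma;1)$, and then closes it using the Chu--Vandermonde identity. Your approach instead converts each summand to a Beta integral, collapses the sum via the binomial theorem $(x+(1-x))^\alpha=1$, and evaluates the resulting Beta integral. Your argument is more self-contained and arguably more elementary, requiring only the Euler Beta integral and the binomial theorem rather than an appeal to a named hypergeometric summation; the paper's approach, on the other hand, makes the connection to classical summation formulae explicit and would generalise more readily to non-integer parameters. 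Both are equally rigorous here since the integer hypotheses ensure all exponents and factorials are well-defined.
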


\begin{proof} We express the given sum in terms of Pochhammer symbols and the Gauss hypergeometric function ${}_2 F_1$ evaluated at unity, and then employ the Chu--Vandermonde identity \cite[\href{http://dlmf.nist.gov/15.4.E24}{Eq. 15.4.24}]{DLMF}:
\begin{align*}
&\sum\limits_{q  = 0}^\alpha  {\binom{\alpha}{q}(\gamma  +q )!(\alpha  + \beta  - \gamma  - q )!}  = \frac{{(\alpha  + \beta )!}}{{
   \binom{ - \alpha  - \beta  + \gamma  + 1}{\gamma} }}\sum\limits_{q  = 0}^\alpha  {( - 1)^\gamma  \frac{{( - \alpha )_q  (\gamma  + 1)_q  }}{{( - \alpha  - \beta  + \gamma )_q  }}\frac{1}{{q !}}} \\ & = \frac{{(\alpha  + \beta )!}}{{ \binom{\alpha  + \beta }{\gamma} }}\sum\limits_{q  = 0}^\alpha  {\frac{{( - \alpha )_q (\gamma  + 1)_q  }}{{( - \alpha  - \beta  + \gamma )_q  }}\frac{1}{{q !}}} 
 = \gamma !(\alpha  + \beta  - \gamma )!{}_2F_1 ( - \alpha ,\gamma  + 1, - \alpha  - \beta  + \gamma ,1)
\\ &  = \gamma !(\alpha  + \beta  - \gamma )!\frac{{( - \alpha  - \beta  - 1)_\alpha  }}{{( - \alpha  - \beta  + \gamma )_\alpha  }} = \gamma !(\alpha  + \beta  - \gamma )!\frac{{(\alpha  + \beta  + 1)!(\beta  - \gamma )!}}{{(\beta  + 1)!(\alpha  + \beta  - \gamma )!}} = \frac{{(\alpha  + \beta  + 1)!}}{{(\beta  + 1)\binom{\beta}{\gamma}
}}.
\end{align*}
\end{proof}

\begin{lemma}\label{lemma7} Assume that $\alpha$ and $\beta$ are non-negative integers. Then, the following inequalities hold:
\begin{equation}\label{ineq1}
\alpha !\beta ! \le (\alpha  + \beta  - 1)! 
\end{equation}
provided that $\alpha ,\beta  \ge 1$,
\begin{equation}\label{ineq2}
\alpha !\beta ! \le (\alpha  + \beta )!
\end{equation}
provided that $\alpha ,\beta  \ge 0$. Furthermore, if $\alpha \ge 1$, we have
\begin{equation}\label{ineq3}
\sum\limits_{q = 0}^{\alpha  - 1} {(\alpha  - q)!(q + 1)!}  \le 4 \cdot \alpha !
\end{equation}
and
\begin{equation}\label{ineq4}
\sum\limits_{q = 1}^\alpha  {(\alpha  - q)!q!}  \le 2 \cdot \alpha !.
\end{equation}
Finally, if $\alpha \ge 1$ and $\beta \ge 2$, then
\begin{equation}\label{ineq5}
\sum\limits_{q = 0}^{\alpha - 1} {\frac{{(\alpha - q)!(q + \beta)!}}{{q + 1}}} \le 2\frac{{(\alpha + \beta)!}}{{\alpha + 1}}
\end{equation}
and
\begin{equation}\label{ineq6}
\sum\limits_{q = 1}^{\alpha} {\frac{{(\alpha - q)!(q + \beta)!}}{{q + 1}}} \le 3\frac{{(\alpha + \beta)!}}{{\alpha + 1}} .
\end{equation}
\end{lemma}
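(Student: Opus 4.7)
The plan is to treat the six inequalities essentially in order, with each one reducing to an elementary binomial-coefficient computation. The common underlying idea is to rewrite a product of factorials as a reciprocal binomial coefficient and then exploit the unimodality of the binomial coefficients $\binom{N}{k}$ in $k$.

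I would first observe that $(\alpha+\beta-1)!/(\alpha!\beta!) = \binom{\alpha+\beta-1}{\alpha-1}/\alpha$ and $(\alpha+\beta)!/(\alpha!\beta!) = \binom{\alpha+\beta}{\alpha}$, so that \eqref{ineq1} reduces to showing $\binom{\alpha+\beta-1}{\alpha-1}\ge \alpha$ for $\alpha,\beta\ge 1$, which is immediate by monotonicity (for $\beta=1$ it is an equality, for $\beta\ge 2$ it follows from $\binom{\alpha+\beta-1}{\alpha-1}\ge \binom{\alpha}{\alpha-1}=\alpha$), and \eqref{ineq2} reduces to $\binom{\alpha+\beta}{\alpha}\ge 1$.

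For \eqref{ineq4}, I would separate the term $q=\alpha$, which contributes exactly $\alpha!$, and rewrite the interior terms as $(\alpha-q)!q!=\alpha!/\binom{\alpha}{q}$, then use the uniform bound $\binom{\alpha}{q}\ge \alpha$ for $1\le q\le\alpha-1$ to get a total of at most $\alpha!+(\alpha-1)/\alpha\cdot\alpha!\le 2\alpha!$. Inequality \eqref{ineq3} is similar after the substitution $r=q+1$, which converts the sum to $(\alpha+1)!\sum_{r=1}^{\alpha}1/\binom{\alpha+1}{r}$; splitting off the two boundary terms (each equal to $1/(\alpha+1)$) and bounding the interior with $\binom{\alpha+1}{r}\ge\binom{\alpha+1}{2}=(\alpha+1)\alpha/2$ by unimodality yields the bound $4\alpha!$.

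The key and most delicate step is \eqref{ineq5}. Here I would isolate the contribution of $q=\alpha-1$, namely $(\alpha+\beta-1)!/\alpha$, and check directly that this is bounded above by $(\alpha+\beta)!/(\alpha+1)$ (equivalent to $\alpha(\alpha+\beta-1)\ge 1$, trivially true). For the remaining indices $0\le q\le\alpha-2$ one has $\alpha-q\in[2,\alpha]$, and the assumption $\beta\ge 2$ is precisely what guarantees $\alpha-q\le\alpha+\beta-2$, so that unimodality gives $\binom{\alpha+\beta}{\alpha-q}\ge \binom{\alpha+\beta}{2}=(\alpha+\beta)(\alpha+\beta-1)/2$. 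This reduces the remaining sum to a harmonic-type sum
\[
\frac{2(\alpha+\beta)!}{(\alpha+\beta)(\alpha+\beta-1)}\sum_{q=0}^{\alpha-2}\frac{1}{q+1},
\]
and the claim follows from the elementary estimate $2H_{\alpha-1}\le \alpha+2$, which can be verified by the crude bound $H_{\alpha-1}\le 1+\ln(\alpha-1)$ for $\alpha\ge 2$ (and by inspection for $\alpha=1$). Finally, \eqref{ineq6} follows from \eqref{ineq5} by adding the $q=\alpha$ term $(\alpha+\beta)!/(\alpha+1)$ and subtracting the non-negative $q=0$ term $\alpha!\beta!$, giving the factor $3$. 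The main obstacle is thus \eqref{ineq5}, where the hypothesis $\beta\ge 2$ is essential to keep $\alpha-q$ within the unimodal range all the way up to $\alpha-q=\alpha$; without it, the bound $\binom{\alpha+\beta}{2}$ would not be available and the harmonic-sum estimate would fail to absorb into the target $2(\alpha+\beta)!/(\alpha+1)$.
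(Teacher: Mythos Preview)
Your argument is correct and follows essentially the same route as the paper: rewrite products of factorials as reciprocal binomial coefficients and exploit the unimodality of $\binom{N}{k}$ in $k$, splitting off boundary terms and bounding the interior by $\binom{N}{2}$. The paper handles \eqref{ineq1} by a one-line direct product expansion, \eqref{ineq4} by invoking \eqref{ineq1} on the interior terms, and \eqref{ineq5} by exactly your computation (isolating $q=\alpha-1$, using $\binom{\alpha+\beta}{q+\beta}\ge\binom{\alpha+\beta}{2}$ on the rest, and a harmonic-sum bound); only cosmetic differences in the intermediate constants appear.

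The one place where you genuinely streamline the paper is \eqref{ineq6}: you obtain it from \eqref{ineq5} by observing that the two sums differ only by the $q=0$ and $q=\alpha$ terms, the former being non-negative and the latter equal to $(\alpha+\beta)!/(\alpha+1)$. The paper instead redoes the full splitting-and-unimodality computation from scratch for $\alpha\ge 3$ after checking $\alpha=1,2$ by hand. Your deduction is cleaner. Two very minor points: in \eqref{ineq3} the phrase ``two boundary terms'' is slightly off when $\alpha=1$ (there is only one term), though the resulting bound still holds; and in \eqref{ineq5} you should state explicitly that $(\alpha+2)(\alpha+\beta-2)!\le(\alpha+\beta)!/(\alpha+1)$, equivalently $(\alpha+1)(\alpha+2)\le(\alpha+\beta)(\alpha+\beta-1)$, which is immediate from $\beta\ge 2$.
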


\begin{proof} Inequality \eqref{ineq1} is clearly satisfied when $\alpha \ge 1$ and $\beta = 1$. If $\alpha \ge 1$ and $\beta \ge 2$, then
\[
(\alpha  + \beta  - 1)! = \alpha !(\alpha  - 1 + 2) \cdots (\alpha  - 1 + \beta ) \ge \alpha !\beta !.
\]

If either $\alpha$ or $\beta$ (or both) is equal to $0$, then inequality \eqref{ineq2} certainly holds. Conversely, when both $\alpha$ and $\beta$ are greater than $0$, it is evident that inequality \eqref{ineq2} is less restrictive compared to \eqref{ineq1}.

The validity of inequality \eqref{ineq3} can be directly verified for $\alpha = 1$ and $2$, and for $\alpha \geq 3$:
\begin{multline*}
\sum\limits_{q = 0}^{\alpha  - 1} {(\alpha  - q)!(q + 1)!}  = 2 \cdot \alpha ! + (\alpha  + 1)!\sum\limits_{q = 1}^{\alpha  - 2} \frac{1}{
   \binom{\alpha  + 1}{q + 1}
} \\ \le 2 \cdot \alpha ! + (\alpha  + 1)!\sum\limits_{q = 1}^{\alpha  - 2} {\frac{2}{{(\alpha  + 1)\alpha }}}  \le 2 \cdot \alpha ! + 2 \cdot \alpha ! = 4 \cdot \alpha !.
\end{multline*}

The validity of inequality \eqref{ineq4} is apparent when $\alpha = 1$. For $\alpha \geq 2$, we can make use of inequality \eqref{ineq1} as follows:
\[
\sum\limits_{q = 1}^\alpha  {(\alpha  - q)!q!}  = \alpha ! + \sum\limits_{q = 1}^{\alpha  - 1} {(\alpha  - q)!q!}  \le \alpha ! + \sum\limits_{q = 1}^{\alpha  - 1} {(\alpha  - 1)!}  \le 2 \cdot \alpha !.
\]

The validity of inequality \eqref{ineq5} is readily apparent for $\alpha = 1$, and for $\alpha \geq 2$, we have
\begin{multline*}
 \sum\limits_{q = 0}^{\alpha - 1} {\frac{{(\alpha - q)!(q + \beta)!}}{{q + 1}}}  = \frac{{(\alpha + \beta)!}}{{\alpha(\alpha + \beta)}} + (\alpha + \beta)!\sum\limits_{q = 0}^{\alpha - 2} {\frac{1}{{(q + 1) \binom{\alpha + \beta}{q + \beta}}}} 
\\ \le \frac{{(\alpha + \beta)!}}{{\alpha(\alpha + \beta)}} + (\alpha + \beta - 2)!\sum\limits_{q = 0}^{\alpha - 2} {\frac{2}{{q + 1}}}  \le \frac{{(\alpha + \beta)!}}{{\alpha(\alpha + \beta)}} + (\alpha + \beta - 1)! \le 2\frac{{(\alpha + \beta)!}}{{\alpha + 1}}.
\end{multline*}

Inequality \eqref{ineq6} is clearly satisfied when $\alpha=1$ and $2$. If $\alpha\ge 3$, then 
\begin{align*}
\sum\limits_{q = 1}^{\alpha} {\frac{{(\alpha - q)!(q + \beta)!}}{{q + 1}}} & = \frac{{(\alpha + \beta)!}}{{\alpha + 1}} + \frac{{(\alpha + \beta - 1)!}}{\alpha} + (\alpha + \beta)!\sum\limits_{q = 1}^{\alpha - 2} {\frac{1}{{(q + 1)\binom{\alpha + \beta}{q + \beta}  }}} 
\\ & \le \frac{{(\alpha+ \beta)!}}{{\alpha + 1}} + \frac{{(\alpha + \beta)!}}{{\alpha(\alpha + \beta)}} + (\alpha +\beta - 2)!\sum\limits_{q = 1}^{\alpha - 2} {\frac{2}{{q + 1}}} 
\\ & \le \frac{{(\alpha + \beta)!}}{{\alpha + 1}} + \frac{{(\alpha + \beta)!}}{{\alpha(\alpha + \beta)}} + (\alpha + \beta - 1)! \le 3\frac{{(\alpha + \beta)!}}{{\alpha + 1}}.
\end{align*}
\end{proof}

\begin{lemma}\label{lemma1}Let $0<d<d'$ and $\sigma>\sigma'$. If Condition \ref{cond} is satisfied, then for each $0\le k\le n-2$ and any positive integer $N$, the functions $\psi _k (u,\xi)$ can be represented in the form
\[
\psi _k (u,\xi ) = \delta _{0,k}  + \sum\limits_{m = 1}^{N-1} {\frac{{\psi _{k,m} (\xi )}}{{u^m }}}  + \frac{\psi_{k,N}(u,\xi)}{u^N},
\]
where the functions $\psi _{k,m} (\xi )$ and $\psi_{k,N}(u,\xi)$ satisfy the following inequalities:
\[
\left| {\psi _{k,m} (\xi )} \right| \le c_1 \frac{1}{\mathfrak{a}_n^{m-1}}\frac{1}{{d^{m - 1} }}\frac{{(m - 1)!}}{{1 + \left| \xi  \right|^{1 + \rho } }},\quad \left| {\psi _{k,N} (u,\xi )} \right| \le c_1\frac{1}{\mathfrak{a}_n^N}\frac{1}{{d^N }}\frac{{N!}}{{1 + \left| \xi  \right|^{1 + \rho } }}
\]
provided that $\Re(u)\ge \sigma$ and $\xi \in \Delta$. Here, $c_1$ is a suitable constant that is independent of $u$, $\xi$, $k$, $m$ and $N$.
\end{lemma}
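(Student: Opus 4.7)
The plan is to apply a Watson's-lemma-style argument to the Laplace representation in Condition \ref{cond} (iii). Starting from $\psi_k(u,\xi) - \delta_{0,k} = \int_0^{+\infty} \e^{-ut}\Psi_k(t,\xi)\d t$ and integrating by parts $N$ times in $t$ (the boundary contributions at $+\infty$ vanish because $\e^{-ut}$ dominates the tempered growth of each $\partial_t^m \Psi_k$ established below), I obtain the identity
\[
\int_0^{+\infty} \e^{-ut}\Psi_k(t,\xi)\d t = \sum_{m=1}^{N} \frac{\partial_t^{m-1}\Psi_k(0,\xi)}{u^m} + \frac{1}{u^N}\int_0^{+\infty} \e^{-ut}\partial_t^N \Psi_k(t,\xi)\d t.
\]
Matching this against the expansion \eqref{eq15} and invoking uniqueness of asymptotic expansions, I identify $\psi_{k,m}(\xi) = \partial_t^{m-1}\Psi_k(0,\xi)$ for $m\ge 1$, and then peel off the last term of the sum to define
\[
\psi_{k,N}(u,\xi) := \partial_t^{N-1}\Psi_k(0,\xi) + \int_0^{+\infty} \e^{-ut}\partial_t^N\Psi_k(t,\xi)\d t,
\]
which is exactly the decomposition required by the lemma.

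The derivatives $\partial_t^m \Psi_k(t^*,\xi)$ at arbitrary $t^* \ge 0$ are then controlled by Cauchy's integral formula over the circle $|\tau - t^*| = \mathfrak{a}_n d$. A short geometric argument, using the two-part description $U(r) = \{\tau : \Re(\tau) > 0,\,|\Im(\tau)|<r\} \cup B(r)$, shows that this closed disc lies in $U(\mathfrak{a}_n d')$ precisely because of the strict inequality $d < d'$: points $\tau$ on the disc with $\Re(\tau) \ge 0$ have $|\Im(\tau)| \le \mathfrak{a}_n d < \mathfrak{a}_n d'$, while points with $\Re(\tau) < 0$ satisfy $|\tau|^2 \le (\mathfrak{a}_n d)^2 - t^{*2} < (\mathfrak{a}_n d')^2$. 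Combined with the bound from Condition \ref{cond} (ii), which contributes at most $c\,\e^{\sigma'(t^*+\mathfrak{a}_n d)}(1+|\xi|^{1+\rho})^{-1}$ on the contour, Cauchy's estimate yields
\[
\bigl|\partial_t^m \Psi_k(t^*,\xi)\bigr| \le \frac{c\,\e^{\sigma' \mathfrak{a}_n d}}{(\mathfrak{a}_n d)^m}\cdot \frac{m!\,\e^{\sigma' t^*}}{1+|\xi|^{1+\rho}}.
\]

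The bound on $\psi_{k,m}(\xi)$ follows immediately by setting $t^* = 0$ and replacing $m$ by $m-1$. For $\psi_{k,N}(u,\xi)$ I bound the two defining terms separately: the first, $\partial_t^{N-1}\Psi_k(0,\xi)$, is controlled by $c\,\e^{\sigma' \mathfrak{a}_n d}\,\mathfrak{a}_n d\,(\mathfrak{a}_n d)^{-N}(1+|\xi|^{1+\rho})^{-1}N!$ after rewriting $(N-1)! = N!/N \le N!$; for the second, the condition $\Re(u)\ge \sigma > \sigma'$ gives $\int_0^{+\infty} \e^{-(\Re(u)-\sigma')t}\d t \le (\sigma-\sigma')^{-1}$, and the Cauchy estimate at $t^* = t$ supplies the required factor $N!/(\mathfrak{a}_n d)^N$. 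Taking $c_1 = c\,\e^{\sigma' \mathfrak{a}_n d}\bigl(\mathfrak{a}_n d + (\sigma-\sigma')^{-1}\bigr)$ makes both inequalities simultaneously valid. The only genuinely delicate step is the geometric verification that Cauchy contours of uniform radius $\mathfrak{a}_n d$ can be drawn around every $t^*\ge 0$ while remaining inside $U(\mathfrak{a}_n d')$; this is exactly what forces the strict inequality $d<d'$ in the hypothesis, and once it is settled the rest of the argument is routine bookkeeping.
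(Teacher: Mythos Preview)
Your proof is correct and follows essentially the same approach as the paper: integrate by parts $N$ times in the Laplace representation, identify the coefficients as $t$-derivatives of $\Psi_k$ at $t=0$, and bound these via Cauchy's formula on circles of radius $\mathfrak{a}_n d$ (your geometric verification that these circles lie in $U(\mathfrak{a}_n d')$ is actually more explicit than the paper's). One minor bookkeeping slip: your choice $c_1 = c\,\e^{\sigma'\mathfrak{a}_n d}\bigl(\mathfrak{a}_n d + (\sigma-\sigma')^{-1}\bigr)$ fails to cover the \emph{first} inequality $|\psi_{k,m}(\xi)|\le c_1(\mathfrak{a}_n d)^{-(m-1)}(m-1)!/(1+|\xi|^{1+\rho})$ when $\mathfrak{a}_n d + (\sigma-\sigma')^{-1} < 1$, since that bound requires $c\,\e^{\sigma'\mathfrak{a}_n d}\le c_1$; the paper fixes this by taking $c_1 = c\,\e^{\sigma'\mathfrak{a}_n d}\bigl((\sigma-\sigma')^{-1} + \max(1,\mathfrak{a}_n d)\bigr)$.
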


\begin{proof} We shall prove that Lemma \ref{lemma1} holds with $c_1  = c\e^{\sigma'\mathfrak{a}_n d} ((\sigma  - \sigma ')^{ - 1}  + \max (1,\mathfrak{a}_n d))$. By performing $N$ successive integration by parts in \eqref{eq8}, we obtain
\[
\psi _k (u,\xi ) = \delta _{0,k}  + \sum\limits_{m = 1}^{N-1} \frac{\psi _{k,m} (\xi )}{u^m }  + \frac{1}{u^N }\left( \psi _{k,N} (\xi ) + \int_0^{ + \infty } \e^{ - ut} \frac{\d^N \Psi _k (t,\xi )}{\d t^N }\d t \right),
\]
where 
\[
\psi _{k,m} (\xi )  = \left[ {\frac{{\d^{m - 1} \Psi _k (t,\xi )}}{{\d t^{m - 1} }}} \right]_{t = 0} .
\]
Employing Cauchy's integral formula alongside part (ii) of Condition \ref{cond}, we deduce the estimate
\begin{align*}
\left| {\frac{{\d^N \Psi _k (t,\xi )}}{{\d t^N }}} \right| & = \left| {\frac{{N!}}{{2\pi \im }}\oint_{\left| {w - t} \right| =\mathfrak{a}_n d} {\frac{{\Psi _k (w,\xi )}}{{(w - t)^{N + 1} }}\d w} } \right| \\ & \le \frac{c}{{2\pi }}\frac{1}{\mathfrak{a}_n^{N+1}}\frac{1}{{d^{N + 1} }}\frac{{N!}}{{1 + \left| \xi  \right|^{1 + \rho } }}\oint_{\left|w - t\right| =  \mathfrak{a}_nd} {\e^{\sigma '{\mathop{\rm Re}\nolimits} (w)} |\d w|}  \le c\e^{\sigma '(t+ \mathfrak{a}_nd)} \frac{1}{\mathfrak{a}_n^N}\frac{1}{{d^N }}\frac{{N!}}{{1 + \left| \xi  \right|^{1 + \rho } }}
\end{align*}
for any $t\ge 0$ and $\xi\in \Delta$. Consequently, we can derive the bounds
\[
\left| {\psi _{k,m} (\xi )} \right|  = \left| {\left[ {\frac{{\d^{m - 1} \Psi _k (t,\xi )}}{{\d t^{m - 1} }}} \right]_{t = 0} } \right|  \le c\e^{\sigma '\mathfrak{a}_n d} \frac{1}{\mathfrak{a}_n^{m-1}}\frac{1}{{d^{m - 1} }}\frac{{(m - 1)!}}{{1 + \left| \xi  \right|^{1 + \rho } }}
\]
and
\[
\left| {\int_0^{ + \infty } {\e^{ - ut} \frac{{\d^N \Psi _k (t,\xi )}}{{\d t^N }}\d t} } \right| \le \int_0^{ + \infty } {\e^{ - \sigma t} \left| {\frac{{\d^N \Psi _k (t,\xi )}}{{\d t^N }}} \right|\d t}  \le \frac{c\e^{\sigma '\mathfrak{a}_n d} }{\sigma  - \sigma '}\frac{1}{\mathfrak{a}_n^N}\frac{1}{{d^N }}\frac{{N!}}{{1 + \left| \xi  \right|^{1 + \rho } }}
\]
provided $\Re(u)\ge \sigma>\sigma'$.
\end{proof}

\begin{lemma}\label{lemma2} Let $0<d<d'$. If Condition \ref{cond} is satisfied, then the following inequalities hold for each $0\le k\le n-2$, positive integer $m$, and any non-negative integer $p$:
\[
\left| {\frac{{\d^p \psi _{k,m} (\xi )}}{{\d\xi ^p }}} \right| \le c_2 \frac{1}{\mathfrak{a}_n^{m-1}}\frac{1}{{d^{m + p - 1} }}\frac{{(m - 1)!p!}}{{1 + \left| \xi  \right|^{1 + \rho } }}
\]
provided $\xi \in \Gamma_j(d)$. Here, $c_2$ is a suitable constant that is independent of $u$, $\xi$, $k$, $m$ and $p$.
\end{lemma}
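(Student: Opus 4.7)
The lemma extends the pointwise bound of Lemma \ref{lemma1} to arbitrary derivatives by means of Cauchy's integral formula. Since each $\xi \in \Gamma_j(d)$ is at distance at least $d$ from the boundary of $\Delta$, for every $0 < r < d$ the closed disc $\{w : |w - \xi| \le r\}$ is contained in $\Delta$, and $\psi_{k,m}$ is analytic there. Cauchy's formula for the $p$-th derivative therefore gives
\[
\left| \frac{\d^p \psi_{k,m}(\xi)}{\d \xi^p} \right| \le \frac{p!}{r^p} \sup_{|w - \xi| = r} \left| \psi_{k,m}(w) \right|.
\]

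Fix any $\sigma > \sigma'$ and invoke Lemma \ref{lemma1} to bound the supremum. Because the resulting estimate is expressed in terms of $1 + |w|^{1+\rho}$ rather than $1 + |\xi|^{1+\rho}$, I would first establish a ratio bound
\[
\frac{1 + |\xi|^{1+\rho}}{1 + |w|^{1+\rho}} \le C
\]
valid for every $w$ with $|w - \xi| \le d$, where $C = C(d, \rho)$ is independent of $\xi$. A simple two-case argument suffices: when $|\xi| \le 2d$, the numerator is bounded by a constant depending only on $d$ and $\rho$; when $|\xi| > 2d$, the reverse triangle inequality gives $|w| \ge |\xi| - d \ge |\xi|/2$, so that $1 + |w|^{1+\rho} \ge 2^{-(1+\rho)} |\xi|^{1+\rho}$, yielding a bounded ratio in this regime as well.

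Combining these two ingredients and letting $r \to d^-$ produces
\[
\left| \frac{\d^p \psi_{k,m}(\xi)}{\d \xi^p} \right| \le c_1 C \frac{1}{\mathfrak{a}_n^{m-1}} \frac{1}{d^{m + p - 1}} \frac{(m - 1)! \, p!}{1 + |\xi|^{1 + \rho}},
\]
which is the desired bound with $c_2 := c_1 C$. Both the Cauchy estimate and the ratio bound are elementary, so no substantial obstacle is anticipated; the lemma is essentially a mechanical corollary of Lemma \ref{lemma1}, the only mildly delicate point being the uniform control of the ratio for large $|\xi|$, which is handled by the case split above.
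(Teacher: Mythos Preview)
Your proposal is correct and follows essentially the same route as the paper: Cauchy's integral formula on a disc of radius (close to) $d$ combined with the pointwise bound from Lemma~\ref{lemma1}, followed by a two-case estimate for the ratio $(1+|\xi|^{1+\rho})/(1+|w|^{1+\rho})$. The only cosmetic differences are that the paper takes the radius equal to $d$ directly and splits cases at $|\xi|=1+d$ rather than $|\xi|=2d$, obtaining the explicit constant $c_2=c_1(1+(1+d)^{1+\rho})$; your limiting argument $r\to d^-$ is arguably cleaner since the closed disc of radius exactly $d$ need not lie in $\Delta$.
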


\begin{proof} We shall show that Lemma \ref{lemma2} holds with $c_2  = c_1 (1 + (1 +  d)^{1 + \rho } )$. Suppose that $\xi \in \Gamma_j(d)$ and let $p$ be a non-negative integer. By employing Cauchy's formula and using Lemma \ref{lemma1}, we deduce the following inequality:
\begin{equation}\label{ineq}
\left| {\frac{{\d^p \psi _{k,m} (\xi )}}{{\d\xi ^p }}} \right| = \left| {\frac{{p!}}{{2\pi \im}}\oint_{\left| {t - \xi } \right| =  d} {\frac{{\psi _{k,m} (t)}}{{(t - \xi )^{p + 1} }}\d t} } \right| \le c_1  \frac{1}{\mathfrak{a}_n^{m-1}}\frac{1}{{d^{m + p} }}\frac{{(m - 1)!p!}}{{2\pi }}\oint_{\left| {t - \xi } \right| = d} {\frac{{\left| {\d t} \right|}}{{1 + \left| t \right|^{1 + \rho } }}} .
\end{equation}
Now, when $\left| \xi  \right| \le 1 + d$,
\[
1 + \left| t \right|^{1 + \rho }  \ge 1 \ge \frac{1}{1 + (1 +  d)^{1 + \rho }}(1 + \left| \xi  \right|^{1 + \rho } )
\]
and when $\left| \xi  \right| \ge 1 + d$,
\[
1 + \left| t \right|^{1 + \rho }  \ge 1 + \left| \left| \xi  \right| - d\right|^{1 + \rho }  \ge 1 + \frac{1}{(1 + d)^{1 + \rho }}\left| \xi  \right|^{1 + \rho }  \ge \frac{1}{1 + (1 + d)^{1 + \rho }}(1 + \left| \xi  \right|^{1 + \rho } ).
\]
Thus, from \eqref{ineq}, we can infer that
\[
\left| {\frac{{\d^p \psi _{k,m} (\xi )}}{{\d\xi ^p }}} \right| \le c_1 (1 + (1 + d)^{1 + \rho } )\frac{1}{\mathfrak{a}_n^{m-1}} \frac{1}{{d^{m + p - 1} }}\frac{{(m - 1)!p!}}{{1 + \left| \xi  \right|^{1 + \rho } }}
\]
for any $\xi \in \Gamma_j(d)$.
\end{proof}

\begin{lemma}\label{lemma3} Suppose $0<d<d'$. If Condition \ref{cond} is met, then the following inequalities hold for each $0\le j\le n-1$, for every positive integer $p$ where $p \le n$, as well as for any non-negative integer $r$:
\[
\left| {\frac{{\d^r }}{{\d\xi ^r }}\B_p (X'_j (\xi ),X''_j (\xi ), \ldots ,X_j^{(p)} (\xi ))} \right| \le c_3 \frac{1}{{d^{p + r - 1} }}\frac{(p + r - 1)!}{1 + \left| \xi  \right|^{1 + \rho } }
\]
provided $\xi \in \Gamma_j(d)$. Here, $c_3$ is an appropriate constant that is independent of $u$, $\xi$, $j$, $p$ and $r$.
\end{lemma}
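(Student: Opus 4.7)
The plan is to apply Cauchy's integral formula to $\B_p(X_j'(\cdot),\ldots,X_j^{(p)}(\cdot))$ after first bounding this composition on a disc about $\xi$, with a carefully chosen radius that reproduces the target factorial $(p+r-1)!$. The starting point is the identity
\[
X_j^{(q)}(\xi)=\frac{1}{n}\sum_{k=0}^{n-2}\e^{2\pi\im j(k+1)/n}\psi_{k,1}^{(q-1)}(\xi)\qquad(q\ge1),
\]
obtained by differentiating \eqref{eq18} and integrating by parts along $\mathscr{P}_j(\xi)$; the endpoint contribution at infinity vanishes because Lemma \ref{lemma1} (with $m=1$) supplies the decay $|\psi_{k,1}(t)|\le c_1/(1+|t|^{1+\rho})$ throughout $\Delta$. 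In particular, $\B_p(X_j'(\cdot),\ldots,X_j^{(p)}(\cdot))$ is analytic throughout $\Delta$.

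Fix $0\le\lambda<d$. For $\xi\in\Gamma_j(d)$ and $|\zeta-\xi|\le\lambda$, the point $\zeta$ lies in $\Delta$ at distance at least $d-\lambda$ from $\partial\Delta$, and a Cauchy estimate on the circle $|w-\zeta|=d-\lambda$, combined with the identity above, Lemma \ref{lemma1}, and the $|w|$-versus-$|\zeta|$ comparison used in the proof of Lemma \ref{lemma2}, yields $|X_j^{(q)}(\zeta)|\le\tilde c(q-1)!/[(d-\lambda)^{q-1}(1+|\zeta|^{1+\rho})]$ for a constant $\tilde c$ depending only on $n,c_1,\rho,d'$. Since the complete Bell polynomial has non-negative coefficients in its monomial expansion, inserting this bound into the standard partition formula, writing $N=\sum_q k_q$, and invoking
\[
\sum_{\substack{\sum_q qk_q=p\\\sum_q k_q=N}}\frac{1}{\prod_q k_q!\,q^{k_q}}=\frac{|s(p,N)|}{p!}
\]
(a consequence of $\exp(-x\log(1-t))=(1-t)^{-x}$) followed by $\sum_N|s(p,N)|x^N=x(x+1)\cdots(x+p-1)$ collapse the estimate to
\[
|\B_p(X_j'(\zeta),\ldots,X_j^{(p)}(\zeta))|\le\frac{C\,(p-1)!}{(d-\lambda)^{p-1}(1+|\zeta|^{1+\rho})},
\]
with $C$ depending only on $n,\tilde c,d'$; the hypothesis $p\le n$ is used here to absorb the factor $(1+\tilde cd')^{p-1}$ arising from $\prod_{k=1}^{p-1}(\tilde c(d-\lambda)+k)\le(p-1)!(1+\tilde cd')^{p-1}$.

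For $r=0$, the choice $\lambda=0$ already gives the claim. For $r\ge1$ and $p=1$, the identity $\B_1(X_j')=X_j'$ reduces matters to bounding $|X_j^{(r+1)}(\xi)|$, which follows directly from the formula of the first paragraph at $q=r+1$ together with Lemma \ref{lemma2}. For $r\ge1$ and $p\ge2$, Cauchy's formula on $|\zeta-\xi|=\lambda$ combined with the $|\zeta|$-versus-$|\xi|$ comparison yields
\[
\biggl|\frac{\d^r}{\d\xi^r}\B_p(X_j'(\xi),\ldots,X_j^{(p)}(\xi))\biggr|\le\frac{r!(p-1)!\,CC_1}{\lambda^r(d-\lambda)^{p-1}(1+|\xi|^{1+\rho})};
\]
the minimiser $\lambda=rd/(r+p-1)$ produces $\lambda^r(d-\lambda)^{p-1}=d^{r+p-1}r^r(p-1)^{p-1}/(r+p-1)^{r+p-1}$, and Stirling's formula then gives $r!(p-1)!(r+p-1)^{r+p-1}/[r^r(p-1)^{p-1}]\le C_2\sqrt{p-1}\,(r+p-1)!$, which is bounded by a constant multiple of $(r+p-1)!$ since $p\le n$. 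The main obstacle is precisely this optimisation: a naive choice such as $\lambda=d/2$ would introduce a spurious factor $2^{r+p}$, and only the balance $\lambda=rd/(r+p-1)$ --- together with the Stirling number identity above and the Stirling-formula estimate --- furnishes the target rate $(r+p-1)!/d^{r+p-1}$ with a constant $c_3$ independent of $p,r,\xi$.
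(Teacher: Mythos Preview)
Your proof is correct but follows a genuinely different line from the paper's. The paper proceeds by induction on $p$: using the recurrence \eqref{Brec2} for the complete Bell polynomials, differentiating it $r$ times, and then closing the induction step via the exact combinatorial identity of Lemma~\ref{lemma6} (a Chu--Vandermonde evaluation), which yields the explicit constant $c_3=\max_{1\le p\le n}\tfrac{p}{d}\binom{p+c_2d-1}{p}$. Your argument instead bounds $\B_p$ directly through its partition-sum formula, recognising the inner sum as the unsigned Stirling number $|s(p,N)|/p!$ and collapsing via the rising-factorial identity; then you recover the derivatives by Cauchy's formula with the optimised radius $\lambda=rd/(r+p-1)$ and a Stirling-type bound to convert $r!(p-1)!(r+p-1)^{r+p-1}/[r^r(p-1)^{p-1}]$ into $(r+p-1)!$. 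The paper's induction is more elementary (no Stirling approximation needed) and, incidentally, gives a bound valid for \emph{all} $p\ge1$ with the $p$-dependent prefactor above---the restriction $p\le n$ enters only when passing to a uniform $c_3$. Your route is slicker and avoids induction, but leans on two analytic tools (the Stirling-number generating function and the non-asymptotic Stirling bounds $\sqrt{2\pi n}(n/\e)^n\le n!\le \e\sqrt{n}(n/\e)^n$); it also makes essential use of $p\le n$ to absorb both the factor $(1+\tilde c d')^{p-1}$ and the $\sqrt{p-1}$ emerging from the optimisation.
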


\begin{proof} We will use induction on $p$ to prove that for any positive integer $p$ and non-negative integer $r$, the inequalities
\begin{equation}\label{eq16}
\left| {\frac{{\d^r }}{{\d\xi ^r }}\B_p (X'_j (\xi ),X''_j (\xi ), \ldots ,X_j^{(p)} (\xi ))} \right| \le \frac{p}{d}\binom{p + c_2 d - 1}{p} \frac{1}{{d^{p + r - 1} }}\frac{{(p + r - 1)!}}{{1 + \left| \xi  \right|^{1 + \rho } }}
\end{equation}
hold whenever $\xi \in \Gamma_j(d)$. The statement of the lemma follows by taking $$c_3=\max_{1\le p\le n}\frac{p}{d}\binom{p + c_2 d - 1}{p}.$$ From the definition \eqref{eq18} and Lemma \ref{lemma2}, we can assert that 
\begin{equation}\label{eq10}
\left| {\frac{{\d^r }}{{\d\xi ^r }}\B_1 (X'_j (\xi ))} \right| = 
\left| X_j^{(r+1)} (\xi ) \right| \le \frac{1}{n}\sum\limits_{k = 0}^{n - 2} {\left| {\frac{{\d^r \psi _{k,1} (\xi )}}{{\d\xi ^r }}} \right|}  \le c_2 \frac{1}{{d^r }}\frac{{r!}}{{1 + \left| \xi  \right|^{1 + \rho } }},
\end{equation}
for $r\ge 0$ and $\xi \in \Gamma_j(d)$. Assume that the inequality \eqref{eq16} holds for all $r\ge 0$ and for $\B_1,\B_2,\ldots,\B_p$ with $p\ge 1$. Differentiating \eqref{Brec2} $r$ times gives
\begin{multline*}
\frac{{\d^r }}{{\d\xi ^r }}\B_{p + 1} (X'_j (\xi ),X''_j (\xi ), \ldots ,X_j^{(p + 1)} (\xi )) =  X_j^{(p + r + 1)} (\xi ) \\ + \sum\limits_{q = 0}^{p - 1} \binom{p}{q}\sum\limits_{s = 0}^r \binom{r}{s}\frac{{\d^s \B_{p - q} (X'_j (\xi ),X''_j (\xi ), \ldots ,X_j^{(p - q)} (\xi ))}}{{\d\xi ^s }}X_j^{(q + r - s + 1)} (\xi )  .
\end{multline*}
Therefore, using the induction hypothesis and Lemma \ref{lemma6},
\begin{align*}
& \left| {\frac{{\d^r }}{{\d\xi ^r }}\B_{p + 1} (X'_j (\xi ),X''_j (\xi ), \ldots ,X_j^{(p + 1)} (\xi ))} \right| \\ & \le c_2 \frac{1}{{d^{p+ r} }}\frac{{(p + r)!}}{{1 + \left| \xi  \right|^{1 + \rho } }} + c_2 \frac{1}{{d^{p + r - 1} }}\frac{1}{{1 + \left| \xi  \right|^{1 + \rho } }}\sum\limits_{q = 0}^{p - 1} \binom{p}{q}\frac{p-q}{d}\binom{p-q + c_2 d - 1}{p-q} \\ & \;\quad
 \times \sum\limits_{s = 0}^r \binom{r}{s}(p - q + s - 1)!(q + r - s)!
\\ &  = c_2 \frac{1}{{d^{p + r} }}\frac{{(p + r)!}}{{1 + \left| \xi  \right|^{1 + \rho } }} + c_2 \frac{1}{{d^{p + r } }}\frac{{(p + r)!}}{{1 + \left| \xi  \right|^{1 + \rho } }}\sum\limits_{q = 0}^{p - 1} { \binom{p-q + c_2 d - 1}{p-q}} 
\\ & = \frac{p+1}{d}\binom{p + c_2 d }{p+1} \frac{1}{{d^{p+ r} }}\frac{{(p + r)!}}{{1 + \left| \xi  \right|^{1 + \rho } }}
\end{align*}
which completes the induction step and the proof.
\end{proof}

\begin{lemma}\label{lemma4} Let $0<d<d'$. If Condition \ref{cond} is satisfied, then the following inequalities hold for each $0\le j\le n-1$, for any positive integers $m$ and $p$, $p\le n$, and any non-negative integer $r$:
\[
\left| {\frac{{\d^r }}{{\d\xi ^r }}\left( \B_p (X'_j (\xi ),X''_j (\xi ), \ldots ,X_j^{(p)} (\xi ))\psi _{k,m} (\xi ) \right)} \right| \le  c_2 c_3 \frac{1}{\mathfrak{a}_n^{m-1}}\frac{1}{{d^{m + p + r-1} }}\frac{{(m - 1)!(p + r )!}}{{1 + \left| \xi  \right|^{1 + \rho } }}
\]
provided that $\xi \in \Gamma_j(d)$. Here, $c_2$ and $c_3$ are the constants from Lemmas \ref{lemma2} and \ref{lemma3}, respectively.
\end{lemma}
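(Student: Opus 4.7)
The plan is to proceed by a direct application of the Leibniz product rule, then invoke the bounds from Lemmas \ref{lemma2} and \ref{lemma3}, and finally identify the resulting combinatorial sum as an instance of Lemma \ref{lemma6}.

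First I would write
\[
\frac{\d^r}{\d\xi^r}\bigl(\B_p(X'_j(\xi),\ldots,X_j^{(p)}(\xi))\,\psi_{k,m}(\xi)\bigr)=\sum_{s=0}^{r}\binom{r}{s}\,\frac{\d^s\B_p}{\d\xi^s}\,\frac{\d^{r-s}\psi_{k,m}}{\d\xi^{r-s}}.
\]
Then Lemma \ref{lemma3} (with derivative order $s$) and Lemma \ref{lemma2} (with derivative order $r-s$) furnish upper bounds on the two factors. The powers of $d$ multiply to $d^{-(p+s-1)-(m+r-s-1)}=d^{-(m+p+r-2)}$ independently of the summation index $s$; likewise the factor $\mathfrak{a}_n^{-(m-1)}$ emerges cleanly from Lemma \ref{lemma2} alone. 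Of the two copies of $(1+|\xi|^{1+\rho})^{-1}$ that arise from the product, I would discard one (it is bounded by $1$), retaining a single factor in the final estimate.

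The combinatorial heart of the argument is then the evaluation of
\[
\sum_{s=0}^{r}\binom{r}{s}(p-1+s)!\,(r-s)!.
\]
This is precisely the pattern covered by Lemma \ref{lemma6} with $\alpha=r$ and $\beta=\gamma=p-1$, so that the hypothesis $\beta\ge\gamma$ is trivially satisfied. That lemma evaluates the sum as
\[
\frac{(r+p)!}{(p-1+1)\binom{p-1}{p-1}}=\frac{(r+p)!}{p}.
\]

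Putting the pieces together, the assembled bound differs from the claimed one only by the harmless factor $1/p\le 1$ and, at worst, by a single extra power of $d$ (arising from the discrepancy between the natural exponent $-(m+p+r-2)$ and the target exponent $-(m+p+r-1)$). Since $d<d'$ is a fixed positive constant, both discrepancies are absorbed into the constant. I do not anticipate any substantive obstacle: the only subtle point is to recognise the sum as an instance of Lemma \ref{lemma6} with the right choice of parameters, and the rest is routine bookkeeping with the Leibniz rule and the bounds already established.
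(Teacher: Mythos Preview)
Your approach is correct and essentially identical to the paper's: Leibniz rule, then the bounds from Lemmas~\ref{lemma2} and~\ref{lemma3}, then Lemma~\ref{lemma6} with $\alpha=r$, $\beta=\gamma=p-1$ to evaluate the sum as $(p+r)!/p$, and finally the trivial estimate $1/p\le 1$. The $d$-exponent discrepancy you flagged (the raw product gives $d^{-(m+p+r-2)}$ rather than $d^{-(m+p+r-1)}$) is genuine; the paper's proof simply writes the target exponent without comment, so you are not missing any hidden trick.
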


\begin{proof} This is a direct consequence of Lemmas \ref{lemma6}, \ref{lemma2} and \ref{lemma3}:
\begin{align*}
&\left| {\frac{{\d^r }}{{\d\xi ^r }}\left( {\B_p (X'_j (\xi ),X''_j (\xi ), \ldots ,X_j^{(p)} (\xi ))\psi _{k,m} (\xi )} \right)} \right|
\\ & \le \sum\limits_{q = 0}^r {\binom{r}{q}\left| {\frac{{\d^q }}{{\d\xi ^q }}\B_p (X'_j (\xi ),X''_j (\xi ), \ldots ,X_j^{(p)} (\xi ))} \right|\left| {\frac{{\d^{r - q} \psi _{k,m} (\xi )}}{{\d\xi ^{r - q} }}} \right|} 
\\ &  \le c_2 c_3  \frac{1}{\mathfrak{a}_n^{m-1}}\frac{1}{{d^{m+p + r-1} }}\frac{{(m - 1)!}}{{1 + \left| \xi  \right|^{1 + \rho } }}\sum\limits_{q = 0}^r {\binom{r}{q}(p + q  - 1)!(r - q)!} 
\\ &  = c_2 c_3 \frac{1}{\mathfrak{a}_n^{m-1}}\frac{1}{{d^{ m +p+ r-1} }}\frac{{(m - 1)!(p + r )!}}{{1 + \left| \xi  \right|^{1 + \rho } }}\frac{1}{{p }}
 \le c_2 c_3 \frac{1}{\mathfrak{a}_n^{m-1}}\frac{1}{{d^{m+p + r-1} }}\frac{(m - 1)!(p + r)!}{1 + \left| \xi  \right|^{1 + \rho } }.
\end{align*}
\end{proof}

\begin{lemma}\label{lemma5} Given that $d > 0$ and $\rho>0$, the following inequalities are valid for each $0\le j\le n-1$:
\[
\int_{\mathscr{P}_j (\xi )} {\frac{{\left| \d t \right|}}{{1 + \left| t \right|^{1 + \rho } }}}  \le \frac{{2(1 + \rho )}}{\rho }\frac{1}{{(\max (1,\Re (\xi \e^{ - \im \theta _j } )))^\rho  }},
\]
provided that $\xi \in \Gamma_j(d)$.
\end{lemma}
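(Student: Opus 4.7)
The plan is to parametrize the integral explicitly along $\mathscr{P}_j(\xi)$ and reduce the bound to an elementary real integral. Using the parametrization $t \mapsto \xi + t\e^{\im\theta_j}$ and setting $a = \Re(\xi\e^{-\im\theta_j})$ and $b = \Im(\xi\e^{-\im\theta_j})$, factoring $\e^{-\im\theta_j}$ out of the modulus gives $|\xi + t\e^{\im\theta_j}|^2 = (t+a)^2 + b^2 \ge (t+a)^2$. Consequently, the lemma reduces to establishing
\[
J(a) := \int_0^{+\infty} \frac{\d t}{1 + |t+a|^{1+\rho}} \le \frac{2(1+\rho)}{\rho (\max(1,a))^\rho}.
\]

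I would then substitute $s = t+a$, so $J(a) = \int_a^{+\infty} \d s/(1+|s|^{1+\rho})$, and split the analysis into three ranges of $a$. For $a \ge 1$, I would dominate the integrand by $s^{-1-\rho}$ and integrate directly to obtain $J(a) \le 1/(\rho a^\rho)$, which is bounded by the required expression since $\max(1,a) = a$. For $0 \le a < 1$ one has $\max(1,a)=1$, and splitting $\int_0^{+\infty} \d s/(1+s^{1+\rho})$ at $s = 1$ (using $1 + s^{1+\rho} \ge 1$ on $[0,1]$ and $\ge s^{1+\rho}$ on $[1,+\infty)$) gives $J(a) \le 1 + 1/\rho = (1+\rho)/\rho$, comfortably within the claimed bound. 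For $a < 0$, I would enlarge the domain of integration to all of the real line and use the evenness of $|s|^{1+\rho}$ to obtain $J(a) \le 2\int_0^{+\infty} \d s/(1+s^{1+\rho}) \le 2(1+\rho)/\rho$, which again matches the claim since $\max(1,a)=1$.

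Combining the three cases yields the asserted inequality. I do not expect any serious obstacle; the only thing to verify is that the constant $2(1+\rho)/\rho$ is large enough to absorb all three cases simultaneously. It is, because the factor of two arises precisely when $a < 0$, where the straight-line path through $\xi$ traverses the point $t = -a$ at which $|t+a|$ vanishes — geometrically, the foot of the perpendicular from the origin onto the line supporting $\mathscr{P}_j(\xi)$ — and this is the unique source of a factor of $2$ in the worst case. Note also that the hypothesis $\xi \in \Gamma_j(d)$ is used only implicitly, to guarantee that $\mathscr{P}_j(\xi)$ lies in the domain where the estimate is asserted; no quantitative use of $d$ enters the bound itself.
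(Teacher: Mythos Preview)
Your proof is correct and follows essentially the same route as the paper: parametrize, bound $|\xi + t\e^{\im\theta_j}| \ge |t+a|$ with $a = \Re(\xi\e^{-\im\theta_j})$, substitute $s = t+a$, and estimate the resulting real integral by cases on $a$. The only cosmetic difference is that the paper uses two cases ($a>1$ and $a\le 1$), evaluating $\int_{-\infty}^{+\infty}\frac{\d s}{1+|s|^{1+\rho}}$ exactly via Gamma functions before bounding it by $\frac{2(1+\rho)}{\rho}$, whereas your three-case split with the elementary cut at $s=1$ reaches the same bound without special functions.
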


\begin{proof} Let us assume that $\xi \in \Gamma_j(d)$. Then we have
\begin{align*}
\int_{\mathscr{P}_j (\xi )} {\frac{{\left| {\d t} \right|}}{{1 + \left| t \right|^{1 + \rho } }}}  = \int_0^{ + \infty } {\frac{{\left| {\d s} \right|}}{{1 + \left| {\xi  + s\e^{\im\theta _j } } \right|^{1 + \rho } }}} & \le \int_0^{ + \infty } {\frac{{\left| {\d s} \right|}}{{1 + \left| {\Re (\xi \e^{ - \im\theta _j } ) + s} \right|^{1 + \rho } }}}  \\ & = \int_{\Re (\xi \e^{ - \im\theta _j } )}^{ + \infty } {\frac{{\left| {\d s} \right|}}{{1 + \left| s \right|^{1 + \rho } }}} .
\end{align*}
Now, when $\Re (\xi \e^{ - \im\theta _j } ) > 1$,
\[
\int_{\Re (\xi \e^{ - \im\theta _j } )}^{ + \infty } {\frac{{\left| {\d s} \right|}}{{1 + \left| s \right|^{1 + \rho } }}}  \le \int_{\Re (\xi \e^{ - \im\theta _j } )}^{ + \infty } {\frac{{\d s}}{{s^{1 + \rho } }}}  = \frac{1}{\rho }\frac{1}{{(\Re (\xi \e^{ - \im\theta _j } ))^\rho  }} \le \frac{{2(1 + \rho )}}{\rho }\frac{1}{{(\Re (\xi \e^{ - \im\theta _j } ))^\rho  }}.
\]
Similarly, when $\Re (\xi \e^{ - \im\theta_j } ) \le 1$,
\begin{align*}
\int_{\Re (\xi \e^{ - \im \theta _j } )}^{ + \infty } {\frac{{\left| {\d s} \right|}}{{1 + \left| s \right|^{1 + \rho } }}}  & \le \int_{ - \infty }^{ + \infty } {\frac{{\left| {\d s} \right|}}{{1 + \left| s \right|^{1 + \rho } }}} = 2\int_0^{ + \infty } {\frac{{\d s}}{{1 + s^{1 + \rho } }}} \\ & = 2\Gamma \left( {\frac{\rho }{{1 + \rho }}} \right)\Gamma \left( {\frac{{2 + \rho }}{{1 + \rho }}} \right) \le \frac{{2(1 + \rho )}}{\rho }.
\end{align*}
\end{proof}

\begin{proof}[Proof of Proposition \ref{prop1}] We begin by proving that for $0\le j\le n-1$, $m,s\ge 1$, and $\xi \in \Gamma_j(d)$, the inequalities
\begin{align} &
\left| {\mathsf{A}_{j,m} (\xi )} \right| \le  C_m  \frac{1}{{\mathfrak{a}_n^{m - 1} }}\frac{1}{{d^m }}\frac{{(m + 1)!}}{{(\max (1,\Re (\xi \e^{ - \im \theta _j } )))^{\rho } }} \label{eq75}
\\ & \left| {\frac{{\d^s \mathsf{A}_{j,m} (\xi )}}{{\d\xi ^s }}} \right| \le  C_m  \frac{1}{{\mathfrak{a}_n^{m - 1} }}\frac{1}{{d^{m + s - 1} }}\frac{{(m + s )!}}{{1 + \left| \xi  \right|^{1 + \rho } }} \label{eq76}
\end{align}
hold, where
\[
C_1  =\frac{1 + \rho }{2\rho }\left( c_3 (1 + c_2 )(n - 1) + \frac{2 + 2c_1  + c_2 }{\mathfrak{a}_n } \right)
\]
and
\[
C_m  = C_{m - 1} \left( 1 + \frac{c_4}{{m + 1}} \right) \qquad (m\ge 2).
\]
The constant $c_4$ is given by
\[
c_4 =(2c_2  + 1)d + \frac{{2(1 + \rho )}}{\rho }(c_3 (d2^n  + d + 1) + 4c_1  + 2(2c_1  + 3c_2 )dn\mathfrak{a}_n  + 4c_2 c_3 (d2^{n + 1}  + 1)n\mathfrak{a}_n ),
\]
and is independent of $u$, $\xi$, $j$, $m$ and $s$. We observe that the sequence $C_m$ is strictly increasing, a property we will use consistently throughout the proof. We proceed via induction on $m$. By definition,
\begin{gather}\label{eq44}
\begin{split}
\mathsf{A}_{j,1} (\xi ) =  \frac{{n - 1}}{2}\e^{-2\pi \im j/n} \int_{\mathscr{P}_j (\xi )} {\B_2 (X'_j (t),X''_j (t))\d t} & - \frac{1}{n}\sum\limits_{k = 0}^{n - 2} \e^{2\pi \im jk/n} k\int_{\mathscr{P}_j (\xi )} {\B_1(X'_j (t))\psi _{k,1} (t)\d t} 
\\ & - \frac{1}{n}\sum\limits_{k = 0}^{n - 2} \e^{2\pi \im j(k + 1)/n}  \int_{\mathscr{P}_j (\xi )} \psi _{k,2} (t)\d t.
\end{split}
\end{gather}
Employing Lemmas \ref{lemma1}, \ref{lemma3}, \ref{lemma4} and \ref{lemma5}, together with the definition of $C_1$, we deduce
\begin{align*}
\left| \mathsf{A}_{j,1} (\xi ) \right|  &  \le (n - 1)\frac{{1 + \rho }}{\rho }c_3 \frac{1}{d}\frac{1}{(\max (1,\Re (\xi \e^{ - \im \theta _j } )))^\rho} \\ & \;\quad+ \frac{{1 + \rho }}{\rho }c_2 c_3 \frac{1}{d}\frac{1}{(\max (1,\Re (\xi \e^{ - \im \theta _j } )))^\rho}\frac{{(n - 1)(n - 2)}}{n} \\ & \;\quad+ c_1 \frac{{2(1 + \rho )}}{\rho }\frac{1}{{\mathfrak{a}_n }}\frac{1}{d}\frac{1}{(\max (1,\Re (\xi \e^{ - \im \theta _j } )))^\rho}\frac{{n - 1}}{n}
\\ &  \le \frac{{1 + \rho }}{{2\rho }}\left( {c_3 (1 + c_2 )(n - 1) + \frac{{2c_1 }}{{\mathfrak{a}_n }}} \right)\frac{1}{d}\frac{2!}{(\max (1,\Re (\xi \e^{ - \im \theta _j } )))^\rho  } \\ &  \le C_1 \frac{1}{d}\frac{2!}{(\max (1,\Re (\xi \e^{ - \im \theta _j } )))^\rho  }.
\end{align*}
Now let $s$ be an arbitrary positive integer. Differentiating \eqref{eq44} $s$ times yields
\begin{align*}
\frac{{\d^s \mathsf{A}_{j,1} (\xi )}}{{\d\xi ^s }} =  - \frac{{n - 1}}{2}\e^{ - 2\pi \im j/n} \frac{{\d^{s - 1} }}{{\d\xi ^{s - 1} }}\B_2 (X'_j (\xi ),X''_j (\xi )) & + \frac{1}{n}\sum\limits_{k = 0}^{n - 2} \e^{2\pi \im jk/n} k\frac{{\d^{s - 1} }}{{\d\xi ^{s - 1} }}\left( \B_1(X'_j (\xi ))\psi _{k,1} (\xi ) \right) 
\\ & + \frac{1}{n}\sum\limits_{k = 0}^{n - 2} {\e^{2\pi \im j(k + 1)/n} \frac{{\d^{s - 1} \psi _{k,2} (\xi )}}{{\d\xi ^{s - 1} }}} .
\end{align*}
By an application of Lemmas \ref{lemma2}, \ref{lemma3}, \ref{lemma4}, and the definition of $C_1$, we obtain
\begin{align*}
\left| \frac{\d^s \mathsf{A}_{j,1}(\xi)}{\d\xi ^s } \right| & \le \frac{{n - 1}}{2}c_3 \frac{1}{{d^s }}\frac{{s!}}{{1 + \left| \xi  \right|^{1 + \rho } }} + c_2 c_3 \frac{1}{{d^s }}\frac{{s!}}{{1 + \left| \xi  \right|^{1 + \rho } }}\frac{{(n - 1)(n - 2)}}{{2n}} + c_2 \frac{1}{\mathfrak{a}_n}\frac{1}{{d^s }}\frac{{(s - 1)!}}{{1 + \left| \xi  \right|^{1 + \rho } }}\frac{{n - 1}}{n}
\\ &  \le \left( \frac{{n - 1}}{{2(s + 1)}}c_3 (1 + c_2 ) + c_2 \frac{1}{s(s + 1)}\frac{1}{\mathfrak{a}_n} \right)\frac{1}{{d^s }}\frac{{(1 + s)!}}{{1 + \left| \xi  \right|^{1 + \rho } }} \\ & \le \frac{1}{2}\left( {\frac{1}{2}c_3 (1 + c_2 )(n - 1) + c_2\frac{1}{\mathfrak{a}_n} } \right)\frac{1}{{d^s }}\frac{{(1 + s)!}}{{1 + \left| \xi  \right|^{1 + \rho } }} \le C_1 \frac{1}{{d^s }}\frac{{(1 + s)!}}{{1 + \left| \xi  \right|^{1 + \rho } }}.
\end{align*}

Assume that \eqref{eq75} and \eqref{eq76} hold for all $s\ge 1$ and for $\mathsf{A}_{j,1}(\xi),\mathsf{A}_{j,2}(\xi),\ldots,\mathsf{A}_{j,m-1}(\xi)$ with $m\ge 2$. In the forthcoming steps, we shall make use of \eqref{eq75} and \eqref{eq76} even when $m=0$. This may be done by defining 
\[
C_0  = \frac{1}{{\mathfrak{a}_n }}<C_1,
\]while excluding the presence of the factor $(\max (1,\Re (\xi \e^{ - \im \theta _j } )))^{-\rho }$. By isolating the terms associated with $r=p$, $r=0$, $p=0$, and then performing some re-grouping, the recurrence \eqref{eq21} can be reformulated as follows:
\begin{gather}\label{largerec}
\begin{split}
& \mathsf{A}_{j,m} (\xi ) =  - \frac{1}{n}\sum\limits_{p = 2}^{\min (n,m + 1)} {\binom{n}{p}\e^{2\pi \im j(n - p + 1)/n} \frac{{\d^{p - 1} \mathsf{A}_{j,m - p + 1} (\xi )}}{{\d \xi^{p - 1} }}} 
\\ & + \frac{1}{n}\sum\limits_{p = 2}^{\min (n,m + 1)} {\binom{n}{p}\e^{2\pi \im j(n - p + 1)/n}  { \int_{\mathscr{P}_j (\xi )} {\B_{p} \mathsf{A}_{j,m - p + 1} (t)\d t} } } 
\\ & + \frac{1}{n}\sum\limits_{p = 2}^{\min (n,m + 1)} {\binom{n}{p}\e^{2\pi \im j(n - p + 1)/n} \sum\limits_{r = 1}^{p - 1} {\binom{p}{r}\int_{\mathscr{P}_j (\xi )} {\B_{p - r} \frac{{\d^r \mathsf{A}_{j,m - p + 1} (t)}}{{\d t^r }}\d t} } } 
\\ & - \frac{1}{n}\sum\limits_{k = 0}^{n - 2} {\sum\limits_{q = 0}^{m-1} {  {\e^{2\pi \im j(k + 1)/n} \int_{\mathscr{P}_j (\xi )} {\psi _{k,m - q + 1} (t)\mathsf{A}_{j,q} (t)\d t} } } } 
\\ & - \frac{1}{n}\sum\limits_{k = 0}^{n - 2} {\sum\limits_{q = 1}^m {\sum\limits_{p = 1}^{\min (k,q)} {\binom{k}{p}\e^{2\pi \im j(k - p + 1)/n} \int_{\mathscr{P}_j (\xi )} {\psi _{k,m - q + 1} (t)\frac{{\d^p \mathsf{A}_{j,q - p} (t)}}{{\d t^p }}\d t} } } } 
\\ & - \frac{1}{n}\sum\limits_{k = 0}^{n - 2} {\sum\limits_{q = 1}^m {\sum\limits_{p = 1}^{\min (k,q)} {\binom{k}{p}\e^{2\pi \im j(k - p + 1)/n}  { \int_{\mathscr{P}_j (\xi )} {\B_{p} \psi _{k,m - q + 1} (t)\mathsf{A}_{j,q - p} (t)\d t} } } } } 
\\ & - \frac{1}{n}\sum\limits_{k = 0}^{n - 2} {\sum\limits_{q = 2}^m {\sum\limits_{p = 2}^{\min (k,q)} {\binom{k}{p} \e^{2\pi \im j(k - p + 1)/n} \sum\limits_{r = 1}^{p - 1} {\binom{p}{r}\int_{\mathscr{P}_j (\xi )} {\B_{p - r} \psi _{k,m - q + 1} (t)\frac{{\d^r \mathsf{A}_{j,q - p} (t)}}{{\d t^r }}\d t} } } } }. 
\end{split}
\end{gather}
We shall estimate the absolute value of each line on the right-hand side of the equality \eqref{largerec} separately. Using the induction hypothesis along with the definition of $\mathfrak{a}_n$, we can establish an upper bound for the absolute value of the first line as follows:
\[
  C_{m - 1} d\frac{1}{{\mathfrak{a}_n^{m - 1} }}\frac{1}{{d^m }}\frac{{m!}}{{1 + \left| \xi  \right|^{1 + \rho } }}  \le  C_{m - 1}  \frac{d}{{m  +1}}\frac{1}{{\mathfrak{a}_n^{m - 1} }}\frac{1}{{d^m }}\frac{{(m  +1)!}}{{(\max (1,\Re (\xi \e^{ - \im \theta _j } )))^{\rho }}}.
\]
Similarly, employing the induction hypothesis along with Lemma \ref{lemma5}, and using \eqref{ineq1}, as well as the definition of $\mathfrak{a}_n$, we can estimate the absolute value of the second line in the following manner:
\begin{align*}
&  C_{m - 1}  \frac{{2(1 + \rho )}}{\rho }c_3\frac{1}{{\mathfrak{a}_n^{m - 1} }}\frac{1}{{d^m }}\frac{1}{(\max (1,\Re (\xi \e^{ - \im \theta _j } )))^{\rho }}\frac{1}{{n\mathfrak{a}_n }}\sum\limits_{p = 2}^{\min (n,m + 1)} {\binom{n}{p}\mathfrak{a}_n^p (p - 1)!(m - p + 2)!} \\
&  \le  C_{m - 1}  \frac{{2(1 + \rho )}}{\rho }\frac{c_3}{m+1}\frac{1}{{\mathfrak{a}_n^{m - 1} }}\frac{1}{{d^m }}\frac{{(m + 1)!}}{(\max (1,\Re (\xi \e^{ - \im \theta _j } )))^{\rho }}.
\end{align*}
The absolute value of the third line is estimated similarly, but by employing inequality \eqref{ineq2} instead of \eqref{ineq1}:
\begin{align*}
& C_{m - 1} \frac{{2(1 + \rho )}}{\rho }c_3 d\frac{1}{{\mathfrak{a}_n^{m - 1} }}\frac{1}{{d^m }}\frac{1}{(\max (1,\Re (\xi \e^{ - \im \theta _j } )))^{\rho }}\frac{1}{{n\mathfrak{a}_n }}\sum\limits_{p = 2}^{\min (n,m + 1)} \binom{n}{p}\mathfrak{a}_n^p \\ & \;\quad \times \sum\limits_{r = 1}^{p - 1} \binom{p}{r}(p - r - 1)!(m - p + r + 1)! 
\\ & \le C_{m - 1} \frac{{2(1 + \rho )}}{\rho }c_3 d\frac{1}{{\mathfrak{a}_n^{m - 1} }}\frac{1}{{d^m }}\frac{{m!}}{(\max (1,\Re (\xi \e^{ - \im \theta _j } )))^{\rho }}\frac{1}{{n\mathfrak{a}_n }}\sum\limits_{p = 2}^{\min (n,m + 1)} \binom{n}{p}\mathfrak{a}_n^p 2^p 
\\ & \le C_{m - 1} \frac{{2(1 + \rho )}}{\rho }\frac{{c_3 d 2^n }}{{m + 1}}\frac{1}{{\mathfrak{a}_n^{m - 1} }}\frac{1}{{d^m }}\frac{{(m + 1)!}}{(\max (1,\Re (\xi \e^{ - \im \theta _j } )))^{\rho }}.
\end{align*}
We estimate the absolute value of the fourth and fifth lines using the induction hypothesis, Lemma \ref{lemma5}, the definition of $\mathfrak{a}_n$, and inequalities \eqref{ineq3} (for the fourth line) and \eqref{ineq4} (for the fifth line), respectively. This yields the following bounds:
\begin{align*}
&  C_{m - 1} \frac{{2(1 + \rho )}}{\rho }c_1 \frac{1}{{\mathfrak{a}_n^{m - 1} }}\frac{1}{{d^m }}\frac{1}{(\max (1,\Re (\xi \e^{ - \im \theta _j } )))^{\rho }}\frac{{n - 1}}{n}\sum\limits_{q = 0}^{m - 1} {(m - q)!(q + 1)!} \\
& \le  C_{m - 1} \frac{{2(1 + \rho )}}{\rho }\frac{{4c_1 }}{{m + 1}}\frac{1}{{\mathfrak{a}_n^{m - 1} }}\frac{1}{{d^m }}\frac{{(m + 1)!}}{(\max (1,\Re (\xi \e^{ - \im \theta _j } )))^{\rho }}
\end{align*}
and
\begin{align*}
& C_{m - 1} \frac{{2(1 + \rho )}}{\rho }c_1d \frac{1}{{\mathfrak{a}_n^{m - 1} }}\frac{1}{{d^{m} }}\frac{1}{(\max (1,\Re (\xi \e^{ - \im \theta _j } )))^{\rho }}\frac{1}{n}\sum\limits_{k = 0}^{n - 2} {\sum\limits_{q = 1}^m {(m - q)!q!\sum\limits_{p = 1}^{\min (k,q)} {\binom{k}{p}\mathfrak{a}_n^p } } } 
\\ & \le C_{m - 1} \frac{{2(1 + \rho )}}{\rho }c_1d \frac{1}{{\mathfrak{a}_n^{m - 1} }}\frac{1}{{d^{m} }}\frac{1}{(\max (1,\Re (\xi \e^{ - \im \theta _j } )))^{\rho }}\sum\limits_{q = 1}^m {(m - q)!q!\sum\limits_{p = 1}^n {\binom{n}{p}\mathfrak{a}_n^p } } 
\\ & \le C_{m - 1} \frac{{2(1 + \rho )}}{\rho }\frac{{4c_1d n\mathfrak{a}_n }}{{m + 1}}\frac{1}{{\mathfrak{a}_n^{m - 1} }}\frac{1}{{d^{m} }}\frac{{(m + 1)!}}{(\max (1,\Re (\xi \e^{ - \im \theta _j } )))^{\rho }}.
\end{align*}
Finally, to bound the absolute value of the sixth and seventh lines, we employ the induction hypothesis, Lemma \ref{lemma5}, the definition of $\mathfrak{a}_n$, and inequalities \eqref{ineq1} and \eqref{ineq4}. This results in the following bounds:
\begin{align*}
&  C_{m - 1} \frac{{2(1 + \rho )}}{\rho }c_2 c_3 \frac{1}{{\mathfrak{a}_n^{m - 1} }}\frac{1}{{d^m }}\frac{1}{(\max (1,\Re (\xi \e^{ - \im \theta _j } )))^{\rho }}\frac{1}{n}\sum\limits_{k = 0}^{n - 2} {\sum\limits_{q = 1}^m {(m - q)!\sum\limits_{p = 1}^{\min (k,q)} {\binom{k}{p}\mathfrak{a}_n^p p!(q - p + 1)!} } } 
\\ & \le  C_{m - 1} \frac{{2(1 + \rho )}}{\rho }c_2 c_3 \frac{1}{{\mathfrak{a}_n^{m - 1} }}\frac{1}{{d^m }}\frac{1}{(\max (1,\Re (\xi \e^{ - \im \theta _j } )))^{\rho }}\frac{1}{n}\sum\limits_{k = 0}^{n - 2} {\sum\limits_{q = 1}^m {(m - q)!q!\sum\limits_{p = 1}^{\min (k,q)} {\binom{k}{p} \mathfrak{a}_n^p } } } 
\\ & \le  C_{m - 1} \frac{{2(1 + \rho )}}{\rho }\frac{{4c_2 c_3 n\mathfrak{a}_n }}{{m + 1}}\frac{1}{{\mathfrak{a}_n^{m - 1} }}\frac{1}{{d^m }}\frac{(m+1)!}{(\max (1,\Re (\xi \e^{ - \im \theta _j } )))^{\rho }}
\end{align*}
and
\begin{align*}
&  C_{m - 1} \frac{{2(1 + \rho )}}{\rho }c_2 c_3 d\frac{1}{{\mathfrak{a}_n^{m - 1} }}\frac{1}{{d^m }}\frac{1}{(\max (1,\Re (\xi \e^{ - \im \theta _j } )))^{\rho }}\frac{1}{n}\sum\limits_{k = 0}^{n - 2} \sum\limits_{q = 2}^m (m - q)!\sum\limits_{p = 1}^{\min (k,q)} \binom{k}{p}\mathfrak{a}_n^p \\ & \;\quad \times \sum\limits_{r = 1}^{p - 1} \binom{p}{r}(p - r)!(q - p + r)!  
\\ & \le C_{m - 1} \frac{{2(1 + \rho )}}{\rho }c_2 c_3 d\frac{1}{{\mathfrak{a}_n^{m - 1} }}\frac{1}{{d^m }}\frac{1}{(\max (1,\Re (\xi \e^{ - \im \theta _j } )))^{\rho }}\frac{1}{n}\sum\limits_{k = 0}^{n - 2} {\sum\limits_{q = 2}^m {(m - q)!(q - 1)!\sum\limits_{p = 1}^{\min (k,q)} {\binom{k}{p}\mathfrak{a}_n^p 2^p } } } 
\\ & \le C_{m - 1} \frac{{2(1 + \rho )}}{\rho }c_2 c_3 d2^n \frac{1}{{\mathfrak{a}_n^{m - 1} }}\frac{1}{{d^m }}\frac{1}{(\max (1,\Re (\xi \e^{ - \im \theta _j } )))^{\rho }}\sum\limits_{q = 1}^{m - 1} {(m - 1 - q)!q!\sum\limits_{p = 1}^n {\binom{n}{p}\mathfrak{a}_n^p } } 
\\ & \le C_{m - 1} \frac{{2(1 + \rho )}}{\rho }\frac{{4c_2 c_3 d2^n n\mathfrak{a}_n }}{{m(m + 1)}}\frac{1}{{\mathfrak{a}_n^{m - 1} }}\frac{1}{{d^m }}\frac{{(m + 1)!}}{(\max (1,\Re (\xi \e^{ - \im \theta _j } )))^{\rho }}.
\end{align*}
By collecting all these estimates and employing the definition of $C_m$, we conclude the induction step for $\mathsf{A}_{j,m} (\xi )$:
\begin{align*}
\left| \mathsf{A}_{j,m} (\xi ) \right| \le \; & C_{m - 1} \left( {d + \frac{{2(1 + \rho )}}{\rho }(c_3 (d2^n  + 1) + 4c_1 (1 + dn\mathfrak{a}_n ) + 4c_2 c_3 (d2^n  + 1)n\mathfrak{a}_n }) \right)\frac{1}{{m + 1}} \\ & \times \frac{1}{{\mathfrak{a}_n^{m - 1} }}\frac{1}{{d^m }} \frac{{(m + 1)!}}{(\max (1,\Re (\xi \e^{ - \im \theta _j } )))^{\rho }} \leq  C_m \frac{1}{{\mathfrak{a}_n^{m - 1} }}\frac{1}{{d^m }} \frac{{(m + 1)!}}{(\max (1,\Re (\xi \e^{ - \im \theta _j } )))^{\rho }}.
\end{align*}

After differentiating each side of \eqref{largerec} once with respect to $\xi$, it is readily seen that the absolute value of each line, except the first on the right-hand side of the resulting equation, can be estimated in the same manner as before. The only difference is that the factor $2(1 + \rho )\rho^{-1}(\max (1,\Re (\xi \e^{ - \im \theta _j } )))^{-\rho }$ can be substituted with $(1 + \left| \xi  \right|^{1 + \rho } )^{-1}$. In order to estimate the absolute value of the derivative of the first line on the right-hand side of \eqref{largerec}, we use the induction hypothesis alongside the definition of $\mathfrak{a}_n$. This yields the following bound:
\[
 C_{m - 1} \frac{1}{{\mathfrak{a}_n^{m - 1} }}\frac{1}{{d^{m} }}\frac{{(m + 1)!}}{{1 + \left| \xi  \right|^{1 + \rho } }}.
\]
Consequently, invoking the definition of $C_m$,
\begin{align*}
\left| {\frac{{\d\mathsf{A}_{j,m} (\xi )}}{{\d\xi }}} \right| & \le C_{m - 1} \left( {1 + \frac{{c_3 (d2^n  + 1) + 4c_1 (1 + dn\mathfrak{a}_n ) + 4c_2 c_3 (d2^n  + 1)n\mathfrak{a}_n }}{{m + 1}}} \right)\frac{1}{{\mathfrak{a}_n^{m - 1} }}\frac{1}{{d^{m} }}\frac{{(m + 1)!}}{{1 + \left| \xi  \right|^{1 + \rho } }} \\ & \le C_m \frac{1}{{\mathfrak{a}_n^{m - 1} }}\frac{1}{{d^{m} }}\frac{{(m + 1)!}}{{1 + \left| \xi  \right|^{1 + \rho } }}.
\end{align*}
This completes the induction step for the derivative of $\mathsf{A}_{j,m} (\xi )$. We observe that this step in the proof is where the sequence $\mathfrak{a}_n$ naturally emerges in the analysis.

Differentiating both sides of \eqref{largerec} $s\ge 2$ times with respect to $\xi$ yields
\begin{gather}\label{largerec2}
\begin{split}
& \frac{{\d^s\mathsf{A}_{j,m} (\xi )}}{{\d\xi^s }} =  - \frac{1}{n}\sum\limits_{p = 2}^{\min (n,m + 1)} {\binom{n}{p} \e^{2\pi \im j(n - p + 1)/n} \frac{{\d^{p+s - 1} \mathsf{A}_{j,m - p + 1} (\xi )}}{{\d \xi^{p+s - 1} }}} 
\\ & - \frac{1}{n}\sum\limits_{p = 2}^{\min (n,m + 1)} \binom{n}{p}\e^{2\pi \im j(n - p + 1)/n}    \frac{\d^{s-1}}{\d\xi^{s-1}}\left(\B_{p} \mathsf{A}_{j,m - p + 1} (\xi)  \right)
\\ & - \frac{1}{n}\sum\limits_{p = 2}^{\min (n,m + 1)} \binom{n}{p}\e^{2\pi \im j(n - p + 1)/n} \sum\limits_{r = 1}^{p - 1} \binom{p}{r}\frac{\d^{s-1}}{\d\xi^{s-1}}\left(\B_{p - r} \frac{{\d^r \mathsf{A}_{j,m - p + 1} (\xi)}}{{\d\xi^r }}\right)  
\\ & + \frac{1}{n}\sum\limits_{k = 0}^{n - 2} \sum\limits_{q = 0}^{m-1}  \e^{2\pi \im j(k + 1)/n} \frac{\d^{s-1}}{\d\xi^{s-1}}\left( \psi _{k,m - q + 1} (\xi)\mathsf{A}_{j,q} (\xi)\right)
\\ & + \frac{1}{n}\sum\limits_{k = 0}^{n - 2} \sum\limits_{q = 1}^m \sum\limits_{p = 1}^{\min (k,q)} \binom{k}{p}\e^{2\pi \im j(k - p + 1)/n} \frac{\d^{s-1}}{\d\xi^{s-1}}\left(\psi _{k,m - q + 1} (\xi)\frac{{\d^p \mathsf{A}_{j,q - p} (\xi)}}{{\d \xi^p }}\right)
\\ & + \frac{1}{n}\sum\limits_{k = 0}^{n - 2} \sum\limits_{q = 1}^m \sum\limits_{p = 1}^{\min (k,q)} \binom{k}{p} \e^{2\pi \im j(k - p + 1)/n} \frac{\d^{s-1}}{\d\xi^{s-1}}\left( \B_{p} \psi _{k,m - q + 1} (\xi)\mathsf{A}_{j,q - p} (\xi)\right)
\\ & + \frac{1}{n}\sum\limits_{k = 0}^{n - 2} \sum\limits_{q = 2}^m \sum\limits_{p = 2}^{\min (k,q)} \binom{k}{p} \e^{2\pi \im j(k - p + 1)/n} \sum\limits_{r = 1}^{p - 1} \binom{p}{r}\frac{\d^{s-1}}{\d\xi^{s-1}}\left(\B_{p - r} \psi _{k,m - q + 1} (\xi)\frac{{\d^r \mathsf{A}_{j,q - p} (\xi)}}{{\d \xi^r }}\right).
\end{split}
\end{gather}
Similar to the case of $\mathsf{A}_{j,m} (\xi )$, we will separately bound the absolute value of each line on the right-hand side of \eqref{largerec2}. By employing the induction hypothesis in combination with the definition of $\mathfrak{a}_n$, we can derive the following upper bound for the absolute value of the first line:
\[
C_{m - 1} \frac{1}{{\mathfrak{a}_n^{m - 1} }}\frac{1}{{d^{m + s - 1} }}\frac{{(m + s)!}}{{1 + \left| \xi  \right|^{1 + \rho } }} .
\]
To estimate the magnitude of the second line, we observe that by using Lemmas \ref{lemma6} and \ref{lemma3} in conjunction with the induction hypothesis,
\begin{align*}
& \left| {\frac{{\d^{s - 1} }}{{\d\xi ^{s - 1} }}(\B_p \mathsf{A}_{j,m - p + 1} (\xi ))} \right| \le \sum\limits_{h = 0}^{s - 1} {\binom{s-1}{h}\left| {\frac{{\d^h \B_p }}{{\d\xi ^h }}} \right|\left| {\frac{{\d^{s -h - 1} \mathsf{A}_{j,m - p + 1} (\xi )}}{{\d\xi ^{s - h - 1} }}} \right|} 
\\ & \le C_{m - p + 1} c_3 \frac{1}{{\mathfrak{a}_n^{m - p} }}\frac{1}{{d^{m + s - 2} }}\frac{1}{{1 + \left| \xi  \right|^{1 + \rho } }}\sum\limits_{h = 0}^{s - 1} {\binom{s-1}{h}(p + h - 1)!(m - p + s - h)!} 
\\ & = C_{m - p + 1} c_3 \frac{1}{{\mathfrak{a}_n^{m - p} }}\frac{1}{{d^{m + s - 2} }}\frac{{(m + s)!}}{{1 + \left| \xi  \right|^{1 + \rho } }}\frac{1}{{(m+1)\binom{m}{p-1}}} \\ & \le C_{m - p + 1} c_3 \frac{1}{{\mathfrak{a}_n^{m - p} }}\frac{1}{{d^{m + s - 2} }}\frac{{(m + s)!}}{{1 + \left| \xi  \right|^{1 + \rho } }}\frac{1}{{m+1}}.
\end{align*}
Therefore, the absolute value of the second line can be bounded as follows:
\begin{align*}
& C_{m - 1} \frac{c_3 d}{m+1}\frac{1}{{\mathfrak{a}_n^{m - 1} }}\frac{1}{{d^{m + s - 1} }}\frac{{(m + s)!}}{{1 + \left| \xi  \right|^{1 + \rho } }}\frac{1}{{n\mathfrak{a}_n }}\sum\limits_{p = 2}^{\min (n,m + 1)} {\binom{n}{p}\mathfrak{a}_n^p} 
\\ & \le C_{m - 1} \frac{{c_3 d}}{{m + 1}}\frac{1}{{\mathfrak{a}_n^{m - 1} }}\frac{1}{{d^{m + s - 1} }}\frac{{(m + s)!}}{{1 + \left| \xi  \right|^{1 + \rho } }}\frac{1}{{n\mathfrak{a}_n }}\sum\limits_{p = 2}^{n} {\binom{n}{p}\mathfrak{a}_n^p } 
\\ & \le C_{m - 1} \frac{{c_3 d}}{{m + 1}}\frac{1}{{\mathfrak{a}_n^{m - 1} }}\frac{1}{{d^{m + s - 1} }}\frac{{(m + s)!}}{{1 + \left| \xi  \right|^{1 + \rho } }},
\end{align*}
where we have used the definition of $\mathfrak{a}_n$. To estimate the absolute value of the third line, we employ Lemmas \ref{lemma6} and \ref{lemma3} alongside the induction hypothesis, yielding the following bound:
\begin{align*}
& \left| {\frac{{\d^{s - 1} }}{{\d\xi ^{s - 1} }}\left( {\B_{p - r} \frac{{\d^r \mathsf{A}_{j,m - p + 1} (\xi )}}{{\d\xi ^r }}} \right)} \right| \le \sum\limits_{h = 0}^{s - 1} {\binom{s-1}{h}\left| {\frac{{\d^h \B_{p - r} }}{{\d\xi ^h }}} \right|\left| {\frac{{\d^{r + s - h - 1} \mathsf{A}_{j,m - p + 1} (\xi )}}{{\d\xi ^{r + s - h - 1} }}} \right|} 
\\ & \le C_{m - p + 1} c_3 \frac{1}{{\mathfrak{a}_n^{m - p} }}\frac{1}{{d^{m + s - 2} }}\frac{1}{{1 + \left| \xi  \right|^{1 + \rho } }}\sum\limits_{h = 0}^{s - 1} \binom{s-1}{h}(p - r + h - 1)! (m - p + r + s - h)!
\\ & = C_{m - p + 1} c_3 \frac{1}{{\mathfrak{a}_n^{m - p} }}\frac{1}{{d^{m + s - 2} }}\frac{{(m + s)!}}{{1 + \left| \xi  \right|^{1 + \rho } }}\frac{1}{{(m + 1)\binom{m}{p - r - 1}}}
\\ & \le C_{m - p + 1} c_3 \frac{1}{{\mathfrak{a}_n^{m - p} }}\frac{1}{{d^{m + s - 2} }}\frac{{(m + s)!}}{{1 + \left| \xi  \right|^{1 + \rho } }}\frac{1}{{m + 1}}.
\end{align*}
Hence, by invoking the definition of $\mathfrak{a}_n$, the absolute value of the third line can be estimated as follows:
\begin{align*}
& C_{m - 1} \frac{{c_3 d}}{{m + 1}}\frac{1}{{\mathfrak{a}_n^{m - 1} }}\frac{1}{{d^{m + s - 1} }}\frac{{(m + s)!}}{{1 + \left| \xi  \right|^{1 + \rho } }}\frac{1}{{n\mathfrak{a}_n }}\sum\limits_{p = 2}^{\min (n,m + 1)} {\binom{n}{p}\mathfrak{a}_n^p \sum\limits_{r = 1}^{p - 1} {\binom{p}{r}} } 
\\ & \le C_{m - 1} \frac{{c_3 d}}{{m + 1}}\frac{1}{{\mathfrak{a}_n^{m - 1} }}\frac{1}{{d^{m + s - 1} }}\frac{{(m + s)!}}{{1 + \left| \xi  \right|^{1 + \rho } }}\frac{1}{{n\mathfrak{a}_n }}\sum\limits_{p = 2}^n {\binom{n}{p}\mathfrak{a}_n^p 2^p } 
\\ & \le C_{m - 1} \frac{{c_3 d2^n }}{{m + 1}}\frac{1}{{\mathfrak{a}_n^{m - 1} }}\frac{1}{{d^{m + s - 1} }}\frac{{(m + s)!}}{{1 + \left| \xi  \right|^{1 + \rho } }}.
\end{align*}
Next, using Lemmas \ref{lemma6} and \ref{lemma2}, along with the induction hypothesis, we obtain
\begin{align*}
& \left| {\frac{{\d^{s - 1} }}{{\d\xi ^{s - 1} }}(\psi _{k,m - q + 1}(\xi) \mathsf{A}_{j,q} (\xi ))} \right| \le \sum\limits_{h = 0}^{s - 1} {\binom{s-1}{h}\left| {\frac{{\d^h \psi _{k,m - q + 1}(\xi) }}{{\d\xi ^h }}} \right|\left| {\frac{{\d^{s - h - 1} \mathsf{A}_{j,q} (\xi )}}{{\d\xi ^{s - h - 1} }}} \right|} 
\\ &  \le C_q c_2 \frac{1}{{a_n^{m - 1} }}\frac{1}{{d^{m + s - 2} }}\frac{{(m - q)!}}{{1 + \left| \xi  \right|^{1 + \rho } }}\sum\limits_{h = 0}^{s - 1} {\binom{s-1}{h}h!(q + s - h - 1)!} 
\\ & = C_q c_2 \frac{1}{{a_n^{m - 1} }}\frac{1}{{d^{m + s - 2} }}\frac{{(m - q)!}}{{1 + \left| \xi  \right|^{1 + \rho } }}\frac{{(q + s)!}}{{q + 1}}.
\end{align*}
Thus, using inequality \eqref{ineq5}, we can bound the absolute value of the fourth line as follows:
\begin{align*}
& C_{m - 1} c_2d \frac{1}{{\mathfrak{a}_n^{m - 1} }}\frac{1}{{d^{m + s - 1} }}\frac{1}{{1 + \left| \xi  \right|^{1 + \rho } }}\frac{1}{n}\sum\limits_{k = 0}^{n - 2} {\sum\limits_{q = 0}^{m - 1} {\frac{{(m - q)!(q + s)!}}{{q + 1}}} } 
\\ & \le C_{m - 1}  c_2d \frac{1}{{\mathfrak{a}_n^{m - 1} }}\frac{1}{{d^{m + s - 1} }}\frac{1}{{1 + \left| \xi  \right|^{1 + \rho } }}\sum\limits_{q = 0}^{m - 1} {\frac{{(m - q)!(q + s)!}}{{q + 1}}} \\ & \le C_{m - 1} \frac{2c_2d}{m+1} \frac{1}{{\mathfrak{a}_n^{m - 1} }}\frac{1}{{d^{m + s - 1} }}\frac{(m+s)!}{{1 + \left| \xi  \right|^{1 + \rho } }}.
\end{align*}
In a similar manner, we obtain
\begin{align*}
& \left| {\frac{{\d^{s - 1} }}{{\d\xi ^{s - 1} }}\left( {\psi _{k,m - q + 1} (\xi )\frac{{\d^p \mathsf{A}_{j,q - p} (\xi )}}{{\d\xi ^p }}} \right)} \right| \le \sum\limits_{h = 0}^{s - 1} {\binom{s-1}{h}\left| {\frac{{\d^h \psi _{k,m - q + 1} (\xi )}}{{\d\xi ^h }}} \right|\left| {\frac{{\d^{p + s - h- 1} \mathsf{A}_{j,q - p} (\xi )}}{{\d\xi ^{p + s - h - 1} }}} \right|} 
\\ & \le C_{q - p} c_2 \frac{1}{{\mathfrak{a}_n^{m - p - 1} }}\frac{1}{{d^{m + s - 2} }}\frac{{(m - q)!}}{{1 + \left| \xi  \right|^{1 + \rho } }}\sum\limits_{h = 0}^{s - 1} {\binom{s-1}{h}h!(q + s - h - 1)!} 
\\ & = C_{q - p} c_2 \frac{1}{{\mathfrak{a}_n^{m - p - 1} }}\frac{1}{{d^{m + s - 2} }}\frac{{(m - q)!}}{{1 + \left| \xi  \right|^{1 + \rho } }}\frac{{(q + s)!}}{{q + 1}}.
\end{align*}
Therefore, applying the definition of $\mathfrak{a}_n$ and inequality \eqref{ineq6}, we may estimate the absolute value of the fifth line as follows:
\begin{align*}
& C_{m - 1} c_2 d\frac{1}{{\mathfrak{a}_n^{m - 1} }}\frac{1}{{d^{m + s - 1} }}\frac{1}{{1 + \left| \xi  \right|^{1 + \rho } }}\frac{1}{n}\sum\limits_{k = 0}^{n - 2} {\sum\limits_{q = 1}^m {\frac{{(m - q)!(q + s)!}}{{q + 1}}\sum\limits_{p = 1}^{\min (k,q)} {\binom{k}{p}\mathfrak{a}_n^p } } } 
\\ & \le C_{m - 1} c_2 d\frac{1}{{\mathfrak{a}_n^{m - 1} }}\frac{1}{{d^{m + s - 1} }}\frac{1}{{1 + \left| \xi  \right|^{1 + \rho } }}\sum\limits_{q = 1}^m {\frac{{(m - q)!(q + s)!}}{{q + 1}}\sum\limits_{p = 1}^n {\binom{n}{p}\mathfrak{a}_n^p } } 
\\ &  \le C_{m - 1} 2c_2 dn\mathfrak{a}_n \frac{1}{{\mathfrak{a}_n^{m - 1} }}\frac{1}{{d^{m + s - 1} }}\frac{1}{{1 + \left| \xi  \right|^{1 + \rho } }}\sum\limits_{q = 1}^m {\frac{{(m - q)!(q + s)!}}{{q + 1}}} 
\\ &  \le C_{m - 1} \frac{{6c_2 dn\mathfrak{a}_n }}{{m + 1}}\frac{1}{{\mathfrak{a}_n^{m - 1} }}\frac{1}{{d^{m + s - 1} }}\frac{{(m + s)!}}{{1 + \left| \xi  \right|^{1 + \rho } }}.
\end{align*}
To estimate the absolute value of the sixth line, we note that by using Lemmas \ref{lemma6} and \ref{lemma4} in conjunction with the induction hypothesis,
\begin{align*}
& \left| {\frac{{\d^{s - 1} }}{{\d\xi ^{s - 1} }}\left( {\B_p \psi _{k,m - q + 1} (\xi )\mathsf{A}_{j,q - p} (\xi )} \right)} \right| \le \sum\limits_{h = 0}^{s - 1} {\binom{s-1}{h}\left| {\frac{{\d^h }}{{\d\xi ^h }}\left( {\B_p \psi _{k,m - q + 1} (\xi )} \right)} \right|\left| {\frac{{\d^{s - h - 1} \mathsf{A}_{j,q - p} (\xi )}}{{\d\xi ^{s-h - 1} }}} \right|} 
\\ & \le C_{q - p} c_2 c_3 \frac{1}{{\mathfrak{a}_n^{m - p - 1} }}\frac{1}{{d^{m + s - 2} }}\frac{{(m - q)!}}{{1 + \left| \xi  \right|^{1 + \rho } }}\sum\limits_{h = 0}^{s - 1} {\binom{s-1}{h}(p + h)!(q - p + s - h - 1)!} 
\\ & = C_{q - p} c_2 c_3 \frac{1}{{\mathfrak{a}_n^{m - p - 1} }}\frac{1}{{d^{m + s - 2} }}\frac{{(m - q)!}}{{1 + \left| \xi  \right|^{1 + \rho } }}\frac{{(q + s)!}}{{(q + 1)\binom{q}{p}}} \\ & \le C_{q - p} c_2 c_3 \frac{1}{{\mathfrak{a}_n^{m - p - 1} }}\frac{1}{{d^{m + s - 2} }}\frac{{(m - q)!}}{{1 + \left| \xi  \right|^{1 + \rho } }}\frac{{(q + s)!}}{{q + 1}}.
\end{align*}
Hence, by using the definition of $\mathfrak{a}_n$ along with inequality \eqref{ineq6}, we can estimate the absolute value of the sixth line in the following manner:
\begin{align*}
& C_{m - 1} c_2 c_3 d\frac{1}{{\mathfrak{a}_n^{m - 1} }}\frac{1}{{d^{m + s - 1} }}\frac{1}{{1 + \left| \xi  \right|^{1 + \rho } }}\frac{1}{n}\sum\limits_{k = 0}^{n - 2} {\sum\limits_{q = 1}^m {\frac{{(m - q)!(q + s)!}}{{q + 1}}\sum\limits_{p = 1}^{\min (k,q)} {\binom{k}{p}\mathfrak{a}_n^p } } } 
\\ & \le C_{m - 1} c_2 c_3 d\frac{1}{{\mathfrak{a}_n^{m - 1} }}\frac{1}{{d^{m + s - 1} }}\frac{1}{{1 + \left| \xi  \right|^{1 + \rho } }} \sum\limits_{q = 1}^m {\frac{{(m - q)!(q + s)!}}{{q + 1}}\sum\limits_{p = 1}^{n} {\binom{n}{p}\mathfrak{a}_n^p } }  
\\ & \le C_{m - 1} 2c_2 c_3 dn\mathfrak{a}_n \frac{1}{{\mathfrak{a}_n^{m - 1} }}\frac{1}{{d^{m + s - 1} }}\frac{1}{{1 + \left| \xi  \right|^{1 + \rho } }}\sum\limits_{q = 1}^m {\frac{{(m - q)!(q + s)!}}{{q + 1}}} \\ & \le C_{m - 1} \frac{6c_2 c_3 dn\mathfrak{a}_n}{m+1} \frac{1}{{\mathfrak{a}_n^{m - 1} }}\frac{1}{{d^{m + s - 1} }}\frac{(m+s)!}{{1 + \left| \xi  \right|^{1 + \rho } }}.
\end{align*}
Finally, following a similar approach as in the previous case, we obtain
\begin{align*}
& \left| {\frac{{\d^{s - 1} }}{{\d\xi ^{s - 1} }}\left( {\B_{p - r} \psi _{k,m - q + 1} (\xi )\frac{{\d^r \mathsf{A}_{j,q - p} (\xi )}}{{\d\xi ^r }}} \right)} \right| \\ & \le \sum\limits_{h = 0}^{s - 1} {\binom{s-q}{h}\left| {\frac{{\d^h }}{{\d\xi ^h }}\left( {\B_{p - r} \psi _{k,m - q + 1} (\xi )} \right)} \right|\left| {\frac{{\d^{r + s - h - 1} \mathsf{A}_{j,q - p} (\xi )}}{{\d\xi ^{r + s - h - 1} }}} \right|} 
\\ & \le C_{q-p} c_2 c_3 \frac{1}{{\mathfrak{a}_n^{m - p - 1} }}\frac{1}{{d^{m + s - 2} }}\frac{{(m - q)!}}{{1 + \left| \xi  \right|^{1 + \rho } }}\sum\limits_{h = 0}^{s - 1} {\binom{s-q}{h}(p - r + h)!(q - p + r + s - h - 1)!} 
\\ & = C_{q-p} c_2 c_3 \frac{1}{{\mathfrak{a}_n^{m - p - 1} }}\frac{1}{{d^{m + s - 2} }}\frac{{(m - q)!}}{{1 + \left| \xi  \right|^{1 + \rho } }}\frac{{(q + s)!}}{{(q + 1)\binom{q}{p-r}}} \\ & \le C_{q-p} c_2 c_3 \frac{1}{{\mathfrak{a}_n^{m - p - 1} }}\frac{1}{{d^{m + s - 2} }}\frac{{(m - q)!}}{{1 + \left| \xi  \right|^{1 + \rho } }}\frac{{(q + s)!}}{{q + 1}} .
\end{align*}
Thus, by applying the definition of $\mathfrak{a}_n$ along with inequality \eqref{ineq6}, we can estimate the absolute value of the seventh line as follows:
\begin{align*}
& C_{m - 1} c_2 c_3 d\frac{1}{{\mathfrak{a}_n^{m - 1} }}\frac{1}{{d^{m + s - 1} }}\frac{1}{{1 + \left| \xi  \right|^{1 + \rho } }}\frac{1}{n}\sum\limits_{k = 0}^{n - 2} {\sum\limits_{q = 2}^m {\frac{{(m - q)!(q + s)!}}{{q + 1}}\sum\limits_{p = 2}^{\min (k,q)} {\binom{k}{p}\mathfrak{a}_n^p \sum\limits_{r = 1}^{p - 1} {\binom{p}{r}} } } } 
\\ & \le C_{m - 1} c_2 c_3 d\frac{1}{{\mathfrak{a}_n^{m - 1} }}\frac{1}{{d^{m + s - 1} }}\frac{1}{{1 + \left| \xi  \right|^{1 + \rho } }}\frac{1}{n}\sum\limits_{k = 0}^{n - 2} {\sum\limits_{q = 2}^m {\frac{{(m - q)!(q + s)!}}{{q + 1}}\sum\limits_{p = 2}^{\min (k,q)} {\binom{k}{p}\mathfrak{a}_n^p 2^p } } }
\\ &  \le C_{m - 1} c_2 c_3 d2^n \frac{1}{{\mathfrak{a}_n^{m - 1} }}\frac{1}{{d^{m + s - 1} }}\frac{1}{{1 + \left| \xi  \right|^{1 + \rho } }}\sum\limits_{q = 1}^m \frac{{(m - q)!(q + s)!}}{{q + 1}}\sum\limits_{p = 2}^n {\binom{n}{p}\mathfrak{a}_n^p }  
\\ & = C_{m - 1} c_2 c_3 d2^n n\mathfrak{a}_n \frac{1}{{\mathfrak{a}_n^{m - 1} }}\frac{1}{{d^{m + s - 1} }}\frac{1}{{1 + \left| \xi  \right|^{1 + \rho } }} \sum\limits_{q = 1}^m \frac{{(m - q)!(q + s)!}}{{q + 1}}
\\ & \le C_{m - 1} \frac{{3c_2 c_3 d2^n n\mathfrak{a}_n }}{{m + 1}}\frac{1}{{\mathfrak{a}_n^{m - 1} }}\frac{1}{{d^{m + s - 1} }}\frac{{(m + s)!}}{{1 + \left| \xi  \right|^{1 + \rho } }}.
\end{align*}
By combining all of these estimates and employing the definition of $C_m$, we conclude the induction
step for the higher derivatives of $\mathsf{A}_{j,m} (\xi )$:
\begin{align*}
& \left|\frac{{\d^s\mathsf{A}_{j,m} (\xi )}}{{\d\xi^s }}\right| \\ & \le C_{m - 1} \left( {1 + \frac{{c_3 d(2^n  + 1) + 2c_2 d + 3(2 + c_3 (2^n  + 2))c_2 dn\mathfrak{a}_n }}{{m + 1}}} \right)\frac{1}{{\mathfrak{a}_n^{m - 1} }}\frac{1}{{d^{m + s - 1} }}\frac{{(m + s)!}}{{1 + \left| \xi  \right|^{1 + \rho } }} \\ & \le C_m \frac{1}{{\mathfrak{a}_n^{m - 1} }}\frac{1}{{d^{m + s - 1} }}\frac{{(m + s)!}}{{1 + \left| \xi  \right|^{1 + \rho } }}.
\end{align*}
This completes the proof of inequalities \eqref{eq75} and \eqref{eq76}.

To finish the proof of Proposition \ref{prop1}, we note that
\begin{align*}
C_m  = C_1 \prod\limits_{r = 3}^{m + 1} {\left( {1 + \frac{{c_4 }}{r}} \right)} & \le C_1 \exp\bigg( c_4 \sum\limits_{r = 3}^{m + 1} \frac{1}{r}\bigg) \\ & \le C_1 \exp (c_4 \log (m + 1)) = C_1 (m + 1)^{c_4 }  \le (m + 1)^L 
\end{align*}
with a suitable positive integer $L$ independent of $u$, $\xi$, $j$, $m$ and $s$. When $m=1$, the empty product and sum are interpreted as $1$ and $0$, respectively. Then
\begin{align*}
& (m + 1)!(m + 1)^L  \le (m + 1)!(m + 2) \cdots (m + L + 1) = (m + L + 1)!,
\\ & (m + s)!(m + 1)^L  \le (m + s)!(m + 1 + s) \cdots (m + L + s) = (m + L + s)!
\end{align*}
for any positive integers $m$ and $s$.
\end{proof}

\section{Gevrey-type estimates for the remainder terms}\label{gevreyremainder}

In this section, we construct the functions $\eta_j(u,\xi)$ specified in Theorem \ref{thm1} in the form of truncated asymptotic expansions, each accompanied by its respective remainder term, possessing a Gevrey-type bound. The precise statement is provided below.

\begin{proposition}\label{prop2}
Let $0<d<d'$. If Condition \ref{cond} holds then the differential equation \eqref{eq3} admits unique solutions $W_j(u,\xi)$, $0\le j\le n-1$, of the form \eqref{eq17} with the following properties. The functions $\eta_j (u,\xi )$ are analytic in $\left\{ u : \Re(u) > \sigma' \right\}  \times  \Gamma_j (d)$, meeting the limit conditions \eqref{eq19}, and for any positive integer $N$, they can be represented in the form
\begin{equation}\label{eq30}
\eta_j (u,\xi )  = \sum\limits_{m = 1}^{N - 1} \frac{\mathsf{A}_{j,m} (\xi )}{u^m}  + R_{j,N} (u,\xi ).
\end{equation}
The remainder terms $R_{j,N} (u,\xi)$ possess the property that, for each $\sigma > \sigma'$, there exists a constant $c_5$, independent of $u$, $\xi$, $j$ and $N$, such that
\begin{equation}\label{eq29}
\left|R_{j,N} (u,\xi )\right| \le c_5\frac{1}{\mathfrak{a}_n^N}\frac{(N+n+L)!}{d^N }\frac{1}{\left| u \right|^N }\sum\limits_{\ell\neq j} \frac{1}{(\max (1,\Re (\xi \e^{ - \im\theta _{j,\ell} } )))^\rho  } ,
\end{equation}
under the conditions $\Re(u)\ge \sigma$ and $\xi \in \Gamma_j(d)$. Here, $L$ is the constant from Proposition \ref{prop1}.
\end{proposition}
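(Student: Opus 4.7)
The plan is to derive the linear ODE satisfied by $\eta_j$, factor its leading (in $u$) part into $n$ first-order operators, and thereby recast the problem as a Volterra-type integral equation amenable to the Gevrey bounds of Proposition \ref{prop1}. Substituting the ansatz (\ref{eq17}) into (\ref{eq3}) and using Leibniz together with the identity $\e^{-\alpha_j u\xi-X_j(\xi)}\partial_\xi^s\e^{\alpha_j u\xi+X_j(\xi)}=\sum_{p=0}^s\binom{s}{p}(\alpha_j u)^{s-p}\B_p$, where $\alpha_j=\e^{2\pi\im j/n}$, yields, after cancellation of the $u^n$ terms (via the characteristic equation) and the $u^{n-1}$ terms (via the defining equation (\ref{eq18}) of $X_j$), an inhomogeneous linear ODE of the form $\mathcal{L}_j[\eta_j]=\mathcal{N}_j[\eta_j]+\mathcal{S}_j$. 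The leading operator is
\[
\mathcal{L}_j=\frac{\d}{\d\xi}\prod_{\ell\ne j}\Big(\frac{\d}{\d\xi}-(\alpha_\ell-\alpha_j)u\Big),
\]
$\mathcal{N}_j$ collects all remaining contributions in $\eta_j$ (each carrying at least one extra inverse power of $u$, or a factor $\psi_{k,m}$ with $m\ge 1$), and $\mathcal{S}_j$ is the source produced by applying the conjugated operator to $1$.

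Each first-order factor of $\mathcal{L}_j$ admits a bounded right inverse realised by integration from $\xi$ along $\mathscr{P}_{j,\ell}(\xi)$: the definition $\theta_{j,\ell}=-\arg(\alpha_\ell-\alpha_j)$ ensures $\Re((\alpha_\ell-\alpha_j)u\e^{\im\theta_{j,\ell}})=|\alpha_\ell-\alpha_j|\Re(u)>0$, so the kernel $\exp((\alpha_\ell-\alpha_j)u(\xi-t))$ decays along $\mathscr{P}_{j,\ell}(\xi)$. The remaining factor $\d/\d\xi$ is inverted along $\mathscr{P}_j(\xi)$, whose direction $\theta_j$ lies inside the common sector of convergence guaranteed by condition (ii) in the definition of $\Gamma_j(d)$. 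Composing these $n$ inversions produces a bounded right inverse $\mathcal{L}_j^{-1}$, and the limit conditions (\ref{eq19}) single out precisely this inverse.

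I then split $\eta_j=S_{j,N-1}+R_{j,N}$ with $S_{j,N-1}=\sum_{m=1}^{N-1}\mathsf{A}_{j,m}(\xi)/u^m$. By construction of the recurrence (\ref{eq21}), the contributions of orders $u^{-1},\ldots,u^{-(N-1)}$ in $\mathcal{L}_j[S_{j,N-1}]-\mathcal{N}_j[S_{j,N-1}]-\mathcal{S}_j$ cancel identically, leaving a residual $h_{j,N}$ consisting of those terms in the expansion of (\ref{eq21}) whose indices force an overall factor of $u^{-N}$. The problem reduces to $R_{j,N}=\mathcal{L}_j^{-1}\mathcal{N}_j[R_{j,N}]+\mathcal{L}_j^{-1}[h_{j,N}]$. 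I would solve this by Banach's fixed-point theorem on a weighted sup-norm space
\[
\|g\|_N=\sup\,|g(u,\xi)|\,|u|^N\bigg/\sum_{\ell\ne j}\frac{1}{(\max(1,\Re(\xi\e^{-\im\theta_{j,\ell}})))^\rho},
\]
with analogous controls on the first $n-1$ derivatives in $\xi$ (for compatibility with (\ref{eq19})). Each application of $\mathcal{L}_j^{-1}\mathcal{N}_j$ gains a factor of $|u|^{-1}$, so for sufficiently large $\sigma$ it is a contraction, producing a unique $R_{j,N}$ whose norm is bounded by that of $\mathcal{L}_j^{-1}[h_{j,N}]$. Different $R_{j,N}$ are compatible with each other and define a single $\eta_j$ on $\{\Re(u)>\sigma'\}\times\Gamma_j(d)$; uniqueness then follows because any other solution would differ by an $\mathcal{L}_j$-null mode $\exp((\alpha_\ell-\alpha_j)u\xi+\cdots)$, incompatible with (\ref{eq19}).

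The main obstacle is the quantitative estimate of $\mathcal{L}_j^{-1}[h_{j,N}]$ leading to (\ref{eq29}): one must apply Lemmas \ref{lemma1}--\ref{lemma5} and Proposition \ref{prop1} to the explicit expression of $h_{j,N}$ in essentially the same combinatorial manner as the induction proof of Proposition \ref{prop1}, invoking Lemma \ref{lemma6} to sum the arising Vandermonde-type series and the defining property of $\mathfrak{a}_n$ to absorb factors $\binom{n}{p}\mathfrak{a}_n^p$. The factorial $(N+n+L)!$ rather than $(N+L)!$ reflects the $n-1$ additional integrations in $\mathcal{L}_j^{-1}$ (one for each $\ell\ne j$), and the sum $\sum_{\ell\ne j}(\max(1,\Re(\xi\e^{-\im\theta_{j,\ell}})))^{-\rho}$ arises naturally from applying Lemma \ref{lemma5} to each such integration.
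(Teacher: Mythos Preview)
Your outline is structurally parallel to the paper's argument: both derive an ODE for the error, convert it to an integral equation by inverting the leading constant-coefficient part along the rays $\mathscr{P}_{j,\ell}(\xi)$, and iterate. The paper, however, works with the unfactored error $\varepsilon_{j,N}(u,\xi)=\exp(\alpha_j u\xi+X_j(\xi))R_{j,N}(u,\xi)$, builds the Green's kernel by variation of parameters (computing the Wronskian of the $n$ exponentials $\exp(\alpha_\ell u\xi)$ and splitting the $\ell=j$ contribution among the $n-1$ rays $\mathscr{P}_{j,\ell}$) rather than composing $n$ first-order inverses, and---crucially---solves the resulting equation by \emph{successive approximation}, not Banach contraction.

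The genuine gap in your proposal is the assertion ``each application of $\mathcal{L}_j^{-1}\mathcal{N}_j$ gains a factor of $|u|^{-1}$, so for sufficiently large $\sigma$ it is a contraction.'' After conjugation by $\e^{X_j}$, the perturbation $\mathcal{N}_j$ contains the term $-nX_j'(\xi)\,\partial_\xi^{\,n-1}$ (the $O(u^0)$ correction to the $\partial_\xi^{\,n-1}$ coefficient). To close your norm you must control $\partial_\xi^{\,n-1}$ of the output, and the identity $\partial_\xi(\partial_\xi-\lambda u)^{-1}=I+\lambda u(\partial_\xi-\lambda u)^{-1}$ applied $n-1$ times shows that $\partial_\xi^{\,n-1}\mathcal{L}_j^{-1}$ is $O(1)$ in $u$, not $O(u^{-(n-1)})$: the $n-1$ factors of $u^{-1}$ supplied by the resolvents are consumed by the $n-1$ derivatives. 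Hence $\mathcal{L}_j^{-1}\mathcal{N}_j$ is merely bounded on a space controlling $\eta_j,\ldots,\partial_\xi^{\,n-1}\eta_j$, and no choice of large $\sigma$ makes it contract. The paper's resolution is to track the \emph{scaled} quantities $u^{-s}\partial_\xi^{\,s}\varepsilon_{j,N}$ and exploit the nested-integral (Volterra) structure of the rays: the $m^{\text{th}}$ Neumann iterate acquires a factor $\tfrac{1}{m!}\big(\int\sum_k|\psi_{k,1}(u,t)|\,|\d t|\big)^m$, so the series $\sum_m h_{m,j,N}$ converges like an exponential, directly for \emph{every} $\Re(u)\ge\sigma>\sigma'$. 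This also removes the need for your separate ``compatibility'' step to descend from large $\sigma$ to the full half-plane.
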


To construct the solutions $W_j(u,\xi)$, we shall employ a technique similar to the one developed by Olver for second-order equations \cite[Ch. 10, \S3]{Olver1997}. We begin by defining, for $m\ge 0$ and $0\le j\le n-1$,
\[
\widehat{\mathsf{A}}_{j,m} (\xi ) = \exp ( X_j (\xi ) )\mathsf{A}_{j,m} (\xi ).
\]
Now, for each $0\le j\le n-1$ and $N\ge 1$, we seek solutions to the differential equation \eqref{eq3} in the form
\begin{equation}\label{eq11}
W_{j,N} (u,\xi ) = \exp \Big( {\e^{2\pi \im j/n} u\xi } \Big)\sum\limits_{m = 0}^{N - 1} \frac{\widehat{\mathsf{A}}_{j,m} (\xi )}{u^m }  + \varepsilon _{j,N} (u,\xi ),
\end{equation}
subject to the conditions $\Re(u)>\sigma'$ and $\xi \in \Gamma_j(d)$. Differentiating \eqref{eq11} $k$ times yields
\begin{gather}\label{eq12}
\begin{split}
\frac{{\d^k }}{{\d\xi ^k }}W_{j,N} (u,\xi ) =\; & u^k \exp \Big( {\e^{2\pi \im j/n} u\xi } \Big)\sum\limits_{m = 0}^{N + k - 1} {\bigg( {\sum\limits_{p = \max (0,m - N + 1)}^{\min (k,m)} {\binom{k}{p}\e^{2\pi \im j(k - p)/n} \frac{{\d^p \widehat{\mathsf{A}}_{j,m - p} (\xi )}}{{\d\xi ^p }}} } \bigg)\frac{1}{{u^m }}}  \\ & + \frac{{\d^k \varepsilon _{j,N} (u,\xi )}}{{\d\xi ^k }}.
\end{split}
\end{gather}
With the notation of Lemma \ref{lemma1}, we write for $0\le k\le n-2$, $0\le m\le N+k-1$:
\begin{equation}\label{eq13}
\psi _k (u,\xi ) = \delta _{0,k}  + \sum\limits_{q = 1}^{N + k - m} {\frac{{\psi _{k,q} (\xi )}}{{u^q }}}  + \frac{{\psi _{k,N + k - m + 1} (u,\xi )}}{{u^{N + k - m + 1} }}.
\end{equation}
Multiplying the $m^{\text{th}}$ term in \eqref{eq12} by \eqref{eq13} yields
\begin{gather}\label{eq14}
\begin{split}
& u^{n - k} \left( {\psi _k (u,\xi ) - \delta _{0,k} } \right)\frac{{\d^k }}{{\d\xi ^k }}W_{j,N} (u,\xi )
\\ & =u^n \exp \Big( {\e^{2\pi \im j/n} u\xi } \Big)\sum\limits_{m = 1}^{N + k} {\bigg( {\sum\limits_{q = 0}^{m - 1} {\psi _{k,m - q} (\xi )\sum\limits_{p = \max (0,q - N + 1)}^{\min (k,q)} {\binom{k}{p}\e^{2\pi \im j(k - p)/n} \frac{{\d^p \widehat{\mathsf{A}}_{j,q - p} (\xi )}}{{\d\xi ^p }}} } } \bigg)\frac{1}{{u^m }}} \\ & \quad +u^n\exp \Big( {\e^{2\pi \im j/n} u\xi } \Big)\frac{1}{{u^{N + k+1} }}\sum\limits_{m = 0}^{N + k - 1} \psi _{k,N + k - m + 1} (u,\xi ) \\ & \quad \times \sum\limits_{p = \max (0,m - N + 1)}^{\min (k,m)} \binom{k}{p}\e^{2\pi \im j(k - p)/n} \frac{{\d^p \widehat{\mathsf{A}}_{j,m - p} (\xi )}}{{\d\xi ^p }} 
\\ & \quad + u^n \left( {\psi _k (u,\xi ) - \delta _{0,k} } \right)\frac{{\d^k \varepsilon _{j,N} (u,\xi )}}{{u^k \d\xi ^k }}.
\end{split}
\end{gather}
If the $\mathsf{A}_{j,m} (\xi)$ satisfy the recurrence relation \eqref{eq21}, then the $\widehat{\mathsf{A}}_{j,m} (\xi)$ satisfy the following equality for all $m\ge 1$:
\begin{equation}\label{ahatrec}
\sum\limits_{p = 1}^{\min(n,m)} \binom{n}{p}\e^{2\pi \im j(n - p)/n} \frac{\d^p \widehat{\mathsf{A}}_{j,m - p} (\xi )}{\d\xi ^p }  =  \sum\limits_{k = 0}^{n - 2} \sum\limits_{q = 0}^{m - 1} \psi _{k,m - q} (\xi )\sum\limits_{p = 0}^{\min(k,q)} \binom{k}{p}\e^{2\pi \im j(k - p)/n} \frac{{\d^p \widehat{\mathsf{A}}_{j,q - p} (\xi )}}{{\d\xi ^p }} .
\end{equation}
By use of \eqref{eq14} and \eqref{ahatrec}, we may verify that if \eqref{eq11} satisfy \eqref{eq3}, then the $\varepsilon _{j,N} (u,\xi )$ satisfy the equation
\begin{gather}\label{epsilonode}
\begin{split}
\frac{{\d^n \varepsilon _{j,N} (u,\xi )}}{{\d\xi ^n }} - u^n \varepsilon _{j,N} (u,\xi ) = 
u^{n - 1} \exp \Big( \e^{2\pi \im j/n} u\xi  \Big)&\frac{1}{{u^N }}(\mathcal{P}_{j,N}(u,\xi)+\mathcal{Q}_{j,N}(u,\xi)+\mathcal{R}_{j,N}(u,\xi)) \\ &+ u^{n - 1} \sum\limits_{k = 0}^{n - 2} \psi _{k,1} (u,\xi )\frac{{\d^k \varepsilon _{j,N} (u,\xi )}}{{u^k \d\xi ^k }} ,
\end{split}
\end{gather}
where
\begin{align*}
& \mathcal{P}_{j,N}(u,\xi) = -\sum\limits_{m = 0}^{n - 2} {\bigg( {\sum\limits_{p = m + 2}^{\min (n,N + m + 1)} {\binom{n}{p}\e^{2\pi \im j(n - p)/n} \frac{{\d^p \widehat{\mathsf{A}}_{j,N + m - p + 1} (\xi )}}{{\d\xi ^p }}} } \bigg)\frac{1}{{u^m }}},
\\ & \mathcal{Q}_{j,N}(u,\xi) = \sum\limits_{k =1}^{n - 2} {\sum\limits_{m = 0}^{k - 1} {\bigg( {\sum\limits_{q = 0}^{N + m} {\psi _{k,N + m - q + 1} (\xi )\sum\limits_{p = \max (0,q - N + 1)}^{\min (k,q)} {\binom{k}{p}\e^{2\pi \im j(k - p)/n} \frac{{\d^p \widehat{\mathsf{A}}_{j,q - p} (\xi )}}{{\d\xi ^p }}} } } \bigg)\frac{1}{{u^m }}} },
\\ & \mathcal{R}_{j,N}(u,\xi) = \sum\limits_{k = 0}^{n - 2} {\frac{1}{{u^k }}\sum\limits_{m = 0}^{N + k - 1} {\psi _{k,N + k - m + 1} (u,\xi )\sum\limits_{p = \max (0,m - N + 1)}^{\min (k,m)} {\binom{k}{p}\e^{2\pi \im j(k - p)/n} \frac{{\d^p \widehat{\mathsf{A}}_{j,m - p} (\xi )}}{{\d\xi ^p }}} } }.
\end{align*}
In arriving at \eqref{epsilonode}, we have made use of the identity $u(\psi _k (u,\xi ) - \delta _{0,k} ) = \psi _{k,1} (u,\xi )$.

We are seeking solutions to the differential equation \eqref{epsilonode} that satisfy limit conditions akin to those in \eqref{eq19}. We shall establish the following lemma.

\begin{lemma}\label{lemma31}
Let $0<d<d'$. With Condition \ref{cond}, equation \eqref{epsilonode} has, for each $0\le j\le n-1$ and $N\ge 1$, a unique solution $\varepsilon _{j,N} (u,\xi )$ which is analytic in $\left\{u: \Re(u) > \sigma'\right\} \times \Gamma_j(d)$ and satisfies
\begin{equation}\label{eq26}
\lim_{t \to  + \infty } \left[ \exp \Big(  - \e^{2\pi \im j/n} u\zeta \Big)\frac{\d^s \varepsilon _{j,N} (u,\zeta )}{u^s \d\zeta ^s }\right]_{\zeta  = \xi  + t\e^{\im\theta _j } }  = 0\qquad (0 \le s \le n - 2),
\end{equation}
provided that $\Re(u)\ge \sigma$, where $\sigma$ is an arbitrary fixed real number greater than $\sigma'$, and $\xi \in \Gamma_j(d)$. Furthermore, for each $\sigma>\sigma'$, there exists a constant $c_6$, independent of $u$, $\xi$, $N$ and $s$, such that
\begin{equation}\label{epsbound}
 \left| \frac{\d^s \varepsilon _{j,N} (u,\xi )}{u^s \d\xi ^s } \right| \le \left|\exp \Big( \e^{2\pi \im j/n} u\xi  \Big) \right| c_6 \frac{1}{\mathfrak{a}_n^N }\frac{(N + n + L)!}{d^N}\frac{1}{\left| u \right|^N }\sum\limits_{\ell \ne j} \frac{1}{(\max (1,\Re (\xi \e^{ - \im\theta _{j,\ell} } )))^\rho} \end{equation}
provided $\Re(u)\ge \sigma$ and $\xi \in \Gamma_j(d)$. Here, $L$ is the constant from Proposition \ref{prop1}.
\end{lemma}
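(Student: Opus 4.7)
The strategy is to convert \eqref{epsilonode} into a Volterra integral equation and solve it by successive approximations, adapting Olver's second-order method in \cite[Ch.~10, \S3]{Olver1997} to our $n$-th order setting. I shall first perform the conjugation $\varepsilon_{j,N}(u,\xi) = \exp(\e^{2\pi\im j/n}u\xi)\,h(u,\xi)$. Writing $\omega_\ell := \e^{2\pi\im\ell/n}$, conjugation turns $\d^n/\d\xi^n - u^n$ into $\prod_\ell(\d/\d\xi + (\omega_j - \omega_\ell)u) = (\d/\d\xi)\prod_{\ell\ne j}(\d/\d\xi + \mu_\ell)$, where $\mu_\ell := (\omega_j - \omega_\ell)u$. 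The key geometric input is that by \eqref{theta2} and property (ii) of $\Gamma_j(d)$, one has $\Re(\mu_\ell \e^{\im\theta_j}) < 0$ for every $\ell\ne j$ whenever $\Re(u) > 0$, so each of the first-order Green's operators
\[
(\d/\d\xi + \mu_\ell)^{-1}f(\xi) := -\int_{\mathscr{P}_j(\xi)} \e^{\mu_\ell(t-\xi)} f(t)\,\d t, \qquad (\d/\d\xi)^{-1}f(\xi) := -\int_{\mathscr{P}_j(\xi)} f(t)\,\d t,
\]
is well-defined for sufficiently decaying $f$, and the composition $\mathcal{G}_j := (\d/\d\xi)^{-1}\prod_{\ell\ne j}(\d/\d\xi + \mu_\ell)^{-1}$ inverts the full factorised operator while automatically producing a solution whose first $n-2$ derivatives vanish along $\mathscr{P}_j(\xi)$ at infinity. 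Via partial fractions the kernel of $\mathcal{G}_j$ is an $O(u^{-(n-1)})$ combination of the $\e^{\mu_\ell(t-\xi)}$.

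\textbf{Integral equation.} Applying $\mathcal{G}_j$ to the conjugated version of \eqref{epsilonode} yields a Volterra equation $h = H^{(0)}_{j,N} + \mathcal{T}_j h$, where $H^{(0)}_{j,N}$ arises from $\mathcal{G}_j$ applied to the explicit inhomogeneous term $u^{n-1}(\mathcal{P}_{j,N}+\mathcal{Q}_{j,N}+\mathcal{R}_{j,N})/u^N$, and $\mathcal{T}_j$ is the linear integral operator arising from the $u^{n-1}\sum_k \psi_{k,1}/u^k\cdot \d^k/\d\xi^k$ term in \eqref{epsilonode}. I plan to solve this by the Picard iteration $h^{(\nu+1)} = H^{(0)}_{j,N} + \mathcal{T}_j h^{(\nu)}$ in a weighted $L^\infty$ space controlling $h$ and its first $n-2$ derivatives simultaneously, with weight designed to reproduce the right-hand side of \eqref{epsbound}. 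To bound $H^{(0)}_{j,N}$, I combine: Proposition \ref{prop1} for $\mathsf{A}_{j,m}$ and its derivatives (with $|\e^{X_j(\xi)}|$ uniformly bounded on $\Gamma_j(d)$ by Lemmas \ref{lemma2} and \ref{lemma5} applied to \eqref{eq18}); Lemma \ref{lemma1} for $\psi_{k,m}(\xi)$ and $\psi_{k,N+k-m+1}(u,\xi)$; the factorial estimates of Lemma \ref{lemma7} to consolidate the multiple sums into the single factor $(N+n+L)!$; and Lemma \ref{lemma5} to convert pointwise $(1+|t|^{1+\rho})^{-1}$ decay along the integration rays into the tail $\sum_{\ell\ne j}(\max(1,\Re(\xi\e^{-\im\theta_{j,\ell}})))^{-\rho}$ appearing in \eqref{epsbound}, exploiting the freedom granted by Condition \ref{cond}(i) and property (ii) of $\Gamma_j(d)$ to deform the integration contour from $\mathscr{P}_j(\xi)$ towards each $\mathscr{P}_{j,\ell}(\xi)$ without leaving $\Gamma_j(d)$.

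\textbf{Contraction, uniqueness and obstacle.} For $\mathcal{T}_j$ the decisive observation is a cancellation of powers of $u$: the kernel of $\mathcal{G}_j$ carries a prefactor $u^{-(n-1)}$, which combined with the explicit $u^{n-1}$ and the additional $u^{-k}$ inside each $\psi_{k,1}/u^k\cdot\d^k/\d\xi^k$ term leaves an overall $u^{-1-k}$ decay; together with the pointwise bound $|\psi_{k,1}(u,\xi)|\le c_1(1+|\xi|^{1+\rho})^{-1}$ from Lemma \ref{lemma1} and the integrability along $\mathscr{P}_j(\xi)$ furnished by Lemma \ref{lemma5}, this will make the operator norm of $\mathcal{T}_j$ strictly less than $1$ uniformly for $\Re(u)\ge \sigma > \sigma'$. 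The fixed point then provides $h_{j,N}$, whence $\varepsilon_{j,N} = \e^{\omega_j u \xi}h_{j,N}$; the bound \eqref{epsbound} follows from geometric summation of the iterates, and analyticity on $\{\Re(u)>\sigma'\}\times\Gamma_j(d)$ follows from the uniform-on-compacta convergence of the iterates. Uniqueness is immediate from \eqref{eq26}: any homogeneous solution of \eqref{epsilonode} is a linear combination of $\e^{\omega_\ell u \xi}$, and the limit condition forces every coefficient to vanish. The principal obstacle lies in the simultaneous control of $\varepsilon_{j,N}$ together with its first $n-2$ derivatives, since the right-hand side of \eqref{epsilonode} explicitly involves $\d^{n-2}\varepsilon_{j,N}/\d\xi^{n-2}$; the Picard iteration must be carried out in a norm strong enough to include these derivatives, and the $u$-bookkeeping sketched above, together with the careful combinatorial summation paralleling the proof of Proposition \ref{prop1}, is essential to ensure contraction in this stronger norm while still producing the prefactor $(N+n+L)!/\mathfrak{a}_n^N$ in \eqref{epsbound}.
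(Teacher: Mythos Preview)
Your overall architecture (conjugate, invert via a Green's kernel, Picard-iterate) matches the paper's, but the geometric claim on which your construction rests is false for $n\ge 3$. You assert that $\Re(\mu_\ell\e^{\im\theta_j})<0$ for every $\ell\ne j$ whenever $\Re(u)>0$, so that each first-order Green's operator along the single ray $\mathscr{P}_j(\xi)$ converges. But $(\omega_j-\omega_\ell)\e^{\im\theta_j}=|\omega_\ell-\omega_j|\,\e^{\im(\pi+\theta_j-\theta_{j,\ell})}$ with $|\theta_j-\theta_{j,\ell}|\le\frac{n-2}{2n}\pi$ by \eqref{theta1}--\eqref{theta2}; hence $\arg(\mu_\ell\e^{\im\theta_j})=\arg u+\pi+(\theta_j-\theta_{j,\ell})$, and once $|\arg u|$ approaches $\pi/2$ with $\theta_j-\theta_{j,\ell}$ of the same sign, this leaves $(\pi/2,3\pi/2)$ and the exponential $\e^{\mu_\ell(t-\xi)}$ \emph{grows} along $\mathscr{P}_j(\xi)$. (For $n=2$ the interval collapses and your claim is fine, which is why the second-order literature can use a single ray.) The paper's remedy, which you do not implement, is to integrate the $\ell$-th partial-fraction term along its own path $\mathscr{P}_{j,\ell}(\xi)$: along that ray $(\omega_\ell-\omega_j)\e^{\im\theta_{j,\ell}}$ is real and positive by the definition of $\theta_{j,\ell}$, so $\Re\big((\omega_\ell-\omega_j)u(\xi-t)\big)\le 0$ for \emph{all} $\Re(u)\ge 0$. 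The contour deformation you mention cannot rescue your definition, since the integral you wish to deform already diverges at one endpoint; the different paths have to be built into the kernel from the start.

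Two secondary issues. First, your assertion that $\|\mathcal{T}_j\|<1$ uniformly for $\Re(u)\ge\sigma$ is unjustified: after the cancellation $u^{-(n-1)}\cdot u^{n-1}\cdot u^{-k}=u^{-k}$ (your ``$u^{-1-k}$'' over-counts by one), the $k=0$ term carries no residual negative power of $u$, and $\psi_{0,1}(u,\xi)$ is merely bounded. The paper does not need a contraction; it uses the Volterra mechanism whereby the $m$-th iterate acquires a factor $1/m!$ from the nested integrals of $\sum_k|\psi_{k,1}(u,t)|$, so the Neumann series converges irrespective of the single-step norm. Second, your uniqueness argument is incorrect: the difference $\upsilon$ of two solutions of \eqref{epsilonode} satisfies $\frac{\d^n\upsilon}{\d\xi^n}-u^n\upsilon=u^{n-1}\sum_k\psi_{k,1}(u,\xi)\,u^{-k}\frac{\d^k\upsilon}{\d\xi^k}$, not the constant-coefficient equation, so it is \emph{not} a linear combination of the $\e^{\omega_\ell u\xi}$. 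The paper instead recasts this homogeneous problem as an integral equation (first for real $u$, then extends by analytic continuation) and iterates to zero via the same $1/m!$ mechanism.
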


By choosing the $\varepsilon _{j,N} (u,\xi )$ in \eqref{eq11} to be those given by Lemma \ref{lemma31}, we will demonstrate that the $W_{j,N}(u,\xi)$ are solutions of the differential equation \eqref{eq3} and that they are, in fact, independent of $N$. In order to prove Lemma \ref{lemma31}, we need to establish three additional lemmas.

\begin{lemma}\label{lemma32}
Let $0 < d < d'$. If Condition \ref{cond} is satisfied, then the following inequalities hold for each $0 \leq j \leq n-1$ and any $\xi \in \Gamma_j(d)$:
\[ 
\left|\widehat{\mathsf{A}}_{j,0} (\xi)\right| \leq c_7 \frac{1}{{\mathfrak{a}_n^{- 1}}} (L + 1)!,\quad \left| {\widehat{\mathsf{A}}_{j,m} (\xi )} \right| \le c_7\frac{1}{{\mathfrak{a}_n^{m - 1} }}\frac{1}{d^m}\frac{(m + L + 1)!}{(\max (1,\Re (\xi \e^{ - \im \theta _j } )))^{\rho }},
\]
for any positive integer $m$, and
\[
\left|\frac{{\d^s \widehat{\mathsf{A}}_{j,m} (\xi)}}{{\d\xi^s}}\right| \leq c_7 \frac{1}{{\mathfrak{a}_n^{m - 1}}} \frac{1}{{d^{m + s - 1}}} \frac{{(m + L + s + 1)!}}{{1 + \left|\xi\right|^{1 + \rho}}},
\]
for any non-negative integer $m$ and positive integer $s$ with $s \leq n$. Here, $L$ is the constant from Proposition \ref{prop1}, and $c_7$ is a suitable constant that is independent of $u$, $\xi$, $j$, $m$ and $s$.
\end{lemma}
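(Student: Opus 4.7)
The plan is to exploit the explicit factorisation $\widehat{\mathsf{A}}_{j,m}(\xi) = \exp(X_j(\xi))\mathsf{A}_{j,m}(\xi)$, combining the estimates of Proposition \ref{prop1} with uniform control on $\exp(X_j(\xi))$ and its derivatives. First I would bound $|\exp(X_j(\xi))|$ by a constant independent of $\xi \in \Gamma_j(d)$. From the definition \eqref{eq18}, the $p=0$ case of Lemma \ref{lemma2}, and Lemma \ref{lemma5}, we obtain
\[
|X_j(\xi)| \le \frac{1}{n}\sum_{k=0}^{n-2}\int_{\mathscr{P}_j(\xi)}|\psi_{k,1}(t)|\,|\d t| \le c_2 \frac{n-1}{n}\cdot\frac{2(1+\rho)}{\rho}
\]
uniformly for $\xi\in\Gamma_j(d)$, yielding a constant $c_8$ with $|\exp(X_j(\xi))|\le c_8$. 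The claimed bound on $|\widehat{\mathsf{A}}_{j,0}(\xi)|$ is then immediate since $\mathsf{A}_{j,0}=1$, and for $m\ge 1$ the bound on $|\widehat{\mathsf{A}}_{j,m}(\xi)|$ follows by multiplying this uniform constant into the estimate supplied by Proposition \ref{prop1}.

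For the derivative bounds (with $s\ge 1$), I would apply Leibniz's rule together with the defining identity $\d^r\exp(X_j(\xi))/\d\xi^r = \exp(X_j(\xi))\,\B_r(X_j'(\xi),\ldots,X_j^{(r)}(\xi))$ of the complete exponential Bell polynomials. This gives
\[
\frac{\d^s\widehat{\mathsf{A}}_{j,m}(\xi)}{\d\xi^s} = \exp(X_j(\xi))\sum_{r=0}^{s}\binom{s}{r}\B_r\,\frac{\d^{s-r}\mathsf{A}_{j,m}(\xi)}{\d\xi^{s-r}},
\]
with the convention $\B_0=1$. The $r=0$ term is handled directly by $c_8$ and Proposition \ref{prop1}. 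For $1\le r\le s$ the Bell polynomial $\B_r$ is estimated by the $r=0$ specialisation of Lemma \ref{lemma3}, while $\d^{s-r}\mathsf{A}_{j,m}/\d\xi^{s-r}$ is estimated by the derivative bound of Proposition \ref{prop1} when $s-r\ge 1$ and by the value bound when $s-r=0$ (which only occurs in the $r=s$ term with $m\ge 1$). In the value-bound case the factor $(\max(1,\Re(\xi\e^{-\im\theta_j})))^{-\rho}$ is harmlessly majorised by $1$, and wherever two factors of $(1+|\xi|^{1+\rho})^{-1}$ arise together, one may be discarded.

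The main obstacle is the combinatorial aggregation of these term-wise estimates into the uniform factorial $(m+L+s+1)!/d^{m+s-1}$. Applying inequality \eqref{ineq1} (with direct inspection for the boundary case $r=1$) yields $(r-1)!(m+L+s-r)!\le(m+L+s-1)!$ in the derivative-bound regime, while for the $r=s$ value-bound term inequality \eqref{ineq2} gives $(s-1)!(m+L+1)!\le(m+L+s)!$. Summing the binomial coefficients via $\sum_{r=0}^{s}\binom{s}{r}\le 2^s\le 2^n$ then collapses every internal sum to a numerical constant. The remaining mismatch of a single factor of $d$ (since interior terms carry $d^{-(m+s-2)}$ rather than $d^{-(m+s-1)}$) is harmless because $d<d'$. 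The $m=0$ case degenerates: $\mathsf{A}_{j,0}=1$ annihilates every term except $r=s$, leaving $|\d^s\widehat{\mathsf{A}}_{j,0}(\xi)/\d\xi^s|\le c_8 c_3(s-1)!/(d^{s-1}(1+|\xi|^{1+\rho}))$, which fits the claim once $c_7$ is chosen large enough to absorb the trivial comparison $(s-1)!\le(L+s+1)!$ and all the accumulated constants.
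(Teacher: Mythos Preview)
Your proposal is correct and follows essentially the same approach as the paper: bound $|X_j(\xi)|$ uniformly via \eqref{eq18} and Lemma~\ref{lemma5}, deduce the value bounds from Proposition~\ref{prop1}, and for the derivatives apply Leibniz together with \eqref{Bprop}, Lemma~\ref{lemma3}, and Proposition~\ref{prop1}. The only minor difference is in the combinatorial bookkeeping---the paper collapses the Leibniz sum via the exact identity of Lemma~\ref{lemma6} rather than your cruder $(r-1)!(m+L+s-r)!\le(m+L+s-1)!$ plus $\sum_r\binom{s}{r}\le 2^n$---but both routes arrive at the same factorial shape with an absorbable constant.
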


\begin{proof} We will show that Lemma \ref{lemma32} holds with
\[
c_7  = \left( {\max (1,c_3 d) + c_3  + \frac{{c_3  + 1}}{{\mathfrak{a}_n }}} \right)\e^{\frac{{2(1 + \rho )}}{\rho }c_1 } .
\]
From the definition \eqref{eq18} and Lemmas \ref{lemma1} and \ref{lemma5}, we can assert that
\begin{equation}\label{Xbound}
\left| {X_j (\xi )} \right| \le \frac{{2(1 + \rho )}}{\rho }c_1 ,
\end{equation}
for $\xi \in \Gamma_j(d)$. Consequently,
\[
\left| {\widehat{\mathsf{A}}_{j,0} (\xi )} \right| \le \e^{\frac{{2(1 + \rho )}}{\rho }c_1 }  \le \frac{1}{{\mathfrak{a}_n }}\e^{\frac{{2(1 + \rho )}}{\rho }c_1 } \frac{1}{{\mathfrak{a}_n^{ - 1} }}(L + 1)!.
\]
Similarly, by \eqref{Xbound} and Proposition \ref{prop1}, we deduce
\[
\left| {\widehat{\mathsf{A}}_{j,m} (\xi )} \right| \le \e^{\frac{{2(1 + \rho )}}{\rho }c_1 } \frac{1}{{\mathfrak{a}_n^{m - 1} }}\frac{1}{d^m}\frac{(m + L + 1)!}{(\max (1,\Re (\xi \e^{ - \im \theta _j } )))^{\rho }}
\]
for any $m\ge 1$.

For $1 \le s\le n$, employing the identity \eqref{Bprop} along with inequality \eqref{Xbound} and Lemma \ref{lemma3}, we establish
\begin{align*}
\left| {\frac{{\d^s \widehat{\mathsf{A}}_{j,0} (\xi )}}{{\d\xi ^s }}} \right| = \left| {\e^{X_j (\xi )} } \right|\left| {\B_s (X'_j (\xi ),X''_j (\xi ), \ldots ,X_j^{(t)} (\xi ))} \right| & \le c_3 \e^{\frac{{2(1 + \rho )}}{\rho }c_1 } \frac{1}{{d^{s - 1} }}\frac{{(s - 1)!}}{{1 + \left| \xi  \right|^{1 + \rho } }} \\ & \le \frac{{c_3 }}{{\mathfrak{a}_n }}\e^{\frac{{2(1 + \rho )}}{\rho }c_1 } \frac{1}{{\mathfrak{a}_n^{ - 1} }}\frac{1}{{d^{s - 1} }}\frac{{(L + s + 1)!}}{{1 + \left| \xi  \right|^{1 + \rho } }}.
\end{align*}
Lastly, considering $m\ge 1$ and $1 \le s\le n$, again using the identity \eqref{Bprop}, inequality \eqref{Xbound}, Proposition \ref{prop1}, Lemmas \ref{lemma6} and \ref{lemma3}, and inequality \eqref{ineq2}, we deduce
\begin{align*}
& \left| {\frac{{\d^s \widehat{\mathsf{A}}_{j,m} (\xi )}}{{\d\xi ^s }}} \right| \le \left| {\e^{X_j (\xi )} } \right| \sum\limits_{h = 0}^s {\binom{s}{h}\left| {\B_h (X'_j (\xi ),X''_j (\xi ), \ldots ,X_j^{(h)} (\xi ))} \right|\left| {\frac{{\d^{s - h} \mathsf{A}_{j,m} (\xi )}}{{\d\xi ^{s - h} }}} \right|}
\\ & \le \e^{\frac{{2(1 + \rho )}}{\rho }c_1 } \frac{1}{{\mathfrak{a}_n^{m - 1} }}\frac{1}{{d^{m + s - 1} }}\frac{1}{{1 + \left| \xi  \right|^{1 + \rho } }}\\ & \quad\times  \bigg( {(m + L + s)! + c_3 d\sum\limits_{h = 1}^{s - 1} {\binom{s}{h}(h - 1)!(m + L + s - h)!}  + c_3 (s - 1)!(m + L + 1)!} \bigg)
\\ & \le \e^{\frac{{2(1 + \rho )}}{\rho }c_1 } \frac{1}{{\mathfrak{a}_n^{m - 1} }}\frac{1}{{d^{m + s - 1} }}\frac{1}{{1 + \left| \xi  \right|^{1 + \rho } }}\\ & \quad\times\bigg( {\max (1,c_3 d)\sum\limits_{h = 0}^s {\binom{s}{h}h!(m + L + s - h)!}  + c_3 (s - 1)!(m + L + 1)!} \bigg)
\\ & = \e^{\frac{{2(1 + \rho )}}{\rho }c_1 } \frac{1}{{\mathfrak{a}_n^{m - 1} }}\frac{1}{{d^{m + s - 1} }}\frac{1}{{1 + \left| \xi  \right|^{1 + \rho } }}\bigg( {\max (1,c_3 d)\frac{{(m + L + s + 1)!}}{{m + L + 1}} + c_3 (s - 1)!(m + L + 1)!} \bigg)
\\ & \le (\max (1,c_3 d) + c_3 )\e^{\frac{{2(1 + \rho )}}{\rho }c_1 } \frac{1}{{\mathfrak{a}_n^{m - 1} }}\frac{1}{d^{m + s - 1}}\frac{(m + L + s + 1)!}{1 + \left| \xi  \right|^{1 + \rho } }.
\end{align*}

\end{proof}

\begin{lemma}\label{lemma33}
Let $0<d<d'$ and $\sigma > \sigma'$. If Condition \ref{cond} is satisfied, then the following inequalities hold for each $0\le j\le n-1$, and any positive integer $N$:
\[
\left| \mathcal{P}_{j,N} (u,\xi ) + \mathcal{Q}_{j,N} (u,\xi ) + \mathcal{R}_{j,N} (u,\xi ) \right| \le c_8 \frac{1}{{\mathfrak{a}_n^N }}\frac{1}{{d^N }}\frac{{(N + n + L)!}}{{1 + \left| \xi  \right|^{1 + \rho } }}
\]
provided that $\Re(u)\ge \sigma$ and $\xi \in \Gamma_j(d)$. Here, $L$ is the constant from Proposition \ref{prop1}, and $c_8$ is a suitable constant that is independent of $u$, $\xi$, $j$ and $N$.
\end{lemma}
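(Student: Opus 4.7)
The strategy is to treat the three sums $\mathcal{P}_{j,N}$, $\mathcal{Q}_{j,N}$, $\mathcal{R}_{j,N}$ separately, substitute the Gevrey-type bounds furnished by the previously established lemmas into each factor, and then reduce the resulting double/triple sums to the required form using the factorial identities of Lemmas~\ref{lemma6} and \ref{lemma7} together with the defining identity $\sum_{p=2}^{n}\binom{n}{p}\mathfrak{a}_n^{p} = n\mathfrak{a}_n$ of the sequence $\mathfrak{a}_n$. Since $\Re(u)\ge \sigma > \sigma' > 0$ is fixed, the factors $|u|^{-m}$ appearing in $\mathcal{P}_{j,N}$ and $\mathcal{Q}_{j,N}$ (with $0\le m\le n-2$) and $|u|^{-k}$ in $\mathcal{R}_{j,N}$ (with $0\le k\le n-2$) are uniformly bounded by a constant depending only on $\sigma$ and $n$, so they can be absorbed into $c_8$.

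For $\mathcal{P}_{j,N}$, the only nontrivial factor is $|\partial_\xi^p \widehat{\mathsf{A}}_{j,N+m-p+1}(\xi)|$. Since $2\le p\le n$ and $N+m-p+1\ge 0$, Lemma~\ref{lemma32} gives a bound of the form $c_7\,\mathfrak{a}_n^{-(N+m-p)} d^{-(N+m-1)}(N+m+L+2)!/(1+|\xi|^{1+\rho})$. Multiplying by $\binom{n}{p}$, summing in $p$ and exploiting the identity $\sum_{p=2}^n\binom{n}{p}\mathfrak{a}_n^p = n\mathfrak{a}_n$, and finally using $(N+m+L+2)!\le (N+n+L)!$ for $m\le n-2$, I obtain a bound of the desired shape. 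The same pattern handles $\mathcal{R}_{j,N}$: I bound $|\psi_{k,N+k-m+1}(u,\xi)|$ by Lemma~\ref{lemma1} and $|\partial_\xi^p \widehat{\mathsf{A}}_{j,m-p}(\xi)|$ by Lemma~\ref{lemma32}, producing a product of factorials $(N+k-m+1)!(m-p+L+p+1)!$ (or $(L+1)!$ when $m-p=0$). After multiplying by $\binom{k}{p}$ and summing in $p$ (for fixed $k,m$), inequality \eqref{ineq2} of Lemma~\ref{lemma7} collapses the factorial product into $(N+k+L+2)!$ up to a harmless binomial prefactor; the remaining sum in $m$ then yields the claimed bound once the $\mathfrak{a}_n$-identity is invoked.

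The genuinely intricate case is $\mathcal{Q}_{j,N}$. Here I would split the inner sum over $q$ into the range $0\le q\le N-1$, where $\max(0,q-N+1)=0$, and the range $N\le q\le N+m$, where the lower bound activates. In the first range, Lemma~\ref{lemma2} provides $|\psi_{k,N+m-q+1}(\xi)|\lesssim c_2\,\mathfrak{a}_n^{-(N+m-q)} d^{-(N+m-q)}(N+m-q)!/(1+|\xi|^{1+\rho})$ and Lemma~\ref{lemma32} provides $|\partial_\xi^p\widehat{\mathsf{A}}_{j,q-p}(\xi)|\lesssim c_7 \mathfrak{a}_n^{-(q-p-1)} d^{-(q-1)}(q+L+1)!/(1+|\xi|^{1+\rho})$; multiplying and summing in $p$ via Lemma~\ref{lemma6} (applied with $\alpha=k$, suitable $\beta$, $\gamma$) reduces the $p$-sum to a single factorial of total weight $\approx q+L+k+1$. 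The $q$-sum is then a convolution of factorials of shapes $(N+m-q)!$ and $(q+L+k+1)!$, which by inequality \eqref{ineq2} is dominated by $(N+m+L+k+1)!\le (N+n+L)!$ times a polynomial factor, and the resulting constant is absorbed. The range $N\le q\le N+m$ is smaller (only $m+1\le n-1$ terms) and is handled by direct estimation using the same two lemmas plus the finiteness of the $p$-sum. Combining with the outer $\mathfrak{a}_n$-identity closes the case.

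The main obstacle I anticipate is purely bookkeeping: in each of the three terms several indices interact ($k,m,q,p$ ranging over overlapping, nonstandard sets), the factorials produced by Lemmas~\ref{lemma1}, \ref{lemma2}, \ref{lemma32} carry different offsets ($m-1$, $L+1$, $L+s+1$, etc.), and one must keep track of the powers of $\mathfrak{a}_n$ and $d$ so that after applying the $\mathfrak{a}_n$-identity the total exponents come out to $-N$ exactly. No new analytic idea is needed beyond what appears in the proof of Proposition~\ref{prop1}: this lemma is essentially a companion estimate, and its constant $c_8$ is an explicit polynomial expression in $n,\rho,c_1,c_2,c_7$ and $\sigma^{-1}$, independent of $u$, $\xi$, $j$, $N$.
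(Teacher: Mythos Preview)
Your plan is correct and mirrors the paper's proof: bound each of $\mathcal{P}_{j,N}$, $\mathcal{Q}_{j,N}$, $\mathcal{R}_{j,N}$ separately by inserting the estimates from Lemmas~\ref{lemma1} and~\ref{lemma32}, then collapse the resulting factorial convolutions via the inequalities of Lemma~\ref{lemma7} and control the $p$-sums through the binomial identity for $\mathfrak{a}_n$. Two small simplifications relative to your sketch: the paper does not split the $q$-sum in $\mathcal{Q}_{j,N}$ (both ranges are handled uniformly, using \eqref{ineq2}), and Lemma~\ref{lemma6} is not needed for the $p$-sums here since the factorial $(q+L+1)!$ coming from Lemma~\ref{lemma32} is independent of $p$, so $\sum_p \binom{k}{p}\mathfrak{a}_n^{p}\le (1+\mathfrak{a}_n)^n$ suffices.
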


\begin{proof} We shall prove that Lemma \ref{lemma33} holds with
\[
c_8 = \frac{3c_7 \max (d,c_1 ,c_1 \mathfrak{a}_n d)(1 + \mathfrak{a}_n )^n \max(1,d)}{\min (1,\sigma ^{n - 2} )d}\frac{(n - 1)^2 }{\min (1,(\mathfrak{a}_n d)^{n - 2} )}.
\]
From Lemma \ref{lemma32}, we can readily infer that
\begin{align*}
\left| \mathcal{P}_{j,N} (u,\xi ) \right| & \le \sum\limits_{m = 0}^{n - 2} {\bigg( {\sum\limits_{p = m + 2}^{\min (n,N + m + 1)} {\binom{n}{p}\left| {\frac{{\d^p \widehat{\mathsf{A}}_{j,N + m - p + 1} (\xi )}}{{\d\xi ^p }}} \right|} } \bigg)\frac{1}{{\left| u \right|^m }}} 
\\ & \le c_7 \frac{1}{{\mathfrak{a}_n^N }}\frac{1}{{d^N }}\frac{1}{{1 + \left| \xi  \right|^{1 + \rho } }}\sum\limits_{m = 0}^{n - 2} {\bigg( {\frac{1}{{\mathfrak{a}_n^m }}\frac{1}{{d^m }}(N + m + L + 2)!\sum\limits_{p = m + 2}^{\min (n,N + m + 1)} {\binom{n}{p}\mathfrak{a}_n^p } } \bigg)\frac{1}{{\sigma ^m }}} 
\\ & \le \frac{{c_7 }}{{\min (1,\sigma ^{n - 2} )}}\frac{1}{{\mathfrak{a}_n^N }}\frac{1}{{d^N }}\frac{{(N + n + L)!}}{{1 + \left| \xi  \right|^{1 + \rho } }}\sum\limits_{m = 0}^{n - 2} {\frac{1}{{\mathfrak{a}_n^m }}\frac{1}{{d^m }}\sum\limits_{p = m + 2}^{\min (n,N + m + 1)} {\binom{n}{p}\mathfrak{a}_n^p } } 
\\ & \le \frac{{c_7 (1 + \mathfrak{a}_n )^n }}{{\min (1,\sigma ^{n - 2} )}}\frac{1}{{\mathfrak{a}_n^N }}\frac{1}{{d^N }}\frac{{(N + n + L)!}}{{1 + \left| \xi  \right|^{1 + \rho } }}\sum\limits_{m = 0}^{n - 2} {\frac{1}{{\mathfrak{a}_n^m }}\frac{1}{{d^m }}} 
\\ & \le \frac{{c_7 (1 + \mathfrak{a}_n )^n }}{{\min (1,\sigma ^{n - 2} )}}\frac{n - 1}{\min (1,(\mathfrak{a}_n d)^{n - 2} )}\frac{1}{{\mathfrak{a}_n^N }}\frac{1}{{d^N }}\frac{{(N + n + L)!}}{1 + \left| \xi  \right|^{1 + \rho } }.
\end{align*}
Using Lemmas \ref{lemma1} and \ref{lemma32}, along with inequality \eqref{ineq2}, we can assert that
\begin{align*}
\left| \mathcal{Q}_{j,N} (u,\xi ) \right| & \le \sum\limits_{k = 1}^{n - 2} {\sum\limits_{m = 0}^{k - 1} {\bigg( {\sum\limits_{q = 0}^{N + m} {\left| {\psi _{k,N + m - q + 1} (\xi )} \right|\sum\limits_{p = \max (0,q - N + 1)}^{\min (k,q)} {\binom{k}{p}\left| {\frac{{\d^p \widehat{\mathsf{A}}_{j,q - p} (\xi )}}{{\d\xi ^p }}} \right|} } } \bigg)\frac{1}{{\left| u \right|^m }}} } \\ & \le c_1 c_7 \max(1,d)\frac{1}{{\mathfrak{a}_n^N }}\frac{1}{{d^N }}\frac{1}{{1 + \left| \xi  \right|^{1 + \rho } }}\sum\limits_{k = 1}^{n - 2} \sum\limits_{m = 0}^{k - 1} \frac{1}{{\mathfrak{a}_n^{m - 1} }}\frac{1}{{d^{m} }} \\ & \quad\times \bigg( \sum\limits_{q = 0}^{N + m} {(N + m - q)!\sum\limits_{p = \max (0,q - N + 1)}^{\min (k,q)} {\binom{k}{p}\mathfrak{a}_n^p (q  + L  + 1)!} }  \bigg)\frac{1}{\sigma ^m }  
\\ & \le \frac{{c_1 c_7\max(1,d) }}{{\min (1,\sigma ^{n - 2} )}}\frac{1}{{\mathfrak{a}_n^N }}\frac{1}{{d^N }}\frac{1}{{1 + \left| \xi  \right|^{1 + \rho } }}\sum\limits_{k = 1}^{n - 2} \sum\limits_{m = 0}^{k - 1} \frac{1}{{\mathfrak{a}_n^{m - 1} }}\frac{1}{d^{m}} (N + m + L + 1)!\\ & \quad\times \sum\limits_{q = 0}^{N + m} \sum\limits_{p = \max (0,q - N + 1)}^{\min (k,q)} \binom{k}{p}\mathfrak{a}_n^p  
\\ & \le \frac{{c_1 c_7\max(1,d) }}{{\min (1,\sigma ^{n - 2} )}}\frac{1}{{\mathfrak{a}_n^N }}\frac{1}{{d^N }}\frac{{(N + n + L - 2)!}}{{1 + \left| \xi  \right|^{1 + \rho } }}\sum\limits_{k = 1}^{n - 2} \sum\limits_{m = 0}^{k - 1} \frac{1}{{\mathfrak{a}_n^{m - 1} }}\frac{1}{{d^{m} }} \\ & \quad\times \sum\limits_{q = 0}^{N + m} \sum\limits_{p = \max (0,q - N + 1)}^{\min (k,q)} \binom{k}{p}\mathfrak{a}_n^p 
\\ & \le \frac{{c_1 c_7 (1 + \mathfrak{a}_n )^n \max(1,d)}}{{\min (1,\sigma ^{n - 2} )}}\frac{1}{{\mathfrak{a}_n^N }}\frac{1}{{d^N }}\frac{{(N + n + L - 2)!}}{{1 + \left| \xi  \right|^{1 + \rho } }}\sum\limits_{k = 1}^{n - 2} {\sum\limits_{m = 0}^{k - 1} {\frac{1}{{\mathfrak{a}_n^{m - 1} }}\frac{1}{{d^{m} }}(N + m + 1)} } 
\\ & \le \frac{{c_1 c_7 (1 + \mathfrak{a}_n )^n \mathfrak{a}_n \max(1,d)}}{{\min (1,\sigma ^{n - 2} )}}\frac{1}{{\mathfrak{a}_n^N }}\frac{1}{{d^N }}\frac{{(N + n + L)!}}{{1 + \left| \xi  \right|^{1 + \rho } }}\sum\limits_{k = 1}^{n - 2} {\frac{{k - 1}}{{\min (1,(\mathfrak{a}_n d)^{k - 1} )}}} 
\\ & \le \frac{{c_1 c_7 (1 + \mathfrak{a}_n )^n \mathfrak{a}_n \max(1,d)}}{{\min (1,\sigma ^{n - 2} )}}\frac{{(n - 1)^2 }}{{\min (1,(\mathfrak{a}_n d)^{n - 2} )}}\frac{1}{{\mathfrak{a}_n^N }}\frac{1}{{d^N }}\frac{{(N + n + L)!}}{{1 + \left| \xi  \right|^{1 + \rho } }}.
\end{align*}
Similarly, employing Lemmas \ref{lemma1} and \ref{lemma32}, together with inequality \eqref{ineq1}, we deduce
\begin{align*}
\left| \mathcal{R}_{j,N} (u,\xi ) \right| & \le \sum\limits_{k = 0}^{n - 2} {\frac{1}{{\left| u \right|^k }}\sum\limits_{m = 0}^{N + k - 1} {\left| {\psi _{k,N + k - m + 1} (u,\xi )} \right|\sum\limits_{p = \max (0,m - N + 1)}^{\min (k,m)} {\binom{k}{p}\left| {\frac{{\d^p \widehat{\mathsf{A}}_{j,m - p} (\xi )}}{{\d\xi ^p }}} \right|} } } 
\\ & \le c_1 c_7\max(1,d) \frac{1}{{\mathfrak{a}_n^N }}\frac{1}{{d^{N+1} }}\frac{1}{{1 + \left| \xi  \right|^{1 + \rho } }}\sum\limits_{k = 0}^{n - 2} \frac{1}{{\sigma ^k }}\frac{1}{{\mathfrak{a}_n^k }}\frac{1}{{d^k }}\sum\limits_{m = 0}^{N + k - 1} (N + k - m + 1)!(m + L + 1)! \\ & \quad\times \sum\limits_{p = \max (0,m - N + 1)}^{\min (k,m)} \binom{k}{p}\mathfrak{a}_n^p 
\\ & \le \frac{{c_1 c_7 (1 + \mathfrak{a}_n )^n \max(1,d)}}{{\min (1,\sigma ^{n - 2} )}}\frac{1}{{\mathfrak{a}_n^N }}\frac{1}{{d^{N+1} }}\frac{1}{{1 + \left| \xi  \right|^{1 + \rho } }}\sum\limits_{k = 0}^{n - 2} {\frac{1}{{\mathfrak{a}_n^k }}\frac{1}{{d^k }}(N + k + L + 1)!(N + k)} 
\\ & \le \frac{{c_1 c_7 (1 + \mathfrak{a}_n )^n \max(1,d)}}{{\min (1,\sigma ^{n - 2} )}}\frac{1}{{\mathfrak{a}_n^N }}\frac{1}{{d^{N+1} }}\frac{{(N + n + L)!}}{{1 + \left| \xi  \right|^{1 + \rho } }}\sum\limits_{k = 0}^{n - 2} {\frac{1}{{\mathfrak{a}_n^k }}\frac{1}{{d^k }}} 
\\ & \le \frac{{c_1 c_7 (1 + \mathfrak{a}_n )^n \max(1,d)}}{{\min (1,\sigma ^{n - 2} )d}}\frac{{n - 1}}{{\min (1,(\mathfrak{a}_n d)^{n - 2} )}}\frac{1}{{\mathfrak{a}_n^N }}\frac{1}{{d^{N} }}\frac{{(N + n + L)!}}{{1 + \left| \xi  \right|^{1 + \rho } }}.
\end{align*}
The triangle inequality now leads to the desired result.
\end{proof}

The proof of the following lemma is virtually identical to that of Lemma \ref{lemma5} and is therefore omitted.

\begin{lemma}\label{lemma34} Given that $d > 0$ and $\rho>0$, the following inequalities are valid for each $0\le j\le n-1$, $0\le \ell\le n-1$, where $\ell\ne j$:
\[
\int_{\mathscr{P}_{j,\ell} (\xi )} {\frac{{\left| \d t \right|}}{{1 + \left| t \right|^{1 + \rho } }}}  \le \frac{{2(1 + \rho )}}{\rho }\frac{1}{{(\max (1,\Re (\xi \e^{ - \im \theta _{j,\ell} } )))^\rho  }},
\]
provided that $\xi \in \Gamma_j(d)$.
\end{lemma}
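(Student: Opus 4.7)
The proof will be a verbatim adaptation of the argument used for Lemma \ref{lemma5}, with the angle $\theta_j$ everywhere replaced by $\theta_{j,\ell}$. Fix $0\le j\le n-1$, $\ell\neq j$, and $\xi \in \Gamma_j(d)$. By definition of $\mathscr{P}_{j,\ell}(\xi)$, I parametrise the contour by $t = \xi + s\e^{\im\theta_{j,\ell}}$ with $s \in [0,+\infty)$, so that $|\d t| = \d s$. The basic observation is that $|\xi + s\e^{\im\theta_{j,\ell}}| = |\xi\e^{-\im\theta_{j,\ell}} + s| \ge |\Re(\xi\e^{-\im\theta_{j,\ell}}) + s|$, which gives
\[
\int_{\mathscr{P}_{j,\ell}(\xi)}\frac{|\d t|}{1+|t|^{1+\rho}} \le \int_0^{+\infty}\frac{\d s}{1+|\Re(\xi\e^{-\im\theta_{j,\ell}})+s|^{1+\rho}} = \int_{\Re(\xi\e^{-\im\theta_{j,\ell}})}^{+\infty}\frac{\d s}{1+|s|^{1+\rho}}.
\]

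I then split on the sign (or size) of $\Re(\xi\e^{-\im\theta_{j,\ell}})$. When $\Re(\xi\e^{-\im\theta_{j,\ell}}) > 1$, I drop the additive $1$ in the denominator and compute $\int_{\Re(\xi\e^{-\im\theta_{j,\ell}})}^{+\infty} s^{-1-\rho}\d s = \rho^{-1}\Re(\xi\e^{-\im\theta_{j,\ell}})^{-\rho}$, which is trivially bounded by $\frac{2(1+\rho)}{\rho}\Re(\xi\e^{-\im\theta_{j,\ell}})^{-\rho}$. When $\Re(\xi\e^{-\im\theta_{j,\ell}}) \le 1$, the maximum in the stated bound reduces to $1$, and it suffices to estimate the full integral $\int_{-\infty}^{+\infty}\frac{\d s}{1+|s|^{1+\rho}}$; the substitution $s = (u/(1-u))^{1/(1+\rho)}$ (or direct evaluation via the Beta function) yields $2\Gamma\!\left(\frac{\rho}{1+\rho}\right)\Gamma\!\left(\frac{2+\rho}{1+\rho}\right)$, and using $\Gamma(x+1) = x\Gamma(x)$ together with $\Gamma(\frac{\rho}{1+\rho})\Gamma(\frac{1}{1+\rho}) \le \frac{1+\rho}{\rho}\cdot (1+\rho)$ one obtains the uniform bound $\frac{2(1+\rho)}{\rho}$.

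Combining the two cases into a single expression via the $\max(1,\cdot)$ in the denominator yields exactly the claimed inequality. There is no genuine obstacle: the only data that entered Lemma \ref{lemma5} was that the path is a straight ray from $\xi$ in a fixed direction, and the direction itself played no role beyond providing the rotation used in the inequality $|w|\ge|\Re(w)|$. Since $\mathscr{P}_{j,\ell}(\xi) \subset \Gamma_j(d) \subset \Delta$ by condition (ii) in the definition of $\Gamma_j(d)$, the contour is admissible, and the entire chain of estimates transfers verbatim with $\theta_j$ replaced by $\theta_{j,\ell}$.
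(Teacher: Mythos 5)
Your proof is correct and takes essentially the same approach as the paper, which simply states that the proof is "virtually identical to that of Lemma \ref{lemma5}" and omits it: you reproduce the paper's argument for Lemma \ref{lemma5} verbatim with $\theta_j$ replaced by $\theta_{j,\ell}$, using the same parametrisation, the same bound $|\xi\e^{-\im\theta_{j,\ell}}+s|\ge|\Re(\xi\e^{-\im\theta_{j,\ell}})+s|$, and the same case split on whether $\Re(\xi\e^{-\im\theta_{j,\ell}})$ exceeds $1$. The only small difference is cosmetic — you supply a short justification (via the reflection/recurrence of $\Gamma$) for the numerical bound $2\Gamma(\tfrac{\rho}{1+\rho})\Gamma(\tfrac{2+\rho}{1+\rho})\le\tfrac{2(1+\rho)}{\rho}$ that the paper asserts without comment.
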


\begin{proof}[Proof of Lemma \ref{lemma31}]
We begin by transforming the differential equation \eqref{epsilonode} into an integral equation for $\varepsilon _{j,N} (u,\xi )$, treating \eqref{epsilonode} as an inhomogeneous equation and applying the method of variation of parameters. A set of fundamental solutions to the corresponding homogeneous equation consists of $\exp (u\xi ),\exp ( \e^{2\pi \im/n} u\xi ), \ldots ,\exp (\e^{2\pi \im(n - 1)/n} u\xi)$. The Wronskian $\mathscr{W}$ of these functions is
\begin{align*}
&  \begin{vmatrix}
   {\exp (u\xi )} & {\exp \left( {\e^{2\pi \im /n} u\xi } \right)} &  \cdots  & {\exp \left( {\e^{2\pi \im (n - 1)/n} u\xi } \right)}  \\
   {u\exp (u\xi )} & {\e^{2\pi \im /n} u\exp \left( {\e^{2\pi \im /n} u\xi } \right)} &  \cdots  & {\e^{2\pi \im (n - 1)/n} u\exp \left( {\e^{2\pi \im (n - 1)/n} u\xi } \right)}  \\
    \vdots  &  \vdots  &  \ddots  &  \vdots   \\
   {u^{n - 1} \exp (u\xi )} & {\e^{2\pi \im (n - 1)/n} u^{n - 1} \exp \left( {\e^{2\pi \im /n} u\xi } \right)} &  \cdots  & {\e^{2\pi \im (n - 1)^2 /n} u^{n - 1} \exp \left( {\e^{2\pi \im (n - 1)/n} u\xi } \right)}  \\
\end{vmatrix}
\\ & = u^{n(n - 1)/2}\begin{vmatrix}
   1 & 1 &  \cdots  & 1  \\
   1 & {\e^{2\pi \im /n} } &  \cdots  & {\e^{2\pi \im (n - 1)/n} }  \\
    \vdots  &  \vdots  &  \ddots  &  \vdots   \\
   {1 } & {\e^{2\pi \im (n - 1)/n}  } &  \cdots  & {\e^{2\pi \im (n - 1)^2 /n}  }  \\
\end{vmatrix}  = u^{n(n - 1)/2} \prod\limits_{0 \le k < j \le n - 1} (\e^{2\pi \im j/n}  - \e^{2\pi \im k/n} ) .
\end{align*}
For each $0\le \ell \le n-1$, let $\mathscr{W}_\ell$ denote the Wronskian of the set of functions obtained by deleting $\exp( \e^{2\pi \im \ell/n} u\xi)$ from $\exp (u\xi ),\exp ( \e^{2\pi \im/n} u\xi ), \ldots ,\exp (\e^{2\pi \im(n - 1)/n} u\xi)$ and keeping the remaining functions in the same order. This Wronskian can be expressed as 
\begin{align*}
\mathscr{W}_\ell  & = u^{(n - 1)(n - 2)/2} \exp \Big( - \e^{2\pi \im \ell /n} u\xi  \Big) \prod\limits_{\substack{0 \le k < j \le n - 1\\ k,j \ne \ell}} {(\e^{2\pi \im j/n}  - \e^{2\pi \im k/n} )} 
\\ & = ( - 1)^\ell u^{(n - 1)(n - 2)/2} \exp \Big( - \e^{2\pi \im \ell/n} u\xi \Big) \cfrac{{\displaystyle\prod_{0 \le k < j \le n - 1} {(\e^{2\pi \im j/n}  - \e^{2\pi \im k/n} )} }}{{\displaystyle\prod_{\substack{0 \le j \le n - 1\\ j \ne \ell}} {(\e^{2\pi \im j/n}  - \e^{2\pi \im \ell/n} )} }}
\\ & = \frac{{( - 1)^{n + \ell + 1} }}{n}\e^{2\pi \im \ell/n}u^{(n - 1)(n - 2)/2}  \exp \Big(  - \e^{2\pi \im \ell/n} u\xi  \Big)  \prod\limits_{0 \le k < j \le n - 1} {(\e^{2\pi \im j/n}  - e^{2\pi \im k/n} )} .
\end{align*}
Thus, we find that
\[
( - 1)^{n + \ell + 1}\frac{\mathscr{W}_\ell}{\mathscr{W}} = \frac{1}{n}\frac{1}{{u^{n - 1} }}\e^{2\pi \im \ell/n} \exp\Big( { - \e^{2\pi \im \ell/n} u\xi } \Big)
\]
and employing the method of variation of parameters, a particular solution to the inhomogeneous equation \eqref{epsilonode} is expressed as
\begin{align*}
\varepsilon _{j,N} (u,\xi )   = & \; \frac{1}{n}\frac{1}{{u^N }}\exp \Big( {\e^{2\pi \im j/n} u\xi } \Big)\sum_{\ell =0}^{n-1}  \int^\xi  \e^{2\pi \im \ell/n} \exp \Big( { (\e^{2\pi \im \ell/n}  - \e^{2\pi \im j/n} )u(\xi  - t)} \Big) \\
& \;\quad\qquad\qquad\qquad\qquad\qquad\times (\mathcal{P}_{j,N} (u,t) + \mathcal{Q}_{j,N} (u,t) + \mathcal{R}_{j,N} (u,t))\d t
\\ &  + \frac{1}{n}\sum_{\ell =0}^{n-1} \int^\xi \exp \Big( {\e^{2\pi \im j/n} u(\xi  - t)} \Big) \e^{2\pi \im \ell/n} \exp \Big( {(\e^{2\pi \im \ell/n}  - \e^{2\pi \im j/n} )u(\xi  - t)} \Big) \\
& \;\;\;\quad\qquad\times \bigg( \sum\limits_{k = 0}^{n - 2} \psi _{k,1} (u,t )\frac{{\d^k \varepsilon _{j,N} (u,t)}}{u^k \d t^k }  \bigg)\d t.
\end{align*}
We now fix the constants of integration in a specific way. When $\ell \ne j$, we integrate along the path $\mathscr{P}_{j,\ell}(\xi)$. For $\ell = j$, the integral is decomposed into a sum of $n-1$ integrals, each taken along a distinct path $\mathscr{P}_{j,\ell}(\xi)$ for $0 \leq \ell \leq n-1$, $\ell \ne j$. Note that due to the orientation of these paths, the signs of the integrals must be reversed. This leads us to the expression
\begin{gather}\label{inteq}
\begin{split}
& \varepsilon _{j,N} (u,\xi ) \\ &  = - \frac{1}{n}\frac{1}{{u^N }}\exp \Big( {\e^{2\pi \im j/n} u\xi } \Big)\sum\limits_{\ell \ne j}  \int_{\mathscr{P}_{j,\ell} (\xi )} \bigg( \frac{{\e^{2\pi \im j/n} }}{{n - 1}} + \e^{2\pi \im \ell/n} \exp \Big( { (\e^{2\pi \im \ell/n}  - \e^{2\pi \im j/n} )u(\xi  - t)} \Big) \bigg)\\
& \quad\qquad\qquad\qquad\qquad\qquad\qquad\times (\mathcal{P}_{j,N} (u,t) + \mathcal{Q}_{j,N} (u,t) + \mathcal{R}_{j,N} (u,t))\d t
\\ & \quad\; - \frac{1}{n}\sum\limits_{\ell \ne j}  \int_{\mathscr{P}_{j,\ell} (\xi )} \exp \Big( {\e^{2\pi \im j/n} u(\xi  - t)} \Big)\bigg( {\frac{{\e^{2\pi \im j/n} }}{{n - 1}} + \e^{2\pi \im \ell/n} \exp \Big( {(\e^{2\pi \im \ell/n}  - \e^{2\pi \im j/n} )u(\xi  - t)} \Big)} \bigg) \\
& \quad\qquad\qquad\times \bigg( \sum\limits_{k = 0}^{n - 2} \psi _{k,1} (u,t )\frac{{\d^k \varepsilon _{j,N} (u,t)}}{u^k \d t^k }  \bigg)\d t.
\end{split}
\end{gather}
We aim to demonstrate that the integral equation \eqref{inteq} possesses a unique solution $\varepsilon _{j,N} (u,\xi )$ with the properties specified in Lemma \ref{lemma31}. To this end, we solve \eqref{inteq} via successive approximation. We introduce a sequence of functions $h_{m,j,N}(u,\xi)$, with $m=0,1,2,\ldots$, subject to the conditions $\Re(u)>\sigma'$ and $\xi \in \Gamma_j(d)$, defined by
\begin{gather}\label{h0}
\begin{split}
& h_{0,j,N} (u,\xi ) \\ &  = - \frac{1}{n}\frac{1}{{u^N }}\exp \Big( {\e^{2\pi \im j/n} u\xi } \Big)\sum\limits_{\ell \ne j}  \int_{\mathscr{P}_{j,\ell} (\xi )} \bigg( \frac{{\e^{2\pi \im j/n} }}{{n - 1}} + \e^{2\pi \im \ell/n} \exp \Big( { (\e^{2\pi \im \ell/n}  - \e^{2\pi \im j/n} )u(\xi  - t)} \Big) \bigg)\\
& \quad\qquad\qquad\qquad\qquad\qquad\qquad\times (\mathcal{P}_{j,N} (u,t) + \mathcal{Q}_{j,N} (u,t) + \mathcal{R}_{j,N} (u,t))\d t,
\end{split}
\end{gather}
and
\begin{gather}\label{hm}
\begin{split}
& h_{m+1,j,N} (u,\xi ) \\ &  = - \frac{1}{n}\sum\limits_{\ell\ne j}  \int_{\mathscr{P}_{j,\ell} (\xi )} \exp \Big( {\e^{2\pi \im j/n} u(\xi  - t)} \Big)\bigg( {\frac{{\e^{2\pi \im j/n} }}{{n - 1}} + \e^{2\pi \im \ell/n} \exp \Big( {(\e^{2\pi \im \ell/n}  - \e^{2\pi \im j/n} )u(\xi  - t)} \Big)} \bigg) \\
& \quad\qquad\qquad\times \bigg( \sum\limits_{k = 0}^{n - 2} \psi _{k,1} (u,t )\frac{{\d^k h_{m,j,N} (u,t)}}{u^k \d t^k }  \bigg)\d t
\end{split}
\end{gather}
for $m\ge 0$. Differentiating equation \eqref{h0} $s$ times with respect to $\xi$, where $0 \leq s \leq n-2$, yields
\begin{align*}
& \frac{{\d^s h_{0,j,N} (u,\xi)}}{u^s \d\xi ^s } \\ &  = - \frac{1}{n}\frac{1}{{u^N }}\exp \Big( {\e^{2\pi \im j/n} u\xi } \Big)\sum\limits_{\ell \ne j}  \int_{\mathscr{P}_{j,\ell} (\xi )} \bigg( \frac{{\e^{2\pi \im j(s+1)/n} }}{{n - 1}} + \e^{2\pi \im \ell(s+1)/n} \exp \Big(  (\e^{2\pi \im \ell/n}  - \e^{2\pi \im j/n} )u(\xi  - t) \Big) \bigg)\\
& \quad\qquad\qquad\qquad\qquad\qquad\qquad\times (\mathcal{P}_{j,N} (u,t) + \mathcal{Q}_{j,N} (u,t) + \mathcal{R}_{j,N} (u,t))\d t.
\end{align*}
Hereafter, let $\sigma$ denote an arbitrary fixed real number greater than $\sigma'$. Note that, as a consequence of the definition of the path $\mathscr{P}_{j,\ell} (\xi)$, the inequality $\Re((\e^{2\pi \im \ell/n} - \e^{2\pi \im j/n})u(\xi - t)) \leq 0$ holds as $t$ traverses $\mathscr{P}_{j,\ell} (\xi)$. Using this inequality, we see that for $\Re(u)\ge \sigma$ and $\xi \in \Gamma_j(d)$, 
\begin{equation}\label{h0bound}
\left| \frac{{\d^s h_{0,j,N} (u,\xi)}}{u^s \d\xi ^s } \right| \le \left| \exp \Big( {\e^{2\pi \im j/n} u\xi } \Big) \right|\frac{{2(1 + \rho )}}{\rho }\frac{{c_8 }}{{n - 1}}\frac{1}{{\mathfrak{a}_n^N }}\frac{{(N + n + L)!}}{{d^N }}\frac{1}{{\left| u \right|^N }},
\end{equation}
in virtue of Lemmas \ref{lemma33} and \ref{lemma34}. In a similar manner, differentiating \eqref{hm} with $m=0$ $s$ times with respect to $\xi$, where $0 \leq s \leq n-2$, gives
\begin{align*}
& \frac{{\d^s h_{1,j,N} (u,\xi)}}{u^s \d\xi ^s } \\ &  = - \frac{1}{n}\sum\limits_{\ell \ne j}  \int_{\mathscr{P}_{j,\ell} (\xi )} \exp \Big( {\e^{2\pi \im j/n} u(\xi  - t)} \Big)\bigg( {\frac{{\e^{2\pi \im j(s+1)/n} }}{{n - 1}} + \e^{2\pi \im \ell(s+1)/n} \exp \Big( {(\e^{2\pi \im \ell/n}  - \e^{2\pi \im j/n} )u(\xi  - t)} \Big)} \bigg) \\
& \quad\qquad\qquad\times \bigg( \sum\limits_{k = 0}^{n - 2} \psi _{k,1} (u,t )\frac{{\d^k h_{0,j,N} (u,t)}}{u^k \d t^k }  \bigg)\d t.
\end{align*}
By appealing to the properties of the paths $\mathscr{P}_{j,\ell} (\xi)$ and the inequalities \eqref{h0bound}, we readily establish the following bounds:
\begin{align*}
\left| \frac{{\d^s h_{1,j,N} (u,\xi)}}{u^s \d\xi ^s }\right| \le\; & \left| \exp \Big( {\e^{2\pi \im j/n} u\xi } \Big) \right|\frac{{2(1 + \rho )}}{\rho }\frac{{c_8 }}{{(n - 1)^2 }}\frac{1}{{\mathfrak{a}_n^N }}\frac{{(N + n + L)!}}{{d^N }}\frac{1}{{\left| u \right|^N }}
\\ & \times \sum\limits_{\ell \ne j} \int_{\mathscr{P}_{j,\ell} (\xi )} \bigg( {\sum\limits_{k = 0}^{n - 2} {\left| {\psi _{k,1} (u,t)} \right|} } \bigg)\left| \d t \right|,
\end{align*}
provided that $\Re(u)\ge \sigma$ and $\xi \in \Gamma_j(d)$. Continuing the argument by induction, we deduce that for any $m\geq 1$:
\begin{gather}\label{hmbound}
\begin{split}
\left| {\frac{{\d^s h_{m,j,N} (u,\xi )}}{{u^s \d\xi ^s }}} \right| \le \; & \left| \exp \Big( {\e^{2\pi \im j/n} u\xi } \Big) \right|\frac{{2(1 + \rho )}}{\rho }\frac{c_8}{{(n - 1)^2 }}\frac{1}{{\mathfrak{a}_n^N }}\frac{{(N + n + L)!}}{{d^N }}\frac{1}{{\left| u \right|^N }}\\ & \times \sum\limits_{\ell\ne j} \frac{1}{{m!}}\bigg( {\int_{\mathscr{P}_{j,\ell} (\xi )} {\bigg( {\sum\limits_{k = 0}^{n - 2} {\left| {\psi _{k,1} (u,t)} \right|} } \bigg)\left| \d t \right|} } \bigg)^m ,
\end{split}
\end{gather}
provided $\Re(u)\ge \sigma$ and $\xi \in \Gamma_j(d)$, with the key observation in the induction step being the identity
\[
\bigg(\sum\limits_{k = 0}^{n - 2} {\left| {\psi _{k,1} (u,t)} \right|} \bigg)\left| \d t \right| = -\d\bigg( {\int_{\mathscr{P}_{j,p} (t)} {\bigg( {\sum\limits_{k = 0}^{n - 2} {\left| {\psi _{k,1} (u,w)} \right|} } \bigg)\left| \d w \right|} } \bigg), \quad p\neq j.
\]
We now define
\begin{equation}\label{epsdef}
\varepsilon _{j,N} (u,\xi ) = \sum\limits_{m = 0}^\infty h_{m,j,N} (u,\xi ).
\end{equation}
It is evident from the estimates \eqref{h0bound} and \eqref{hmbound} that the series \eqref{epsdef} converges uniformly with respect to $\xi$ in $\Gamma_j(d)$ and also converges uniformly with respect to $u$ for $\Re(u)\ge \sigma$. Consequently, given that each $h_{m,j,N}(u,\xi)$ is analytic with respect to $\xi$ within $\Gamma_j(d)$ and also analytic with respect to $u$ for $\Re(u)>\sigma'$, it follows that the $\varepsilon _{j,N} (u,\xi )$ defined by the series \eqref{epsdef} is an analytic function of $\xi \in \Gamma_j(d)$ and an analytic function of $u$ for $\Re(u)>\sigma'$. Given that the function $\varepsilon_{j,N}(u,\xi)$ is continuous in $(u, \xi)$ and analytic in each of its variables, it constitutes an analytic function in $\left\{u: \Re(u) > \sigma'\right\} \times \Gamma_j(d)$. Furthermore, term-wise differentiation of the series \eqref{epsdef} is permissible, and using \eqref{h0bound} and \eqref{hmbound}, we obtain that for $0\leq s\leq n-2$,
\begin{align*}
\left| \frac{{\d^s \varepsilon _{j,N} (u,\xi )}}{{u^s \d\xi ^s }}\right| = \left| \sum\limits_{m = 0}^\infty \frac{\d^s h_{m,j,N} (u,\xi )}{{u^s \d\xi ^s }} \right| \le \; &\left| \exp \Big( {\e^{2\pi \im j/n} u\xi } \Big) \right|\frac{{2(1 + \rho )}}{\rho }\frac{{c_8 }}{{n - 1}}\frac{1}{{\mathfrak{a}_n^N }}\frac{{(N + n + L)!}}{{d^N }}\frac{1}{{\left| u \right|^N }} \\ & \times \exp \bigg( {\sum\limits_{\ell \ne j} {\int_{\mathscr{P}_{j,\ell} (\xi )} {\bigg( {\sum\limits_{k = 0}^{n - 2} {\left| {\psi _{k,1} (u,t)} \right|} } \bigg)\left| \d t \right|} } } \bigg),
\end{align*}
whenever $\Re(u)\ge \sigma$ and $\xi \in \Gamma_j(d)$. Under these assumptions, employing Lemmas \ref{lemma1} and \ref{lemma34}, we obtain
\[
\exp \bigg( \sum\limits_{\ell\ne j} \int_{\mathscr{P}_{j,\ell} (\xi )} \bigg( \sum\limits_{k = 0}^{n - 2} \left| \psi _{k,1} (u,t) \right|\bigg)\left| \d t \right|\bigg) \le \exp \bigg( \frac{2(1 + \rho )}{\rho }(n - 1)^2 c_1 \frac{1}{\mathfrak{a}_n }\frac{1}{d}\bigg).
\]
Consequently,
\begin{equation}\label{eq24}
\left| \frac{\d^s \varepsilon _{j,N} (u,\xi )}{u^s \d\xi ^s } \right|\le \left| \exp \Big( {\e^{2\pi \im j/n} u\xi } \Big) \right|
c_9 \frac{1}{{\mathfrak{a}_n^N }}\frac{{(N + n + L)!}}{{d^N }}\frac{1}{{\left| u \right|^N }}
\end{equation}
provided $\Re(u)\ge \sigma$ and $\xi \in \Gamma_j(d)$, with
\[
c_9  = \frac{{2(1 + \rho )}}{\rho }\frac{{c_7 }}{{n - 1}}\exp \bigg( {\frac{{2(1 + \rho )}}{\rho }(n - 1)^2 c_1 \frac{1}{\mathfrak{a}_n}\frac{1}{d}} \bigg).
\]
Using the properties of the paths $\mathscr{P}_{j,\ell} (\xi)$, along with Lemmas \ref{lemma33}, \ref{lemma1}, and \ref{lemma34}, as well as inequality \eqref{eq24}, we derive enhanced bounds from \eqref{inteq} as follows:
\begin{equation}\label{eq25}
 \left| {\frac{{\d^s \varepsilon _{j,N} (u,\xi )}}{{u^s \d\xi ^s }}} \right| \le \left|\exp \Big( {\e^{2\pi \im j/n} u\xi } \Big) \right| c_6 \frac{1}{{\mathfrak{a}_n^N }}\frac{{(N + n + L)!}}{{d^N }}\frac{1}{{\left| u \right|^N }}\sum\limits_{\ell \ne j} {\frac{1}{{(\max (1,\Re (\xi \e^{ - \im\theta _{j,\ell} } )))^\rho  }}} 
\end{equation}
under the conditions $\Re(u)\ge \sigma$ and $\xi \in \Gamma_j(d)$, with
\[
c_6 = \frac{{2(1 + \rho )}}{\rho }\left( {\frac{{c_8 }}{{n - 1}} + \frac{{c_1 c_9 }}{{\mathfrak{a}_N d}}} \right).
\]
From \eqref{theta1} and \eqref{theta2}, we infer that 
\begin{equation}\label{thetaineq}
|\theta_j-\theta_{j,\ell}|<\frac{\pi}{2}.
\end{equation}
This observation, when combined with \eqref{eq25}, shows that $\varepsilon _{j,N} (u,\xi )$ satisfies the limit conditions specified in \eqref{eq26}. This concludes the proof of existence.

To establish uniqueness, let us assume the existence of another analytic solution $\widetilde{\varepsilon}_{j,N} (u,\xi )$ for each $0\le j\le n-1$ and $N\ge 1$, satisfying the limit conditions \eqref{eq26}. If we define $\upsilon_{j,N} (u,\xi )=\varepsilon_{j,N} (u,\xi )-\widetilde{\varepsilon}_{j,N} (u,\xi )$ for $(u,\xi)\in\left\{u: \Re(u) > \sigma'\right\} \times \Gamma_j(d)$, then $\upsilon_{j,N} (u,\xi )$ satisfies the $n^{\text{th}}$-order linear homogeneous equation
\[
\frac{\d^n \upsilon _{j,N} (u,\xi )}{\d\xi^n } - u^n \upsilon _{j,N} (u,\xi ) = u^{n-1} \sum\limits_{k = 0}^{n - 2} \psi _{k,1} (u,\xi )\frac{\d^k \upsilon _{j,N} (u,\xi )}{u^k \d\xi^k }.
\]
Furthermore, 
\begin{equation}\label{eq28}
\lim_{t \to  + \infty } \left[ \exp \Big(  - \e^{2\pi \im j/n} u\zeta \Big)\frac{\d^s \upsilon _{j,N} (u,\zeta )}{u^s \d\zeta ^s }\right]_{\zeta  = \xi  + t\e^{\im\theta _j } }  = 0\qquad (0 \le s \le n - 2),
\end{equation}
provided that $\Re(u)\ge \sigma$ and $\xi \in \Gamma_j(d)$. Under the temporary assumption that $u$ is real, employing the method of variation of parameters shows that $\upsilon_{j,N}(u,\xi)$ satisfies the following integral equation:
\begin{gather}\label{inteq2}
\begin{split}
& \upsilon _{j,N} (u,\xi ) \\ &  =  - \frac{1}{n}  \int_{\mathscr{P}_j (\xi )} \exp \Big( {\e^{2\pi \im j/n} u(\xi  - t)} \Big)\bigg( {\frac{{\e^{2\pi \im j/n} }}{{n - 1}} + \sum\limits_{\ell \ne j}\e^{2\pi \im \ell/n} \exp \Big( {(\e^{2\pi \im \ell/n}  - \e^{2\pi \im j/n} )u(\xi  - t)} \Big)} \bigg)\\
& \quad\qquad\times \bigg( \sum\limits_{k = 0}^{n - 2} \psi _{k,1} (u,t )\frac{{\d^k \upsilon _{j,N} (u,t)}}{u^k \d t^k }  \bigg)\d t
\\ &\quad\; +\sum_{k=0}^{n-1}\lambda_{k,j,N}(u) \exp \Big( \e^{2\pi \im k /n} u\xi  \Big),
\end{split}
\end{gather}
where the $\lambda_{k,j,N}(u)$ are some functions of $u$ that are independent of $\xi$. The convergence of the integrals in \eqref{inteq2} is assured by the definition of $\mathscr{P}_j (\xi )$, \eqref{thetaineq}, \eqref{eq28}, and Lemmas \ref{lemma1} and \ref{lemma5}.
Multiplying both sides of \eqref{inteq2} by $\exp(  - \e^{2\pi \im j/n} u\xi )$, substituting $\xi$ with $\xi +w \e^{\im\theta _j }$, and letting $w\to+\infty$ shows that the functions  $\lambda_{k,j,N}(u)$ must all be identically zero. Then, by differentiating \eqref{inteq2} $s$ times with respect to $\xi$, where $0 \leq s \leq n-2$, we obtain
\begin{gather}\label{inteq3}
\begin{split}
& \frac{{\d^s \upsilon_{j,N} (u,\xi)}}{u^s \d\xi ^s } \\ &  = - \frac{1}{n}  \int_{\mathscr{P}_{j} (\xi )} \exp \Big( {\e^{2\pi \im j/n} u(\xi  - t)} \Big)\bigg( {\frac{{\e^{2\pi \im j(s+1)/n} }}{{n - 1}} +\sum\limits_{\ell \ne j} \e^{2\pi \im \ell(s+1)/n} \exp \Big( {(\e^{2\pi \im \ell/n}  - \e^{2\pi \im j/n} )u(\xi  - t)} \Big)} \bigg) \\
& \quad\qquad\times \bigg( \sum\limits_{k = 0}^{n - 2} \psi _{k,1} (u,t )\frac{{\d^k \upsilon_{j,N} (u,t)}}{u^k \d t^k }  \bigg)\d t.
\end{split}
\end{gather}
Given fixed values for $u$ and $\xi$, equation \eqref{eq28} implies that for any $t\in \mathscr{P}_j(\xi)$, the following inequality holds:
\[
\left|\exp \Big( { - \e^{2\pi \im j/n} ut} \Big)\frac{\d^s \upsilon _{j,N} (u,t)}{u^s \d t^s }\right| \le c_{10} .
\]
Here $c_{10}$ is a suitable constant that is independent of both $t$ and $s$. Successive re-substitutions on the right-hand side of \eqref{inteq3} yield
\[
\left|\frac{\d^s \upsilon _{j,N} (u,\xi )}{u^s \d\xi ^s }\right| \le \left| \exp \Big(  - \e^{2\pi \im j/n} u\xi  \Big)\right|c_{10} \frac{1}{m!}\bigg( \int_{\mathscr{P}_j (\xi )} \bigg( \sum\limits_{k = 0}^{n - 2} \left| {\psi _{k,1} (u,t)} \right| \bigg)\left| \d t \right| \bigg)^m ,
\]
where $m$ is an arbitrary non-negative integer. Letting $s=0$ and $m\to +\infty$, we see that $\upsilon _{j,N} (u,\xi )$ is zero. Consequently, we establish that $\widetilde{\varepsilon}_{j,N} (u,\xi )=\varepsilon_{j,N} (u,\xi )$ holds true for every $u\ge \sigma$ and $\xi \in \Gamma_j(d)$. The requirement for $u$ to be real can be removed through analytic continuation. With this, we conclude the proof of Lemma \ref{lemma31}.
\end{proof}

We are now in a position to prove Proposition \ref{prop2}.

\begin{proof}[Proof of Proposition \ref{prop2}]
Using \eqref{ahatrec}, it can be verified that for each $0\le j\le n-1$ and $N\ge 1$,
\[
\exp \Big( {\e^{2\pi \im j/n} u\xi } \Big)\frac{\widehat{\mathsf{A}}_{j,N} (\xi )}{u^N}  + \varepsilon _{j,N+1} (u,\xi )
\]
also satisfies the equation \eqref{epsilonode} alongside
$\varepsilon _{j,N} (u,\xi )$. Additionally, according to Lemma \ref{lemma32}, it fulfils the limit conditions \eqref{eq26}. Thus, by the uniqueness property, it follows that
\[
\varepsilon _{j,N} (u,\xi ) = \exp \Big( {\e^{2\pi \im j/n} u\xi } \Big)\frac{\widehat{\mathsf{A}}_{j,N} (\xi )}{u^N}  + \varepsilon _{j,N+1} (u,\xi ).
\]
Accordingly, by choosing the $\varepsilon _{j,N} (u,\xi )$ in \eqref{eq11} to be those given by Lemma \ref{lemma31}, we can assert that the $W_{j,N}(u,\xi)$ are solutions of the differential equation \eqref{eq3} and that these solutions are independent of $N$. Consequently, by defining the $\eta_j(u,\xi )$ as
\[
  \eta_j(u,\xi )= \exp \Big( -\e^{2\pi \im j/n} u\xi -X_j(\xi) \Big) \varepsilon _{j,1} (u,\xi ),
\]
it is straightforward to verify that they fulfil all the properties specified in Proposition \ref{prop2}, with the remainder terms $R_{j,N}(u,\xi)$ expressed as
\[
R_{j,N}(u,\xi) = \exp \Big( -\e^{2\pi \im j/n} u\xi -X_j(\xi) \Big) \varepsilon _{j,N} (u,\xi ).
\]
In particular, the estimates in \eqref{eq29} follow from \eqref{epsbound} and \eqref{Xbound}, as they imply
\[
\left| R_{j,N} (u,\xi ) \right| \le \e^{\frac{2(1 + \rho )}{\rho }c_1 } c_6 \frac{1}{\mathfrak{a}_n^N}\frac{(N + n + L)!}{d^N }\frac{1}{\left| u \right|^N }\sum\limits_{\ell  \ne j} \frac{1}{(\max (1,\Re(\xi \e^{ - \im\theta _{j,\ell } } )))^\rho  }.
\]
\end{proof}

\section{Constructing the Borel transforms}\label{Boreltransformsec}
In this section, we construct the Borel transforms $F_j(t, \xi)$ of the formal solutions \eqref{eq4} and demonstrate their properties as outlined in Theorem \ref{thm1}. We start with the simple observation that if the Borel transforms $F_j(t, \xi)$ of the formal solutions \eqref{eq4} exist, they must be the inverse Laplace transforms of the functions $\eta_j(u, \xi)$, given in Proposition \ref{prop2}, with respect to $u$. Hence, for $t \ge 0$ and $\xi \in \Gamma_j(d)$, we define the functions $F_j(t, \xi)$ as
\[
F_j(t,\xi)=\frac{1}{2\pi \im} \lim_{U\to +\infty}\int_{\sigma-\im U}^{\sigma+\im U}\e^{u t} \eta_j(u, \xi)\d u,
\]
where $\sigma$ is any real number greater than $\sigma'$. Our aim is to show that these functions extend analytically to an appropriate neighbourhood of the positive real axis with respect to $t$, exhibiting at most exponential growth. Upon substituting the truncated expansions \eqref{eq30} of the functions $\eta_j(u, \xi)$ with $N$ being replaced by $N + 2$ and integrating term-by-term, we obtain
\begin{equation}\label{eq31}
F_j(t,\xi)=\sum_{m=0}^{N}\frac{\mathsf{A}_{j,m+1}(\xi)}{m!}t^m + \frac{1}{2\pi \im} \int_{\sigma-\im \infty}^{\sigma+\im \infty}\e^{u t} R_{j,N+2}(u, \xi)\d u.
\end{equation}
Note that, due to the bounds \eqref{eq29} for $R_{j,N+2}(u, \xi)$, the contour integrals converge absolutely, and the convergence is uniform with respect to $\xi \in \Gamma_j(d)$ for any $N \ge 0$. Consequently, the $F_j(t, \xi)$ are analytic functions of $\xi$ in $\Gamma_j(d)$. Differentiating both sides of \eqref{eq31} $N$ times with respect to $t$ yields
\begin{equation}\label{eq33}
\frac{\d^N F_j(t,\xi)}{\d t^N}=\mathsf{A}_{j,N+1}(\xi)+ \frac{1}{2\pi \im} \int_{\sigma-\im \infty}^{\sigma+\im \infty}\e^{u t} u^N R_{j,N+2}(u, \xi)\d u.
\end{equation}
From Proposition \ref{prop2}, we infer
\[
\left| \mathsf{A}_{j,N + 1} (\xi ) \right| = \mathop {\lim }\limits_{u \to  + \infty } u^{N + 1} \left| {R_{j,N + 1} (u,\xi )} \right| \le c_5 \frac{1}{\mathfrak{a}_n^{N + 1} }\frac{{(N + n + L + 1)!}}{{d^{N + 1} }}\sum\limits_{\ell  \ne j} {\frac{1}{(\max (1,\Re(\xi \e^{ - \im\theta _{j,\ell } } )))^\rho  }} .
\]
By combining these estimates with \eqref{eq29}, we obtain the following bounds:
\begin{align*}
&\left|\frac{\d^N F_j(t,\xi)}{\d t^N}\right|\le|\mathsf{A}_{j,N+1}(\xi)|+ \frac{1}{2\pi} \int_{\sigma-\im \infty}^{\sigma+\im \infty}\left|\e^{u t} u^N R_{j,N+2}(u, \xi)\right||\d u|
\\ & \le \bigg( \frac{1}{{N + n + L + 2}} + \frac{1}{{\mathfrak{a}_n d}}\e^{\sigma t} \frac{1}{{2\pi }}\int_{\sigma  - \im\infty }^{\sigma  + \im\infty } {\frac{{\left| {\d u} \right|}}{{\left| u \right|^2 }}}  \bigg)c_5 \frac{1}{{\mathfrak{a}_n^{N + 1} }}\frac{{(N + n + L + 2)!}}{{d^{N + 1} }}\sum\limits_{\ell  \ne j} {\frac{1}{{(\max (1,\Re (\xi\e^{ - \im\theta _{j,\ell } } )))^\rho}}} 
\\ & \le \left( {1 + \frac{1}{{\mathfrak{a}_n d}}\frac{1}{{2\sigma }}} \right)c_5 \frac{1}{{\mathfrak{a}_n^{N + 1} }}\frac{{(N + n + L + 2)!}}{{d^{N + 1} }}\e^{\sigma t} \sum\limits_{\ell  \ne j} {\frac{1}{{(\max (1,\Re (\xi\e^{ - \im\theta _{j,\ell } } )))^\rho}}} 
\end{align*}
for any $t\ge 0$, $\xi \in \Gamma_j(d)$, and $N\ge 0$. Accordingly,
\[
\limsup_{N \to  + \infty } \left| \frac{1}{N!}\frac{\d^N F_j (t,\xi )}{\d t^N } \right|^{1/N}  \le \frac{1}{\mathfrak{a}_n d}.
\]
Therefore, according to the Cauchy--Hadamard theorem, for any $t_0\ge 0$, the Taylor series
\begin{equation}\label{eq32}
\sum\limits_{m = 0}^\infty  \frac{1}{m!}\left[\frac{\d^m F_j (t,\xi )}{\d t^m } \right]_{t = t_0 } (t - t_0 )^m 
\end{equation}
converge absolutely within the open disc $|t-t_0|<\mathfrak{a}_n d$. As a result, the power series \eqref{eq32} provide the analytic continuation of the $F_j(t, \xi)$ into the domain $U(\mathfrak{a}_n d)$. Moreover, based on the established bounds for the $N$th derivatives, we deduce the uniform convergence of series \eqref{eq32} on $\Gamma_j(d)$. Consequently, the functions $F_j(t, \xi)$ are analytic with respect to $\xi$ for every fixed $t \in U(\mathfrak{a}_n d)$. Given their continuity in $(t, \xi)$ and analyticity in each respective variable, these functions are analytic functions within $U(\mathfrak{a}_n d) \times \Gamma_j(d)$.

By taking $t=0$ and letting $\sigma \to +\infty$ in \eqref{eq33}, it follows that
\[
\left[ \frac{\d^N F_j (t,\xi )}{\d t^N } \right]_{t = 0} = \mathsf{A}_{j,N+1}(\xi),
\]
thus confirming the validity of the power series expansions in \eqref{eq34}.

To establish the exponential-type bounds \eqref{Fbound} on $|F_j (t,\xi)|$, we can proceed as follows. Assume $0 < r < d$ and $t \in U(\mathfrak{a}_n r)$. When $\Re(t) \ge 0$, the following inequalities hold:
\begin{align*}
& \left| F_j (t,\xi ) \right|  \le \sum\limits_{m = 0}^\infty  \frac{1}{{m!}}\left| \left[ \frac{\d^m F_j (t,\xi )}{\d t^m } \right]_{t = \Re(t)} \right|\left| {t - \Re (t)} \right|^m 
\\ & \le \left( {1 + \frac{1}{\mathfrak{a}_n d}\frac{1}{{2\sigma }}} \right)\frac{{c_5 }}{{\mathfrak{a}_n d}}\e^{\sigma \Re(t)} \sum\limits_{\ell  \ne j} {\frac{1}{{(\max (1,\Re(\xi \e^{ - \im\theta _{j,\ell } } )))^\rho  }}} \sum\limits_{m = 0}^\infty  {\frac{{(m + n + L + 2)!}}{{m!}}\left( {\frac{{\left| {t - \Re(t)} \right|}}{{\mathfrak{a}_n d}}} \right)^m } 
\\ & \le \left( {1 + \frac{1}{{\mathfrak{a}_n d}}\frac{1}{{2\sigma }}} \right)\frac{{c_5 }}{{\mathfrak{a}_n d}}\e^{\sigma \Re(t)} \sum\limits_{\ell  \ne j} {\frac{1}{{(\max (1,\Re(\xi \e^{ - \im\theta _{j,\ell } } )))^\rho  }}} \sum\limits_{m = 0}^\infty  {\frac{{(m + n + L + 2)!}}{{m!}}\left( {\frac{r}{d}} \right)^m } 
\\ & = \bigg( \left( {1 + \frac{1}{{\mathfrak{a}_n d}}\frac{1}{{2\sigma }}} \right)\frac{c_5 }{\mathfrak{a}_n d}\sum\limits_{m = 0}^\infty  \frac{{(m + n + L + 2)!}}{{m!}}\left( {\frac{r}{d}} \right)^m  \bigg)\e^{\sigma \Re(t)} \sum\limits_{\ell  \ne j} \frac{1}{{(\max (1,\Re(\xi \e^{ - \im\theta _{j,\ell } } )))^\rho  }} ,
\end{align*}
where the infinite series is convergent due to the constraint $0 < r < d$. For $\Re(t) < 0$, a similar series of estimates can be repeated, replacing $\Re(t)$ with $0$. Overall, it is found that with the choice
\[
K(\sigma ,r) = \bigg( \left( {1 + \frac{1}{{\mathfrak{a}_n d}}\frac{1}{{2\sigma }}} \right)\frac{{c_5 }}{{\mathfrak{a}_n d}}\sum\limits_{m = 0}^\infty \frac{{(m + n + L + 2)!}}{{m!}}\left( {\frac{r}{d}} \right)^m  \bigg)\e^{\sigma \mathfrak{a}_n r} ,
\]
the estimates \eqref{Fbound} hold for $(t, \xi) \in U(\mathfrak{a}_n r) \times \Gamma_j(d)$.

The asymptotic expansions in \eqref{eq35} follow directly from Proposition \ref{prop2}. This concludes the proof of Theorem \ref{thm1}.

\section{An application}\label{application}

In this section, we demonstrate the applicability our theory by employing it to the following $n^{\text{th}}$-order ($n\ge 2$) Airy-type equation with a large parameter $u$:
\begin{equation}\label{eq36}
\bigg(  - \frac{\d^n }{\d z^n } +  u^n z\bigg)w(u,z) = 0.
\end{equation}
When $n=2$, equation \eqref{eq36} reduces to the standard Airy equation with a large parameter (see, for instance, \cite{Takei2017}). We choose the domain $\mathbf{D}$ to be the Riemann surface associated with the complex $n^{\text{th}}$ root. Application of the transformation
\[
\xi  = \int_0^z t^{1/n} \d t =\frac{n}{n+1}z^{1+1/n},\quad W(u,\xi) = z^{(1-1/n)/2} w(u,z)
\]
brings \eqref{eq36} into the standard form \eqref{eq3}, where
\begin{gather}\label{eq37}
\begin{split}
\psi _k (u,\xi )&= \delta _{0,k}  - \frac{1}{{u^{n - k} }}\frac{(-1)^{n-k}}{{z^{(n - k)(1 + 1/n)} }}\sum\limits_{p = k}^n {\binom{n}{p}\left( {\tfrac{1}{2} - \tfrac{1}{{2n}}} \right)_{n - p} \B_{p,k} \left( {1,-\tfrac{1}{n}, \ldots ,\left( { - \tfrac{1}{n}} \right)_{p - k} } \right)} 
\\ &=\delta _{0,k}  - \frac{1}{{u^{n - k} }}\frac{1}{{\xi ^{n - k} }}\left( {\frac{-n}{{n + 1}}} \right)^{n - k} \sum\limits_{p = k}^n {\binom{n}{p}\left( {\tfrac{1}{2} - \tfrac{1}{{2n}}} \right)_{n - p} \B_{p,k} \left( {1,-\tfrac{1}{n}, \ldots , \left( { - \tfrac{1}{n}} \right)_{p - k} } \right)} 
\end{split}
\end{gather}
(cf. \eqref{eq45} and \eqref{eq46}). Here $(w)_m =\Gamma(w+m)/\Gamma(w)$ denotes the Pochhammer symbol. The domain $\mathbf{D}$ is mapped into the Riemann surface $\mathbf{G}$ of the complex $(n+1)^{\text{th}}$ root.

To apply Theorem \ref{thm1}, we have to ensure that Condition \ref{cond} is satisfied. First, we have to define a suitable subdomain $\Delta$ of $\mathbf{G}$. To this end, observe that each $\psi_k(u,\xi)$ becomes unbounded as $|\xi| \to 0$. Hence, it is natural to fix $0 < \varepsilon \ll 1$ and define
\[
\Delta = \{ \xi: \xi \in \mathbf{G},\, |\xi|> \varepsilon \}.
\]
The functions $\psi_k(u,\xi)$ are analytic and bounded throughout $\Delta$,
and Condition \ref{cond} holds true with $\rho = 1$, and any $\sigma' > 0$ and $d' > 0$. (Note that the Borel transforms $\Psi_k(t,\xi)$ are polynomial functions in $t$.)

In what follows, we focus primarily on the solution $W_0(u,\xi)$. A similar approach can be used to address the remaining $n-1$ solutions. The angles $\theta_{0,\ell}$ and $\theta_0$ are chosen as follows:
\[
\theta _{0,\ell}  =\frac{(2+\sgn (\ell))n - 2\ell}{2n}\pi,\quad \theta_0=\pi.
\]
For an arbitrary $d>0$, we specify the subdomain $\Gamma_0(d)$ of $\Delta$ as follows.
\begin{enumerate}[(i)]\itemsep0.5em
\item A branch cut is introduced along the positive real line on the principal sheet of $\mathbf{G}$, and the phase of $\xi$ is specified such that $0 < \arg \xi < 2\pi$.
\item Subsequently, we remove the sectors $0 < \arg \xi \le \frac{n-2}{2n}\pi$ and $\frac{3n+2}{2n}\pi \le \arg \xi <2\pi$.
\item Finally, the closed $d + \varepsilon$ neighbourhoods of the rays $\arg \xi =\frac{n-2}{2n}\pi$ and $\arg \xi =\frac{3n+2}{2n}\pi$ are deleted.
\end{enumerate}
We leave it to the reader to verify that $\Gamma_0(d)$ fulfils the requirements outlined in Section \ref{section1}. In the special case that $n=3$, this domain, along with its corresponding domain $D_0(d)$ on $\mathbf{D}$, is depicted in Figure \ref{Figure2}.

\begin{figure}[t]
\begin{subfigure}[c]{0.42\textwidth}
\centering
\includegraphics[width=\textwidth]{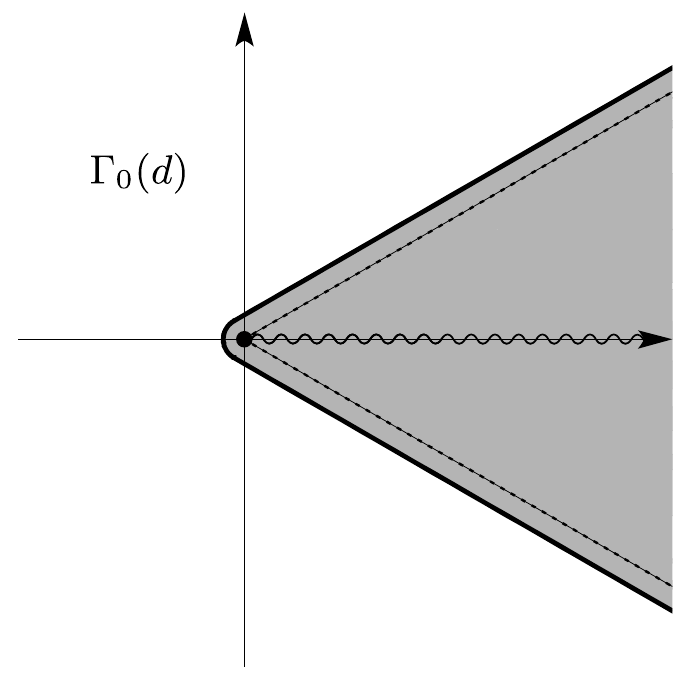}
\caption{}
\end{subfigure}
\hfill
\begin{subfigure}[c]{0.42\textwidth}
\centering 
\includegraphics[width=\textwidth]{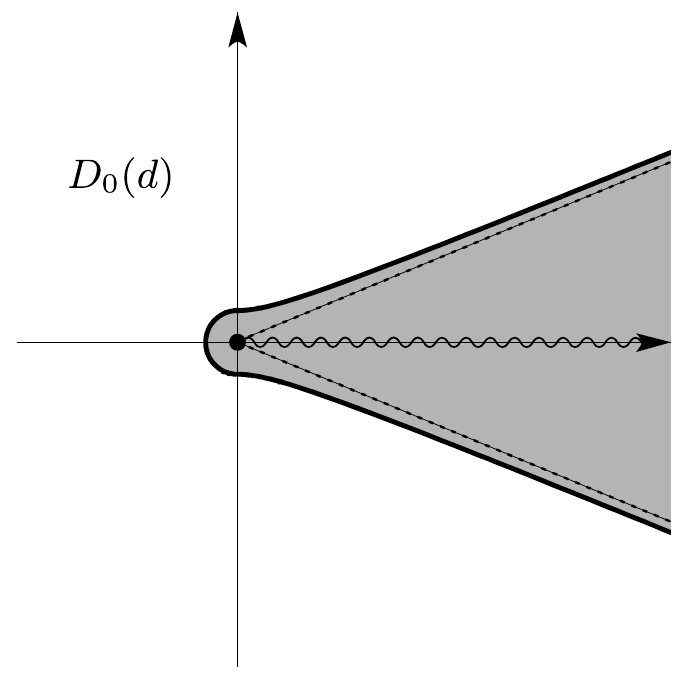}
\caption{}
\end{subfigure}
\caption{(a) Domain $\Gamma_0(d)$ (unshaded). (b) Domain $D_0(d)$ (unshaded).}
\label{Figure2}
\end{figure}

Employing Theorem \ref{thm1}, we find that the differential equation \eqref{eq3}, with the $\psi_k(u,\xi)$ as given in \eqref{eq37}, has a solution of the form
\begin{equation}\label{eq41}
W_0(u,\xi) =\e^{u\xi}\big( 1 + \eta_j(u,\xi) \big),
\end{equation}
which is analytic in $\left\{u : \Re(u) > 0\right\} \times \Gamma_0(d)$, and satisfies the limit conditions \eqref{eq19}. Furthermore, for each $\sigma > 0$,
\begin{align*}
W_0(u,\xi) & \sim \e^{u\xi} \bigg( 1 + \sum\limits_{m = 1}^\infty  \frac{\mathsf{A}_{0,m} (\xi )}{u^m } \bigg)
\\ &= \e^{u\xi} \bigg(1 + \frac{{(n - 1)(2n + 1)}}{{24(n + 1)\xi }}\frac{1}{u} + \frac{{(n - 1)(2n + 1)(2n^2  + 23n + 23)}}{{1152(n + 1)^2 \xi ^2 }}\frac{1}{{u^2 }} +  \ldots \bigg)
\end{align*}
as $u \to \infty$ in the half-plane $\Re(u) \ge \sigma$, uniformly with respect to $\xi \in \Gamma_0(d)$. This asymptotic expansion is Borel-summable, and the Borel transform $F_0(t,\xi)$ is an analytic function in $U(\mathfrak{a}_n d) \times \Gamma_0(d)$. The remaining $n-1$ solutions can be chosen as $W_j(u,\xi)=W_0(u, \xi\e^{2\pi\im j/n})$ for $\xi \in \Gamma_j(d) = \e^{-2\pi\im j/n}\Gamma_0(d)$ with $j = 1,2,\ldots,n-1$.

It is also possible to address this specific problem using the method of steepest descents for integrals \cite{Bennett2018}. We believe it is constructive to include some details of this approach and compare the results with those obtained via Theorem \ref{thm1}. We observe that the function
\begin{equation}\label{eq38}
y(x) = \sqrt {\frac{n}{2\pi }} \int_{ + \infty }^{\infty \e^{2\pi \im/(n + 1)} } \exp \left(  - \frac{t^{n + 1}}{n + 1} + xt \right)\d t 
\end{equation}
is an entire function of the complex variable $x$ and satisfies the $n^{\text{th}}$-order equation
\[
\bigg(  - \frac{\d^n }{\d x^n } + x\bigg)y(x) = 0.
\]
The prefactor $\sqrt{n/(2\pi)}$ in \eqref{eq38} is introduced for later convenience. By defining a new variable $\zeta$ using the equation
\[
\zeta  = \frac{n}{n + 1}x^{1 + 1/n},
\]
the function $y_0(\zeta)=y\big(\big(\frac{n+1}{n}\zeta\big)^{1-1/(n+1)}\big)$ becomes an analytic function of $\zeta$ on the Riemann surface associated with the complex $(n+1)^{\text{th}}$ root. Assuming $\arg \zeta = \pi$, we perform a change of integration variables in \eqref{eq38} from $t$ to $s$ via
\[
t = \left( \frac{n + 1}{n}\zeta  \right)^{1/(n + 1)} s.
\]
This leads us to the representation
\[
y_0(\zeta ) = \sqrt {\frac{n}{2\pi}} \left( \frac{n + 1}{n}\zeta\right)^{1/(n + 1)} \e^\zeta  \int_{\infty \e^{ - \pi \im/(n + 1)} }^{\infty \e^{\pi \im/(n + 1)} } \exp \left(  - \zeta \left( \frac{s^{n + 1}}{n} - \frac{n + 1}{n}s + 1 \right) \right)\d s.
\]
Using Cauchy's theorem, we deform the integration contour to traverse through the saddle point at $s = 1$. Thus,
\[
y_0(\zeta ) = \sqrt {\frac{n}{2\pi}} \left( \frac{n + 1}{n}\zeta\right)^{1/(n + 1)}   \e^\zeta  \int_{\mathscr{C}(\arg \zeta)} \exp \left(  - \zeta \left( \frac{s^{n + 1}}{n} - \frac{n + 1}{n}s + 1 \right) \right)\d s,
\]
where $\mathscr{C}(\arg \zeta)$ denotes the path of steepest descent passing through the saddle point at $s = 1$. Varying the phase of $\zeta$ results in an analytic continuation of $y_0(\zeta)$. This continuation remains valid as long as the path $\mathscr{C}(\arg \zeta)$ varies continuously with respect to $\arg \zeta$. When $n=3$, Figure \ref{Figure3} illustrates the path $\mathscr{C}(\arg \zeta)$ for different values of $\arg \zeta$. When $\arg \zeta =\frac{n-2}{2n}\pi$, the path connects to the saddle point $\e^{2\pi \im/n}$. Likewise, when $\arg \zeta =\frac{3n+2}{2n}\pi$, the path connects to the saddle point $\e^{2\pi \im(n-1)/n}$. Consequently, $\arg \zeta =\frac{n-2}{2n}\pi$ and $\arg \zeta =\frac{3n+2}{2n}\pi$ are Stokes lines for $y_0(\zeta)$. An application of the method of steepest descents then yields the asymptotic expansion
\begin{gather}\label{eq40}
\begin{split}
y_0(\zeta ) & \sim \left( \frac{n + 1}{n}\zeta\right)^{1/(n + 1)-1/2}\e^\zeta \sum_{m=0}^\infty \frac{a_m}{\zeta^m} = x^{(1/n-1)/2} \e^\zeta \sum_{m=0}^\infty \frac{a_m}{\zeta^m}
\\ & = x^{(1/n-1)/2} \e^{\zeta} \bigg(1 + \frac{{(n - 1)(2n + 1)}}{{24(n + 1) }}\frac{1}{\zeta} + \frac{{(n - 1)(2n + 1)(2n^2  + 23n + 23)}}{{1152(n + 1)^2 }}\frac{1}{{\zeta^2 }} +  \ldots \bigg),
\end{split}
\end{gather}
as $\zeta\to\infty$ in the sector $-\frac{\pi}{n}+\delta\le \arg \zeta \le \frac{2n+1}{n}\pi-\delta$, with any fixed $\delta>0$. The coefficients $a_m$ can be expressed using Perron's formula:
\[
a_m = \frac{1}{(2n + 2)^m m!}\left[\frac{\d^{2m} }{\d s^{2m}}\left(\frac{n(n + 1)}{2} \frac{(s - 1)^2 }{s^{n + 1} - (n + 1)s+ n} \right)^{m + 1/2}  \right]_{s= 1}
\]
(see, e.g., \cite[Eq. 11]{Bennett2018}).

\begin{figure}[t]
\centering
\includegraphics[width=0.4\textwidth]{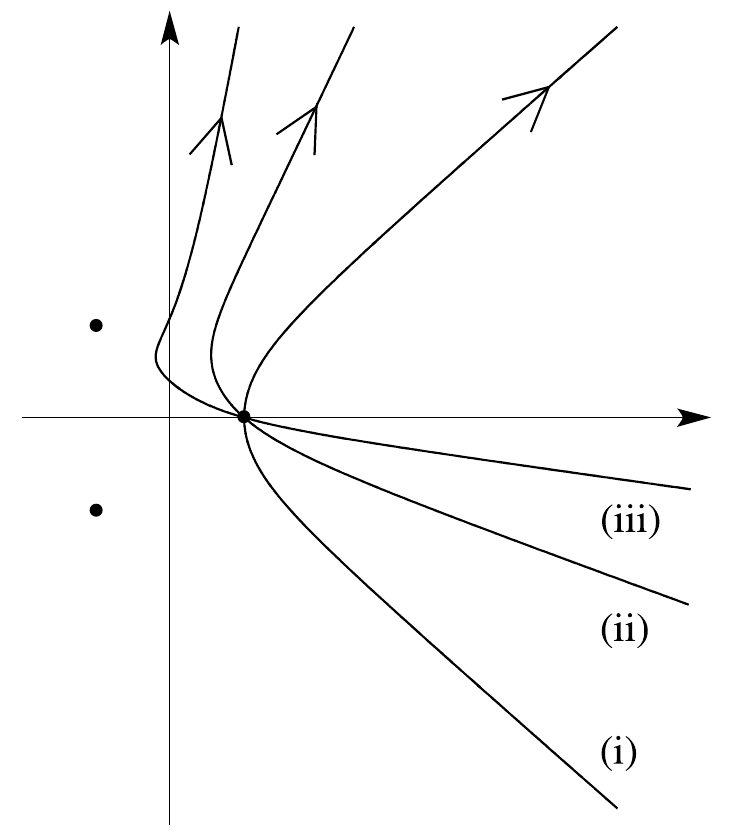}
\caption{The steepest descent contour $\mathscr{C}(\arg \zeta)$ through the saddle point $s=1$ when $n=3$ and (i)
$\arg \zeta =\pi$, (ii) $\arg \zeta =\frac{\pi}{2}$ and (iii) $\arg \zeta =\frac{\pi}{5}$. The bullet points represent the saddles $1$, $\e^{2\pi \im /3}$ and $\e^{4\pi \im /3}$.}
\label{Figure3}
\end{figure}

We can readily verify that the function $w(u,z) = y(u^{n/(n+1)} z)$ satisfies equation \eqref{eq36}. Consequently,
\[
W_0 (u,\xi ) = z^{(1-1/n)/2}w(u,z)=u^{1/2-1/(n + 1)}  z^{(1-1/n)/2}  y_0(u\xi )
\]
coincides with the solution \eqref{eq41}
obtained from Theorem \ref{thm1}. From \eqref{eq40},
\begin{equation}\label{eq39}
W_0 (u,\xi ) \sim \e^{u\xi}\sum\limits_{m = 0}^\infty \frac{a_m}{(u \xi)^m},
\end{equation}
as $u\xi \to\infty$ within the sector $-\frac{\pi}{n}+\delta\le \arg(u\xi) \le \frac{2n+1}{n}\pi-\delta$, for any fixed $\delta>0$. In particular, this implies $\mathsf{A}_{0,m} (\xi ) = a_m \xi ^{ - m}$. Therefore, for the asymptotic expansion \eqref{eq39} to remain valid as $u \to \infty$ within any half-plane $\Re(u)\ge \sigma>0$, $\xi$ must be confined within a sector $\frac{n-2}{2n}\pi+\delta\le\arg\xi\le\frac{3n+2}{2n}\pi-\delta$, with any fixed $\delta>0$, and $|\xi|$ must be bounded away from $0$. Such a set is properly contained in a $\Gamma_0(d)$, resulting from Theorem \ref{thm1} with a suitably chosen $d>0$.

\section{Discussion}\label{discussion}

We studied the Borel summability of formal solutions of certain $n^{\text{th}}$-order linear ordinary differential equations with a large parameter. We demonstrated that, given mild conditions on the potential functions of the equation, the formal solutions are Borel summable with respect to the large parameter in vast, unbounded domains of the independent variable. We established that the formal series expansions serve as asymptotic expansions, uniform with respect to the independent variable, for the Borel re-summed exact solutions. Additionally, we showed that the exact solutions can be expressed using factorial series in the parameter, and these expansions converge in half-planes, uniformly in the independent variable.

The theory presented here marks an initial step toward a global analysis of formal solutions of the differential equation \eqref{eq1}. The primary result of this paper demonstrates the Borel summability of formal solutions within a Stokes region, provided that the independent variable remains bounded away from Stokes curves emerging either from a zero of $f_{0,0}(z)$ or from a singularity of one of the functions $f_k(u,z)$. Importantly, it does not provide any information regarding connection formulae joining formal solutions across the Stokes curves.

In the following discussion, we briefly explore a potential extension of the theory to turning point problems. Turning points represent the simplest form of transition points. For illustration, consider the following third-order differential equation:
\begin{equation}\label{eq49}
\bigg(  - \frac{\d^3 }{\d z^3 } +  u^3 f(z)\bigg)w(u,z) = 0.
\end{equation}
Suppose that the potential function $f(z)$ is analytic in a domain $\mathbf{D}$ and $f(z)$ vanishes at exactly one point $z_0$, say, of $\mathbf{D}$. For the sake of simplicity, we assume that $z_0$ is a simple zero of $f(z)$. It is convenient to work in terms of the transformed variables $\zeta$ and $\mathcal{W}(u, \zeta)$ given by
\begin{equation}\label{eq50}
\frac{3}{4}\zeta^{4/3}  = \int_{z_0 }^z f^{1/3} (t)\d t ,\quad  \mathcal{W}(u,\zeta) = \zeta^{-1/3}f^{1/3}(z)w(u,z).
\end{equation}
Subsequently, equation \eqref{eq49} transforms into
\begin{equation}\label{eq51}
\bigg( -\frac{\d^3}{\d\zeta ^3 } + \phi(\zeta)\frac{\d}{\d\zeta } + u^3 \zeta  + \psi(\zeta)\bigg)\mathcal{W}(u,\zeta )=0,
\end{equation}
with
\[
 \phi (\zeta ) = \bigg( \frac{\d\widehat{f}^{-1/3} (z)}{{\d z}} \bigg)^2  -  2 \widehat{f}^{-1/3} (z) \frac{\d^2\widehat{f}^{-1/3} (z) }{\d z^2}, \quad \psi (\zeta ) = -  \widehat{f}^{-2/3} (z) \frac{{\d^3 \widehat{f}^{-1/3} (z)}}{{\d z^3 }}
\]
and $\widehat{f}(z) = \zeta ^{ - 1} f (z)$. The functions $\phi (\zeta )$ and $\psi (\zeta )$ are analytic in the corresponding $\zeta$ domain $\mathbf{H}$, say. The transformation \eqref{eq50} maps the simple turning point $z_0$ into the origin in $\mathbf{H}$, and the four Stokes curves issuing from $z_0$ are mapped into the rays $\arg \zeta  = 0, \frac{\pi }{2}\im, \pi \im,\frac{3\pi}{2}\im$. Let $Y(x)$ be any solution of the third-order Airy-type equation
\begin{equation}\label{eq53}
\bigg(  - \frac{\d^3}{\d x^3} + x\bigg)Y(x) = 0.
\end{equation}
It can be verified by direct substitution that equation \eqref{eq51} has a formal solution of the form
\begin{equation}\label{eq52}
\mathcal{W}(u,\zeta ) = Y(u^{3/4} \zeta )\sum\limits_{m = 0}^\infty  \frac{A_m (\zeta )}{u^{3m} } + u^{-9/4} Y'(u^{3/4} \zeta )\sum\limits_{m = 0}^\infty \frac{B_m (\zeta )}{u^{3m} } + u^{-3/2} Y''(u^{3/4} \zeta )\sum\limits_{m = 0}^\infty \frac{C_m (\zeta )}{u^{3m}} .
\end{equation}
The coefficients $A_m (\zeta)$, $B_m (\zeta)$ and $C_m (\zeta)$, which are analytic functions of $\zeta$ in $\mathbf{H}$, can be determined recursively via
\begin{align*}
A_0 (\zeta ) & = 1,\quad C_0 (\zeta ) = \frac{1}{3}\zeta ^{ - 2/3} \int_0^\zeta t^{ - 1/3} \phi (t)\d t ,\\
\\ B_0 (\zeta ) & =  - \frac{1}{3}\zeta ^{ - 1/3} \int_0^\zeta  t^{ - 2/3} \left( 3(tC'_0 (t))' - \phi (t)tC_0 (t) - \psi (t) \right) \d t ,
\end{align*}
and
\begin{align*}
 A_m (\zeta ) =  &- \frac{1}{3}\int^\zeta  \left( 3B''_{m-1} (t) - \phi (t)B_{m-1} (t) + C'''_{m-1} (t) - \phi (t)C'_{m-1} (t) - \psi (t)C_{m-1} (t) \right)\d t, \\
C_m (\zeta ) = & - \frac{1}{3}\zeta ^{ - 2/3} \int_0^\zeta  t^{ - 1/3} \left( 3A''_m (t) - \phi (t)A_m (t) + B'''_{m-1} (t) - \phi (t)B'_{m-1} (t) - \psi (t)B_{m-1} (t) \right)\d t,
\\ B_m (\zeta ) =  &- \frac{1}{3}\zeta ^{ - 1/3} \int_0^\zeta  t^{ - 2/3} \left( A'''_m (t) - \phi (t)A'_m (t) - \psi (t)A_m (t)+ 3(tC'_m (t))' - \phi (t)tC_m (t) \right)\d t,
\end{align*}
for $m\ge 1$. The constants of integration in the expression for $A_m (\zeta )$ can be chosen arbitrarily. We anticipate that, assuming appropriate conditions on the functions $\phi(\zeta)$ and $\psi(\zeta)$, and for sufficiently large values of $\Re(u)$, the three formal series in \eqref{eq52} are Borel-summable within a specific, well-defined subdomain of $\mathbf{H}$ that includes $\zeta=0$ and the associated Stokes rays $\arg \zeta  = 0, \frac{\pi }{2}\im, \pi \im,\frac{3\pi}{2}\im$. This could potentially be confirmed by employing an argument similar to that used in the second-order case \cite{Dunster2001}. As a result, the continuation formulae for formal solutions of the form \eqref{eq52} can be derived directly from those corresponding to the solutions of \eqref{eq53} (see, e.g., \cite{Ohyama1995}). Subsequently, by employing the transformation 
\[
\xi = \frac{3}{4}\zeta^{4/3} ,\quad  W(u,\xi)= \zeta^{1/3}\mathcal{W}(u,\zeta),
\]
connection formulae can be established for formal solutions of the form \eqref{eq4}.

Finally, extending the results of the paper to differential equations of the type \eqref{eq1}, where the terms corresponding to $m = 0$ in the expansions \eqref{eq23} of the functions $f_k(u, z)$ (for $k \neq 0$) need not be zero, would be of great interest. We anticipate that the emergence of potential new Stokes curves could pose challenges to the analysis.

\section*{Acknowledgement} The author would like to thank the referee for the careful and thorough review of the paper, which has significantly improved its presentation. This research was conducted in part during a visit to the Okinawa Institute of Science and Technology (OIST) as part of the Theoretical Sciences Visiting Program (TSVP). The author's research was supported by the JSPS KAKENHI Grants No. JP21F21020 and No. 22H01146.

\appendix

\section{The transformation of the equation}\label{appA}

In this appendix, we derive expressions that relate the functions $\psi_k(u, \xi)$ and $\psi_{k,m}(\xi)$ to their respective counterparts $f_k(u, z)$ and $f_{k,m}(z)$.
The basis for the derivation is the following lemma.

\begin{lemma}\label{lemma8}
For any $p\ge 1$, the following identity between differential operators holds:
\begin{equation}\label{eq42}
\frac{\d^p }{\d z^p } = \sum\limits_{r = 1}^p \B_{p,r} \bigg( f_{0,0}^{1/n} (z),\frac{\d f_{0,0}^{1/n} (z)}{\d z}, \ldots ,\frac{\d^{p - r} f_{0,0}^{1/n} (z)}{\d z^{p - r}}  \bigg)\frac{\d^r }{\d\xi ^r }.
\end{equation}
\end{lemma}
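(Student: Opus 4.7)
The plan is to recognize this identity as the classical Fa\`a di Bruno formula for the change of variable $z \mapsto \xi(z)$ defined in \eqref{eq2}, rewritten as an equality of linear differential operators. First I would observe that \eqref{eq2} gives $\xi'(z) = f_{0,0}^{1/n}(z)$, and hence, by induction,
\[
\xi^{(k)}(z) = \frac{\d^{k-1} f_{0,0}^{1/n}(z)}{\d z^{k-1}} \qquad (k \ge 1).
\]
In particular, the $(p-r+1)$ arguments of $\B_{p,r}$ appearing on the right-hand side of \eqref{eq42} are precisely $\xi'(z), \xi''(z), \ldots, \xi^{(p-r+1)}(z)$.

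Both sides of \eqref{eq42} are linear differential operators, so they agree if and only if they yield the same result on an arbitrary smooth test function. I would therefore apply both sides to a function of the form $g(\xi(z))$, where $g$ is an arbitrary smooth function of a single variable. The left-hand side becomes $\d^p g(\xi(z))/\d z^p$, to which the classical Fa\`a di Bruno formula in the partial Bell polynomial form (see, e.g., \cite[\href{http://dlmf.nist.gov/26.1}{\S 26.1}]{DLMF}) gives
\[
\frac{\d^p}{\d z^p} g(\xi(z)) = \sum_{r=1}^p \B_{p,r}\bigl(\xi'(z), \xi''(z), \ldots, \xi^{(p-r+1)}(z)\bigr) g^{(r)}(\xi),
\]
while the right-hand side, when acting on $g(\xi(z))$, produces exactly the same sum once the arguments of $\B_{p,r}$ are identified via the identity displayed above. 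Since $g$ is arbitrary, the operator identity \eqref{eq42} follows.

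For a self-contained argument, I would alternatively proceed by induction on $p$. The case $p=1$ is the chain rule, $\d/\d z = f_{0,0}^{1/n}(z)\, \d/\d\xi$, together with $\B_{1,1}(x_1) = x_1$. For the inductive step, I would differentiate the hypothesis in $z$ and combine the product-rule terms using two facts: when acting on functions of $\xi(z)$,
\[
\frac{\d}{\d z} \circ \frac{\d^r}{\d\xi^r} = f_{0,0}^{1/n}(z) \frac{\d^{r+1}}{\d\xi^{r+1}},
\]
and the standard recursion for partial Bell polynomials,
\[
\B_{p+1,r}(x_1, \ldots, x_{p-r+2}) = \sum_{k=0}^{p-r+1} \binom{p}{k} x_{k+1}\, \B_{p-k, r-1}(x_1, \ldots, x_{p-k-r+2}),
\]
which is precisely the relation that emerges on re-indexing the differentiated sum. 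The main obstacle is purely bookkeeping: one must carefully track the shift $\xi^{(k)}(z) = \d^{k-1} f_{0,0}^{1/n}(z)/\d z^{k-1}$ so that the arguments of $\B_{p,r}$ on both sides match after each application of the Leibniz rule. No analytic difficulty arises; the proof reduces to a well-documented identity in the algebra of formal differential operators.
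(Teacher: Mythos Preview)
Your proposal is correct. Your alternative inductive argument is precisely the route the paper takes: it proves the base case via the chain rule and $\d\xi/\d z=f_{0,0}^{1/n}(z)$, then differentiates the induction hypothesis, splits via the product rule, and recombines using the Bell polynomial identities \eqref{Bdiff} and \eqref{Brec1} to obtain $\B_{p+1,r}$.

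Your primary approach, appealing directly to the classical Fa\`a di Bruno formula for $\d^p g(\xi(z))/\d z^p$ and noting that equality on test functions $g(\xi(z))$ suffices because $z\mapsto\xi$ is locally biholomorphic on $\mathbf{D}$, is a legitimate shortcut the paper does not take. It trades a self-contained computation for a citation; the paper instead re-derives the formula from the Bell polynomial recursions it has already collected in Appendix~\ref{Bell}. Either route is fine, and your identification $\xi^{(k)}(z)=\d^{k-1}f_{0,0}^{1/n}(z)/\d z^{k-1}$ is exactly what makes the arguments of $\B_{p,r}$ line up. One minor correction: the DLMF reference for Fa\`a di Bruno in Bell polynomial form is not \S26.1; you would want a combinatorics reference such as Comtet \cite[Ch.~III]{Comtet1974}, which the paper already cites.
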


\begin{proof} We proceed by induction on $p$. The base case, when $p=1$, is established through the chain rule and the equality
\[
\frac{\d \xi}{\d z}=f_{0,0}^{1/n} (z).
\]
This equality is a direct consequence of \eqref{eq2}. Suppose the identity \eqref{eq42} holds for a given positive integer $p$. Using both the induction hypothesis and the base case, and applying the identity \eqref{Bdiff} and the recurrence relation \eqref{Brec1}, we deduce
\begin{align*}
\frac{\d ^{p + 1}}{\d z^{p + 1}} & =\frac{\d}{\d z}\frac{\d ^p }{\d z^p }= \sum\limits_{r = 1}^p \frac{\d \B_{p,r}}{{\d z}} \frac{{\d ^r }}{{\d \xi ^r }}  + \sum\limits_{r = 1}^p {f_{0,0}^{1/n} (z)\B_{p,r} \frac{{\d ^{r + 1} }}{{\d \xi ^{r + 1} }}}
\\ & = \sum\limits_{r = 1}^p {\bigg( {\sum\limits_{q = 1}^{p - r + 1} {\binom{p}{q}\frac{{\d ^q f_{0,0}^{1/n} (z)}}{{\d z^q }}\B_{p - q,r - 1} } } \bigg)\frac{{\d ^r }}{{\d \xi ^r }}}  + \sum\limits_{r = 1}^{p + 1} {f_{0,0}^{1/n} (z)\B_{p,r - 1} \frac{{\d ^r }}{{\d \xi ^r }}} 
\\ & = f_{0,0}^{1/n} (z)\B_{p,p} \frac{{\d ^{p + 1} }}{{\d \xi ^{p + 1} }} + \sum\limits_{r = 1}^p {\bigg( f_{0,0}^{1/n} (z)\B_{p,r - 1}  + \sum\limits_{q = 1}^{p - r + 1} {\binom{p}{q}\frac{{\d ^q f_{0,0}^{1/n} (z)}}{{\d z^q }}\B_{p - q,r - 1} } \bigg)\frac{{\d ^r }}{{\d \xi ^r }}} 
\\ & = \B_{p + 1,p + 1} \frac{{\d ^{p + 1} }}{{\d \xi ^{p + 1} }} + \sum\limits_{r = 1}^p {\bigg( \sum\limits_{q = 0}^{p - r + 1} {\binom{p}{q}\frac{{\d ^q f_{0,0}^{1/n} (z)}}{{\d z^q }}\B_{p - q,r - 1} }  \bigg)\frac{{\d ^r }}{{\d \xi ^r }}} 
\\ & = \B_{p + 1,p + 1} \frac{{\d ^{p + 1} }}{{\d \xi ^{p + 1} }} + \sum\limits_{r = 1}^p {\bigg( \sum\limits_{q = 1}^{p + 1 - r + 1} {\binom{p+1-1}{q-1}\frac{{\d ^{q - 1} f_{0,0}^{1/n} (z)}}{{\d z^{q - 1} }}\B_{p + 1 - q,r - 1} }  \bigg)\frac{{\d ^r }}{{\d \xi ^r }}} 
\\ & = \B_{p + 1,p + 1} \frac{{\d ^{p + 1} }}{{\d \xi ^{p + 1} }} + \sum\limits_{r = 1}^p {\B_{p + 1,r} \frac{{\d ^r }}{{\d \xi ^r }}}  = \sum\limits_{r = 1}^{p + 1} {\B_{p + 1,r} \frac{{\d ^r }}{{\d \xi ^r }}} .
\end{align*}
This concludes the induction step.
\end{proof}
Upon differentiating the equality $w(u,z)= f_{0,0}^{(1/n - 1)/2} (z)W(u,\xi )$ $k$ times with respect to $z$ and applying Lemma \ref{lemma8}, we obtain
\begin{align*}
\frac{\d^k w(u,z)}{\d z^k } & = \sum\limits_{p = 0}^k \binom{k}{p}\frac{\d^{k - p} f_{0,0}^{(1/n - 1)/2} (z)}{\d z^{k - p} }\frac{\d^p W(u,\xi )}{\d z^p } \\ & = \frac{\d^k f_{0,0}^{(1/n - 1)/2} (z)}{\d z^k } W(u,\xi)+\sum\limits_{p = 1}^k \binom{k}{p}\frac{\d^{k - p} f_{0,0}^{(1/n - 1)/2} (z)}{\d z^{k - p} }\sum_{r=1}^p \B_{p,r}\frac{\d^p W(u,\xi )}{\d \xi^p }.
\end{align*}
By substituting into \eqref{eq1} and re-arranging the expression, we derive the equation \eqref{eq3} with
\begin{gather}\label{eq45}
\begin{split}
\psi _k (u,\xi ) = & - \frac{{u^{k - n} }}{{f_{0,0}^{(1 + 1/n)/2} (z)}}\sum\limits_{p = k}^n {\binom{n}{p}\frac{{\d^{n - p} f_{0,0}^{(1/n - 1)/2} (z)}}{{\d z^{n - p} }}} \B_{p,k} \\ & + \frac{1}{{f_{0,0}^{(1 + 1/n)/2} (z)}}\sum\limits_{p = 0}^{n - 2} {u^{k - p} f_p (u,z)\sum\limits_{r = k}^p {\binom{p}{r}\frac{{\d^{p - r} f_{0,0}^{(1/n - 1)/2} (z)}}{{\d z^{p - r} }}\B_{r,k} } } .
\end{split}
\end{gather}
Substituting the expansions \eqref{eq23} into this expression and re-arranging the resulting equation in descending powers of $u$, we derive the following formulae for the coefficients $\psi_{k,m}(\xi)$. For $k=0$, we obtain
\[
\psi _{0,1} (\xi) = \frac{{f_{0,1} (z)}}{{f_{0,0} (z)}},
\]
\[
\psi _{0,n} (\xi ) = \frac{{f_{0,n} (z)}}{{f_{0,0} (z)}} - \frac{1}{f_{0,0}^{(1 + 1/n)/2} (z)}\bigg(\frac{{\d^n f_{0,0}^{(1/n - 1)/2} (z)}}{{\d z^n }} - \sum\limits_{p = 1}^{n - 2} {f_{p,n - p} (z)\frac{{\d ^p f_{0,0}^{(1/n - 1)/2} (z)}}{{\d z^p }}}\bigg),
\]
and
\[
\psi _{0,m} (\xi ) = 
\frac{{f_{0,m} (z)}}{{f_{0,0} (z)}} + \frac{1}{f_{0,0}^{(1 + 1/n)/2} (z)}\sum\limits_{p = 1}^{\min (n - 2,m - 1)} {f_{p,m - p} (z)\frac{{\d^p f_{0,0}^{(1/n - 1)/2} (z)}}{{\d z^p }}}
\]
for $n \ne m \ge 2$. For $1\le k\le n-2$, we find
\begin{align*} 
\psi _{k,n - k} (\xi ) = \; & \frac{1}{f_{0,0}^{(1 + 1/n)/2} (z)}\sum\limits_{p = 0}^{n - k - 2} {f_{k + p,n - k - p} (z)\sum\limits_{r = 0}^p {\binom{k+p}{k+r}\frac{{\d^{p - r} f_{0,0}^{(1/n - 1)/2} (z)}}{{\d z^{p - r} }}} \B_{k + r,k} } \\ & - \frac{1}{f_{0,0}^{(1 + 1/n)/2} (z)}\sum\limits_{r = k}^n {\binom{n}{r}\frac{{\d^{n - r} f_{0,0}^{(1/n - 1)/2} (z)}}{{\d z^{n - r} }}\B_{r,k} } ,
\end{align*}
and
\[
\psi _{k,m} (\xi ) = \frac{1}{f_{0,0}^{(1 + 1/n)/2} (z)}\sum\limits_{p = 0}^{\min (n - k - 2,m - 1)} {f_{k + p,m - p} (z)\sum\limits_{r = 0}^p {\binom{k+p}{k+r}\frac{{\d^{p - r} f_{0,0}^{(1/n - 1)/2} (z)}}{{\d z^{p - r} }}} \B_{k + r,k} } 
\]
for $n - k \ne m \ge 1$.

\section{Recurrence relation for the coefficients}\label{coeffappendix}

In this appendix, we outline the derivation of the recurrence relation \eqref{eq6} satisfied by the coefficients $\mathsf{A}_{j,m} (\xi )$. Letting $\mathsf{A}_{j,0} (\xi ) = 1$ and $\B_r  = \B_r  ( X'_j (\xi ),X''_j (\xi ), \ldots ,X_j^{(r)} (\xi ) )$, we employ \eqref{Bprop} to obtain the following expansions for the derivatives of the formal solutions \eqref{eq4}:
\begin{align*}
& \exp \Big( { - \e^{2\pi \im j/n} u\xi  - X_j (\xi )} \Big)\frac{{\d^k }}{{\d\xi ^k }}\bigg( {\exp \Big( {\e^{2\pi \im j/n} u\xi  + X_j (\xi )} \Big)\sum\limits_{m = 0}^\infty  {\frac{{\mathsf{A}_{j,m} (\xi )}}{{u^m }}} } \bigg)
\\ &
 = \sum\limits_{p = 0}^k {\binom{k}{p}\bigg( {\sum\limits_{r = 0}^p {\binom{p}{r}\e^{2\pi \im j(p - r)/n} u^{p - r} \B_r } } \bigg)\bigg( {\sum\limits_{m = 0}^\infty  {\frac{{\d^{k - p} \mathsf{A}_{j,m} (\xi )}}{{\d\xi ^{k - p} }}\frac{1}{{u^m }}} } \bigg)} 
\\ &
 = \sum\limits_{m = 0}^\infty  {\bigg( {\sum\limits_{p = 0}^k {\e^{2\pi \im jp/n} \bigg( {\sum\limits_{r = p}^k {\binom{k}{r}\binom{r}{r-p}\B_{r - p} \frac{{\d^{k - r} \mathsf{A}_{j,m} (\xi )}}{{\d\xi ^{k - r} }}} } \bigg)u^p } } \bigg)\frac{1}{{u^m }}} 
\\ &
 = u^k \sum\limits_{m = 0}^\infty  {\sum\limits_{p = 0}^k {\e^{2\pi \im j(k - p)/n} \sum\limits_{r = 0}^p {\binom{k}{r}\binom{k-r}{p-r}\B_{p - r} \frac{{\d^r \mathsf{A}_{j,m} (\xi )}}{{\d\xi ^r }}} \frac{1}{{u^{m + p} }}} } 
\\ & = u^k \sum\limits_{m = 0}^\infty  {\bigg( {\sum\limits_{p = 0}^{\min(m,k)} {\e^{2\pi \im j(k - p)/n} \sum\limits_{r = 0}^{p} {\binom{k}{r}\binom{k-r}{p-r}\B_{p - r} \frac{{\d^r \mathsf{A}_{j,m - p} (\xi )}}{{\d\xi ^r }}} } } \bigg)\frac{1}{{u^m }}} 
\\ & = u^k \sum\limits_{m = 0}^\infty  {\bigg( {\sum\limits_{p = 0}^{\min(m,k)} {\binom{k}{p}\e^{2\pi \im j(k - p)/n} \sum\limits_{r = 0}^{p} {\binom{p}{r}\B_{p - r} \frac{{\d^r \mathsf{A}_{j,m - p} (\xi )}}{{\d\xi ^r }}} } } \bigg)\frac{1}{{u^m }}} .
\end{align*}
Substituting both these expansions and those in \eqref{eq15} into equation \eqref{eq3}, performing the products of the expansions, and equating coefficients of like powers of $u$, we obtain
\begin{multline*}
\sum\limits_{p = 1}^{\min(n,m)} {\binom{n}{p}\e^{2\pi \im j(n - p)/n} \sum\limits_{r = 0}^{p} {\binom{p}{r}\B_{p - r} \frac{{\d^r \mathsf{A}_{j,m - p} (\xi )}}{{\d\xi ^r }}} }  \\ = \sum\limits_{k = 0}^{n - 2} {\sum\limits_{q = 0}^{m - 1} {\sum\limits_{p = 0}^{\min(k,q)} {\binom{k}{p}\e^{2\pi \im j(k - p)/n} \sum\limits_{r = 0}^{p} {\binom{p}{r}\B_{p - r} \psi _{k,m - q} (\xi )\frac{{\d^r \mathsf{A}_{j,q - p} (\xi )}}{{\d\xi ^r }}} } } } .
\end{multline*}
The terms involving $\mathsf{A}_{j,m} (\xi )$ cancel each other out. By solving for $\partial_\xi \mathsf{A}_{j,m-1} (\xi )$, integrating both sides, and substituting $m$ with $m+1$ throughout, we arrive at the desired recurrence relation \eqref{eq6} for $m\ge 1$.

\section{Bell polynomials}\label{Bell}

Due to the frequent occurrences of the Bell polynomials in this paper, we have compiled in this appendix their properties that are used throughout the paper. The partial exponential Bell polynomials $\B_{p,r}$ are a triangular array of polynomials defined by the generating function
\[
\frac{1}{r!}\bigg( \sum\limits_{p = 1}^\infty  x_p \frac{z^p }{p!}  \bigg)^r  = \sum\limits_{p = r}^\infty \B_{p,r} (x_1 ,x_2 , \ldots ,x_{p - r + 1} )\frac{z^p }{p!},\quad r\ge 0.
\]
They can be computed efficiently by a recurrence relation:
\begin{equation}\label{Brec1}
\B_{p + 1,r + 1} (x_1 ,x_2 , \ldots ,x_{p - r + 1} ) = \sum\limits_{q = 0}^{p - r} \binom{p}{q}x_{q + 1} \B_{p - q,r} (x_1 ,x_2 , \ldots ,x_{p - r - q + 1} ),
\end{equation}
where $\B_{0,0}=1$, $\B_{p,0}=0$ for $p\ge 1$, and $\B_{0,r}=0$ for $r\ge 1$. The partial exponential Bell polynomials satisfy the following homogeneity relation:
\begin{equation}\label{eq46}
\B_{p,r} (\alpha \beta x_1 ,\alpha \beta ^2 x_2 , \ldots ,\alpha \beta ^{p - r + 1} x_{p - r + 1} ) = \alpha ^r \beta ^p \B_{p,r} (x_1 ,x_2 , \ldots ,x_{p - r + 1} ).
\end{equation}
By applying the chain rule, one can derive the following identity:
\begin{equation}\label{Bdiff}
\frac{\d}{\d z}\B_{p,r}(x_1(z),x_2(z), \ldots, x_{p-r+1}(z)) = \sum_{q=1}^{p-r+1} \binom{p}{q} x_q'(z) \B_{p-q,r-1}(x_1(z), x_2(z),\ldots, x_{p-q-r+2}(z)).
\end{equation}

The sum
\[
\B_p(x_1 ,x_2 , \ldots ,x_p)=\sum_{r=0}^p \B_{p,r} (x_1 ,x_2 , \ldots ,x_{p - r + 1} )
\]
is called the $p^{\text{th}}$ complete exponential Bell polynomial. These polynomials can be computed recursively via
\begin{equation}\label{Brec2}
\B_{p + 1} (x_1 ,x_2 , \ldots ,x_{p + 1} ) = \sum\limits_{q = 0}^p \binom{p}{q}x_{q + 1} \B_{p - q} (x_1 ,x_2 , \ldots ,x_{p - q} ),
\end{equation}
with the initial value $\B_0=1$. The exponential Bell polynomials play a role in the celebrated Fa\`{a} di Bruno formula, which generalises the chain rule to higher derivatives. In particular, 
\begin{equation}\label{Bprop}
\frac{\d^p \exp (x(z))}{\d z^p} = \exp (x(z))\B_p (x'(z),x''(z), \ldots ,x^{(p)} (z)).
\end{equation}
For further properties of the Bell polynomials, we refer to \cite[Ch. III]{Comtet1974}.

\section{The sequence $\mathfrak{a}_n$}\label{aseq}

In this appendix, we establish certain properties of the sequence $\mathfrak{a}_n$ with $n\geq 2$, where the $n^{\text{th}}$ term is defined as the unique positive solution of the equation
\[
(1+\mathfrak{a}_n)^n=1+2n\mathfrak{a}_n.
\]
The verification that this equation possesses a distinct positive solution for each $n\ge 2$ is left to the reader.

\begin{proposition}\label{athm} Let $a=1.2564312086\ldots$ be the unique positive solution of the equation $\e^a = 1+2a$. Then for any integer $n\ge 2$, the double inequality
\begin{equation}\label{athm1}
\frac{a}{n} \le \mathfrak{a}_n \le \frac{2}{n-1}
\end{equation}
holds. Furthermore, the sequence $\mathfrak{a}_n$ satisfies the asymptotic limit
\begin{equation}\label{athm2}
\mathfrak{a}_n \sim \frac{a}{n},
\end{equation}
as $n\to+\infty$.
\end{proposition}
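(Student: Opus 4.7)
The natural object to study is $g_n(x) = (1+x)^n - 1 - 2nx$, whose unique positive root is $\mathfrak{a}_n$. The plan is to first work out the shape of $g_n$: since $g_n(0) = 0$ and $g_n'(x) = n((1+x)^{n-1} - 2)$, the function decreases on $[0, x_0]$ with $x_0 := 2^{1/(n-1)} - 1$ and increases on $[x_0, +\infty)$, so $\mathfrak{a}_n$ is the unique point in $(x_0, +\infty)$ at which $g_n$ vanishes. On this half-line the sign of $g_n(x)$ determines whether $x \lessgtr \mathfrak{a}_n$, reducing both bounds in \eqref{athm1} to a sign test at the candidate endpoints.

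For the upper bound, I would expand
\[
\Big(1+\frac{2}{n-1}\Big)^n = \sum_{k=0}^n \binom{n}{k}\Big(\frac{2}{n-1}\Big)^k
\]
and observe that the $k=0,1,2$ contributions already sum to exactly $1 + 4n/(n-1) = 1 + 2n\cdot 2/(n-1)$, while every remaining term is non-negative; this immediately gives $g_n(2/(n-1)) \ge 0$. Combined with a short check that $2/(n-1) > x_0$ for every $n \ge 2$ (equivalent to $2^{1/(n-1)} \le 1 + 2/(n-1)$, which follows by convexity of $y \mapsto 2^y$), the monotonicity of $g_n$ on $[x_0, +\infty)$ then yields $\mathfrak{a}_n \le 2/(n-1)$. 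For the lower bound I would use the classical inequality $(1+a/n)^n < \e^a$ together with the defining relation $\e^a = 1+2a$ to conclude $g_n(a/n) < 0$, and hence $\mathfrak{a}_n > a/n$.

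For the asymptotic \eqref{athm2}, set $b_n := n\mathfrak{a}_n$, so that by \eqref{athm1} the sequence $(b_n)_{n\ge 2}$ satisfies $a \le b_n \le 2n/(n-1)$, and the defining equation becomes $(1+b_n/n)^n = 1 + 2b_n$. For any convergent subsequence $b_{n_k} \to b^*$, passing to the limit in this identity yields $\e^{b^*} = 1 + 2 b^*$; since $b^* \ge a > 0$ and $a$ is the unique positive solution of $\e^x = 1+2x$, we must have $b^* = a$. Every subsequential limit of the bounded sequence $(b_n)$ thus equals $a$, so $b_n \to a$, which is precisely $\mathfrak{a}_n \sim a/n$.

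I do not expect any serious obstacles. The only mildly delicate point is verifying that the test value $2/(n-1)$ lies to the right of the minimum $x_0$, so that a non-negative value of $g_n$ there actually translates into an upper bound for $\mathfrak{a}_n$; but this is a one-line convexity check. Everything else is binomial/Bernoulli manipulation together with a compactness-plus-uniqueness argument for the asymptotic.
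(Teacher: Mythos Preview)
Your proof is correct. For the double inequality you and the paper use the same underlying ingredients (binomial truncation for the upper bound, the inequality $(1+x)^n\le\e^{nx}$ for the lower bound), but you organise them as sign tests of $g_n$ at the candidate endpoints, which forces the extra check $2/(n-1)>x_0$; the paper instead applies both inequalities directly at $x=\mathfrak{a}_n$ and solves, avoiding that check. For the asymptotic, your route is genuinely different and arguably cleaner: you use boundedness plus a subsequence argument, invoking that the limit $(1+b_{n_k}/n_k)^{n_k}\to\e^{b^*}$ forces $b^*=a$ by uniqueness; the paper instead shows $\big((1+\mathfrak{a}_n)\e^{-\mathfrak{a}_n}\big)^n\to1$, deduces $\limsup(1+\mathfrak{a}_n)^n=\exp\big(\limsup n\mathfrak{a}_n\big)$, and then bounds the $\limsup$ by $a$ via monotonicity of $x\mapsto(\e^x-1)/x$.
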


Here, we remark that the constant $a$ can be represented using particular branches of the Lambert $W$-function (see, for instance, \cite[\href{https://dlmf.nist.gov/4.13}{\S4.13}]{DLMF}):
\[
a =  - \tfrac{1}{2} - W_{\pm 1}\big(  - \tfrac{1}{{2\sqrt \e }} \mp 0\im\big).
\]

\begin{proof} First, we note that $1+\mathfrak{a}_n\le \e^{\mathfrak{a}_n}$. Consequently,
\[
1 + 2n\mathfrak{a}_n  \le \e^{n\mathfrak{a}_n },
\]
meaning
\[
2 \le \frac{{\e^{n\mathfrak{a}_n }  - 1}}{{n\mathfrak{a}_n }}.
\]
Since the function $x\mapsto (\e^x-1)/x$ is monotonically increasing for $x> 0$, the above inequality implies $a\le n\mathfrak{a}_n$. On the other hand, employing the binomial theorem yields
\[
(1 + \mathfrak{a}_n )^n \ge 1 + n\mathfrak{a}_n  + \frac{{n(n - 1)}}{2}\mathfrak{a}_n^2
\]
for any $n\ge 2$. Consequently,
\[
1 + n\mathfrak{a}_n  + \frac{{n(n - 1)}}{2}\mathfrak{a}_n^2  \le 1 + 2n\mathfrak{a}_n 
\]
which can be simplified to
\[
\mathfrak{a}_n  \le \frac{2}{{n - 1}}.
\]
To establish the asymptotic equality \eqref{athm2}, we make note of the fact that $(1 + x)\e^{-x} = 1 + \mathcal{O}(x^2)$ as $x \to 0$. Combining this with the double inequality \eqref{athm1}, we deduce that
\[
\lim\limits_{n \to  + \infty } ((1 + \mathfrak{a}_n )\e^{ - \mathfrak{a}_n } )^n  = 1.
\]
As the limit is positive and the exponential function exhibits monotonic growth, we ascertain that
\[
\limsup\limits_{n \to  + \infty } (1 + \mathfrak{a}_n )^n  = \limsup\limits_{n \to  + \infty } \e^{n\mathfrak{a}_n } \lim\limits_{n \to  + \infty } ((1 + \mathfrak{a}_n )\e^{ - \mathfrak{a}_n } )^n  = \exp \Big(\limsup\limits_{n \to  + \infty } (n\mathfrak{a}_n )\Big).
\]
Therefore,
\[
\exp \Big( \limsup\limits_{n \to  + \infty } (n\mathfrak{a}_n)  \Big) \le 1 + 2\limsup\limits_{n \to  + \infty } (n\mathfrak{a}_n ),
\]
which, given that $a \le \limsup_{n\to+\infty}(n\mathfrak{a}_n) \le 2$ as inferred from \eqref{athm1}, implies
\[ \frac{\exp \Big( \limsup\limits_{n \to  + \infty } (n\mathfrak{a}_n)  \Big)-1}{\limsup\limits_{n \to  + \infty } (n\mathfrak{a}_n )}\le 2.
\]
Using once more the monotonous nature of the function $x\mapsto (\e^x-1)/x$, we deduce
\[
\limsup\limits_{n \to  + \infty } (n\mathfrak{a}_n ) \le a,
\]
and with reference to \eqref{athm1}, this results in the desired limit
\[
\lim\limits_{n \to  + \infty } (n\mathfrak{a}_n ) = a.
\]
\end{proof}


\begin{thebibliography}{10}

\bibitem{Aoki1991}
T.~Aoki, T.~Kawai, Y.~Takei, The Bender--Wu analysis and the Voros theory, in M.~Kashiwara, T.~Miwa (eds.), \emph{Special Functions}, ICM-90 Satellite Conference Proceedings, Springer, Tokyo, 1991.

\bibitem{Aoki1994}
T.~Aoki, T.~Kawai, Y.~Takei, New turning points in the exact WKB analysis for higher-order ordinary differential equations,
in L.~B.~de Montvel (ed.), \emph{Analyse Alg\'ebrique des Perturbations Singuli\`eres I}, Travaux en Cours, Vol. 47. Hermann, Paris, 1994.

\bibitem{Aoki2001}
T.~Aoki, T.~Kawai, Y.~Takei, On the exact steepest descent method: A new method for the description of Stokes curves, \emph{J. Math. Phys.} \textbf{42} (2001), no. 8, pp. 3691--3713.

\bibitem{Bender1969}
C.~M.~Bender, T.~T.~Wu, Anharmonic oscillator, \emph{Phys. Rev.} \textbf{184} (1969), no. 5, pp. 1231--1260.

\bibitem{Bennett2018}
T.~B.~Bennett, C.~J.~Howls, G.~Nemes, A.~B.~Olde Daalhuis, Globally exact asymptotics for integrals with arbitrary order saddles, \emph{SIAM J. Math. Anal.} \textbf{50} (2018), no. 2, pp. 2144--2177.

\bibitem{Berk1982}
H.~L.~Berk, W.~M.~Nevins, K.~V.~Roberts, New Stokes' line in WKB theory, \emph{J. Math. Phys.} \textbf{23} (1982), no 6, pp. 988--1002.

\bibitem{Bodine2002}
S.~Bodine, R.~Sch\"afke, On the summability of formal solutions in Liouville--Green theory, \emph{J. Dyn. Control Syst.} \textbf{8} (2002), no. 3, pp. 371--398.

\bibitem{Brillouin1926}
L.~Brillouin, La m\'ecanique ondulatoire de Schr\"odinger: une m\'ethode g\'en\'erale de resolution par approximations successives, \emph{Comptes Rendus} \textbf{183} (1926), pp. 24--26.

\bibitem{Carlini1817}
F.~Carlini, \emph{Ricerche sulla Convergenza della Serie che Serva alla Soluzione del Problema di Keplero}, Milan, 1817.

\bibitem{Howls2007}
S.~J.~Chapman, C.~J.~Howls, J.~R.~King, A.~B.~Olde Daalhuis, Why is a shock not a caustic? The higher-order Stokes phenomenon and smoothed shock formation, \emph{Nonlinearity} \textbf{20} (2007), no. 10, pp. 2425--2452.

\bibitem{Cherry1950}
T.~M.~Cherry, Uniform asymptotic formulae for functions with transition points, \emph{Trans. Amer. Math. Soc.} \textbf{68} (1950), no. 2, pp. 224--257.

\bibitem{Comtet1974}
L.~Comtet, \emph{Advanced Combinatorics}, Reidel, Dordrecht--Holland, 1974.

\bibitem{Delabaere1997}
E.~Delabaere, H.~Dillinger, F.~ Pham, Exact semiclassical expansions for one-dimensional quantum oscillators, \emph{J. Math. Phys.} \textbf{38} (1997), no. 12, pp. 6126--6184.

\bibitem{Delabaere1999}
E.~Delabaere, F.~Pham, Resurgent methods in semi-classical asymptotics, \emph{Ann. Inst. H. Poincaré Phys. Théor.} \textbf{71} (1999), no. 1, pp. 1--94.

\bibitem{Dingle1973}
R.~B.~Dingle, \emph{Asymptotic Expansions: Their Derivation and Interpretation}, Academic Press, London--New York, 1973.

\bibitem{Dunster1993}
T.~M.~Dunster, D.~A.~Lutz, R.~Sch\"afke, Convergent Liouville--Green expansions for second-order linear differential equations, with an application to Bessel functions, \emph{Proc. R. Soc. Lond. A} \textbf{440} (1993), no. 1908, pp. 37--54.

\bibitem{Dunster2001}
T.~M.~Dunster, Convergent expansions for solutions of linear ordinary differential equations having a simple
turning point, with an application to Bessel functions, \emph{Stud. Appl. Math.} \textbf{107} (2001), no 3, pp. 293--323.

\bibitem{Dunster2004}
T.~M.~Dunster, Convergent expansions for solutions of linear ordinary differential equations having a simple
pole, with an application to associated Legendre functions, \emph{Stud. Appl. Math.} \textbf{113} (2004), no. 3, pp. 245--270.

\bibitem{Fedoryuk1993}
M.~V.~Fedoryuk, \emph{Asymptotic Analysis.  Linear Ordinary Differential Equations}, Springer-Verlag, Berlin, 1993.

\bibitem{Giller2000}
S.~Giller, Topological expansion and exponential asymptotics in 1D quantum mechanics, \emph{J. Phys. A Math. Gen.} \textbf{33} (2000), no. 8, pp. 1543--1580.

\bibitem{Giller2001}
S.~Giller, P.~Milczarski, Borel summable solutions to one-dimensional Schr\"odinger equation, \emph{J. Math. Phys.} \textbf{42} (2001), no 2, pp. 608--640.

\bibitem{Green1837}
G.~Green, On the motion of waves in a variable canal of small depth and width, \emph{Trans. Cambridge Philos. Soc.} \textbf{6} (1837), pp. 457--462.

\bibitem{Howls2004}
 C.~J.~Howls, P.~J.~Langman, A.~B.~Olde Daalhuis, On the higher-order Stokes phenomenon, \emph{Proc. Roy. Soc. London Ser. A} \textbf{460} (2004), no. 2048, pp. 2285--2303.

\bibitem{Jeffreys1924}
H.~Jeffreys, On certain approximate solutions of linear differential equations of the second order, \emph{Proc. London Math. Soc.} \textbf{23} (1924), pp. 428--436. 

\bibitem{Kamimoto2011}
S. Kamimoto, T. Koike, On the borel summability of WKB-theoretic transformation series, preprint
(RIMS-1726), 2011.

\bibitem{Kramers1926}
H.~A.~Kramers, Wellenmechanik und halbzahlige Quantisierung, \emph{Z. Phys.} \textbf{39} (1926), no. 10--11, pp. 828--840.

\bibitem{Liouville1837}
J.~Liouville, Sur le d\'eveloppement des fonctions et s\'eries, \emph{J. Math. Pures Appl.} \textbf{1} (1837), pp. 16--37.

\bibitem{Mitschi2016}
C.~Mitschi, D.~Sauzin, \emph{Divergent series, Summability and Resurgence I, Monodromy and Resurgence}, Lecture Notes in Mathematics, Vol. 2153. Springer, 2016.

\bibitem{Nemes2021}
G.~Nemes, On the Borel summability of WKB solutions of certain Schr\"odinger-type differential equations, \emph{J. Approx. Theory} \textbf{265} (2021), Article 105562, pp. 30.

\bibitem{Nikolaev2023}
N.~Nikolaev, Existence and uniqueness of exact WKB solutions for second-order singularly perturbed linear ODEs, \emph{Commun. Math. Phys.} \textbf{400} (2023), no. 1, pp. 463--517.

\bibitem{Ohyama1995}
Y.~Ohyama, Connection formula for Airy-type equations (Painlev\'e Transcendents and Asymptotic Analysis), \emph{Kyoto Univ. Res. Inf. Reposit.} \textbf{931} (1995), pp. 1--19.

\bibitem{OldeDaalhuis2003}
A.~B.~Olde Daalhuis, Exponential asymptotics, in E.~Koelink, W.~Van Assche (eds.), \emph{Orthogonal Polynomials and Special Functions}, Lecture Notes in Mathematics, Vol. 1817. Springer, Berlin, Heidelberg, 2003.

\bibitem{Olver1997}
F.~W.~J.~Olver, \emph{Asymptotics and Special Functions}, AKP Classics, A K Peters Ltd., Wellesley, MA, 1997. Reprint of the 1974 original, published by Academic Press, New York.

\bibitem{DLMF} \emph{NIST Digital Library of Mathematical Functions}. \url{https://dlmf.nist.gov/}, Release 1.2.1 of 2024-06-15. F.~W.~J.~Olver, A.~B.~Olde Daalhuis, D.~W.~Lozier, B.~I.~Schneider, R.~F.~Boisvert, C.~W.~Clark, B.~R.~Miller, B.~V.~Saunders, H.~S.~Cohl, and M.~A.~McClain, eds.

\bibitem{Sasaki}
S. Sasaki, Borel summability of WKB-theoretic transformation
near a double turning point, \emph{RIMS K\^oky\^uroku Bessatsu}, to appear.

\bibitem{Sibuya2000}
Y.~Sibuya, The Gevrey asymptotics in the case of singular perturbations, \emph{J. Differ. Equ.} \textbf{165} (2000), no. 2, pp. 255--314.

\bibitem{Rayleigh1912}
J.~W.~Strutt, On the propagation of waves through a stratified medium, with special reference to the question of reflection, \emph{Proc. R. Soc. Lond. A} \textbf{86} (1912), no. 586, pp. 207--226.

\bibitem{Takei2017}
Y.~Takei, WKB analysis and Stokes geometry of differential equations, in G.~Filipuk, Y.~Haraoka, S.~Michalik (eds.), \emph{Analytic, Algebraic and Geometric Aspects of Differential Equations}, Trends in Mathematics, Birkh\"auser, Cham, 2017.

\bibitem{Thorne1960}
R.~Thorne, Asymptotic formulae for solutions of linear second-order differential equations with a large parameter, \emph{J. Aust. Math.} \textbf{1} (1960), no. 4, pp. 439--464.

\bibitem{Voros1983}
A.~Voros, The return of the quartic oscillator. The complex WKB method, \emph{Ann. Inst. H. Poincar\'e Sect. A
(N.S.)} \textbf{39} (1983), no. 3, pp. 211--338.

\bibitem{Wasow1985}
W.~Wasow, \emph{Linear Turning Point Theory}, Springer, New York, 1985.

\bibitem{Wentzel1926}
G.~Wentzel, Eine Verallgemeinerung der Quantenbedingungen f\"ur die Zwecke der Wellenmechanik, \emph{Z. Phys.} \textbf{38} (1926), no. 6--7, pp. 518--529.

\end{thebibliography}
\end{document}